\title[Determinant map on C*-algebras]{The kernel of the determinant map on certain simple C*-algebras}
\author{P. W. Ng}
\address{Mathematics Department\\
University of Louisiana at Lafayette\\
217 Maxim D. Doucet Hall\\
P. O. Box 41010\\
Lafayette, Louisiana\\
70504-1010\\
USA}
\email{png@louisiana.edu}
\newtheorem{thm}{Theorem}[section]
\newtheorem{prop}[thm]{Proposition}
\newtheorem{lem}[thm]{Lemma}
\newtheorem{df}{Definition}[section]
\newtheorem{rem}[thm]{Remark}
\newtheorem{cor}[thm]{Corollary}  
\newtheorem*{qu}{Question}
\newcommand{\A}{\mathcal{A}}
\newcommand{\B}{\mathcal{B}}
\newcommand{\C}{\mathcal{C}} 
\newcommand{\M}{\mathbb{M}}
\newcommand{\T}{\mathbb{T}}  
\newcommand{\F}{\mathcal{F}}
\newcommand{\G}{\mathcal{G}}
\newcommand{\I}{\mathcal{I}}
\newcommand{\J}{\mathcal{J}}
\newcommand{\INT}{\mathbb{INT}}
\newcommand{\Mul}{\mathcal{M}}
\newcommand{\tDS}{\widetilde{\Delta}}
\numberwithin{equation}{section}
\subjclass{46L35, 46L05}
\begin{document}

\begin{abstract}
Let $\A$ be a unital separable simple C*-algebra such that 
either
\begin{enumerate}
\item $\A$ has real rank zero, strict comparison and cancellation
of projections; or
\item $\A$ is TAI.  
\end{enumerate}

Let $\Delta_T : GL^0(\A) \rightarrow E_u/ T(K_0(\A))$ be the 
universal determinant of de la Harpe and Skandalis.

Then for all $x \in GL^0(\A)$,
$\Delta_T(x) = 0$ if and only if $x$ is the product of 8 multiplicative
commutators in $GL^0(\A)$.\\
 
We also have results for the unitary case and other cases. 
\end{abstract}

\maketitle

\section{Introduction}

Let $\A$ be a unital C*-algebra and let $x \in \A$. 

\begin{qu}
\makebox{  }
\begin{enumerate}
\item When is $x$ a finite sum of additive commutators? I.e.,
when is $x$ a sum of finitely many
elements of the form $ab - ba$ where $a,b \in \A$?
\item If $x$ is invertible (unitary) when is $x$ a finite product of
multiplicative commutators? I.e., when is $x$ a product of finitely
elements of the form $yzy^{-1} z^{-1}$ where $y, z \in \A$ are 
invertible (resp. unitary) elements?    
\end{enumerate}
\end{qu}

The first question has a long history, is connected to basic 
questions about the structure of C*-algebras, 
and is still a subject matter
of recent papers.  
(E.g., see \cite{Halmos}, \cite{BrownPearcy},
\cite{Halpern}, \cite{FackCommutators} 
\cite{FackDelaHarpe}, 
\cite{DykemaSkripka},  
\cite{MarcouxMurphy}, \cite{MarcouxSurvey}, \cite{CuntzPedersen} 
and the references therein.)  

   In this paper, we focus on the second question.
The first result in this direction is due to 
Brown and Pearcy who proved that every unitary operator on 
a separable infinite dimensional Hilbert space is a multiplicative
commutator of unitaries, i.e., has the form $v w v^* w^*$ where $v, w$ are 
unitary operators on the Hilbert space (\cite{BrownPearcyII}).
This was generalized by M. Broise who proved
that a von Neumann factor $\Mul$ is not of finite type I
if and only if every unitary operator in $\Mul$
is a finite product of multiplicative commutators of unitaries 
(\cite{Broise}).

   In \cite{FackDelaHarpe}, Fack and de la Harpe proved that 
if $\Mul$ is a type $II_1$ factor and $x \in \Mul$ is invertible,
then $x$ has Fuglede--Kadison determinant one if and only if
$x$ is a finite product of multiplicative commutators, i.e.,
a finite product of elements of the form $y z y^{-1} z^{-1}$
where $y, z \in \Mul$ are invertibles.    
(See \cite{FackDelaHarpe} Proposition 2.5.)

   de la Harpe, Skandalis and Thomsen generalized the above results
to classes of C*-algebras that are not necessarily von Neumann algebras.
For a unital C*-algebra $\A$, let $GL^0(\A), U^0(\A)$ denote the
connected component of the identity of the invertible group of $\A$
and the connected component of the identity of the unitary group of
$\A$ respectively; let $DGL^0(\A), DU^0(\A)$ denote the commutator
subgroups of $GL^0(\A)$ and $U^0(\A)$ respectively; and let   
$\Delta_T$ denote the universal determinant of $\A$, introduced
by de la Harpe and Skandalis in \cite{HarpeSkandalisI}.  (More
information about $\Delta_T$ and basic references 
can be found at the
end of this introduction.)
In \cite{HarpeSkandalisII} Theorem 6.6,
de la Harpe and Skandalis proved that if $\A$ is a unital simple
infinite dimensional AF-algebra and
$x \in GL^0(\A)$,  then
$\Delta_T(x) = 0$ if and only if 
$x$ is the product of four multiplicative commutators 
in $GL^0(\A)$. 
They have a similar result for $U^0(\A)$, when $\A$ is simple AF  
(\cite{HarpeSkandalisII} Proposition 6.7).    
Moreover, when $\A$ is simple AF,
$DGL^0(\A)$ and $DU^0(A)$ are both simple modulo their centres 
(\cite{HarpeSkandalisIII}). 
Finally, when $\A$ is a unital simple properly infinite C*-algebra,
both $GL^0(\A)$ and $U^0(\A)$ are perfect groups (\cite{HarpeSkandalisII}
Theorem 7.5 and Propositon 7.7).

In \cite{ThomsenCommutators}, Thomsen generalized 
de la Harpe and Skandalis' results to the class of 
unital C*-algebras $\A$ which have the following properties:
\begin{enumerate}
\item $\A$ is an inductive limit where the
building blocks have the form 
$\M_{n_1}(C(X_1)) \oplus \M_{n_2}(C(X_2)) \oplus ... \oplus 
\M_{n_k}(C(X_k))$ 
such that each $X_j$ is a compact metric space with covering
dimension $dim(X_j) \leq 2$ and $H^2(X_j, \mathbb{Z}) = 0$.
\item $K_0(\A)$ has large denominators.
\end{enumerate}
Henceforth, we will call the above class of C*-algebras ``Thomsen's class".

In \cite{ThomsenCommutators} Theorem 3.4, using fundamental results
in classification theory that Thomsen developed,
it was proven that for a C*-algebra
$\A$ in Thomsen's class, for $x \in GL^0(\A)$ (or $x \in U^0(\A)$),
$\Delta_T(x) = 0$ if and only if $x$ is a finite product of
commutators in $GL^0(\A)$ (respectively in $U^0(\A)$).
We note that these results (unlike the result of, say,
\cite{HarpeSkandalisII} Theorem 6.6 which gives four) 
does not give a bound on 
the number of commutators.  Moreover, the argument itself does
not give such a bound.  Finally, in \cite{ThomsenCommutators} Theorem 4.1
and Theorem 4.3, it was proven that for a C*-algebra $\A$ 
in Thomsen's class, $DGL^0(\A)$ and $DU^0(\A)$ are both simple modulo their
centres.
 
In this paper, we generalize the results of 
\cite{HarpeSkandalisII} and  \cite{ThomsenCommutators} to
the class of simple TAI-algebras and the class of simple unital
C*-algebras with real rank zero, strict comparison and cancellation
of projections.  (The definition of ``TAI-algebra" is in 
Definition \ref{df:TAI}.)  These are large classes of C*-algebras
which have been important in the classification program. (E.g.,
the C*-algebras in \cite{ElliottEvans} and \cite{ElliottGongLi}
belong to these classes.)  These classes also include
the classes in \cite{HarpeSkandalisII} and 
\cite{ThomsenCommutators} (in the simple finite case).  
Our main result is the following:
Let $\A$ be a unital simple separable C*-algebra such that
either (a) $\A$ is TAI or (b) $\A$ has real rank zero, strict
comparison and cancellation of projections.
Let $x \in GL^0(\A)$.
Then $\Delta_T(x) = 0$ if and only if 
$x$ is the product of eight multiplicative commutators in 
$GL^0(\A)$.  (See Theorem \ref{thm:MainTAITh} and Theorem
\ref{thm:MainRR0Th}.)
We note that unlike the (nonetheless interesting)  
results in \cite{ThomsenCommutators},
there is a bound (eight) on the number of commutators. 
It is an open question whether we can reduce the bound. 
We also have results in the unitary case.  
(See also  Theorem \ref{thm:TAIFirstTh}
and Theorem \ref{thm:RR0FirstTh}.) 
 
The arguments in our paper extensively use techniques from
classification theory, including a difficult uniqueness theorem from
the literature  
(Theorem \ref{thm:LinAUE}). 
%(It is interesting that the closed
%form of the de la Harpe--Skandalis determinant have, in the 
%opposite direction,  been  
%useful in the uniqueness theorems of classification theory.)

We end this section by giving some basic references and
fixing 
some notation and definitions which we will use throughout this paper.

A basic reference for the de la Harpe--Skandalis determinant
is \cite{HarpeSkandalisI}.  A good summary can also be found
in \cite{HarpeSurvey}.   A basic reference for TAI algebras
is \cite{LinTAI}.  

We now fix some notation and definitions.  We refer the reader to
the references given above for more details.  
For a unital C*-algebra, $\A$ and for 
$n \in \{ 1, 2, .... \} \cup \{ \infty \}$,
let $U_n(\A), U^0_n(\A), GL_n(\A), GL_n^0(\A)$        
be the unitary group, the connected component of the identity of the unitary
group, the group of invertibles, and connected component of the identity
of the group of invertibles respectively of $\M_n(\A)$. 
Oftentimes, we use $U(\A), U^0(\A), GL(\A), GL^0(\A)$ to abbreviate
$U_1(\A), U^0_1(\A), GL_1(\A), GL^0_1(\A)$ respectively. 
Also, for a group $G$ and for $x, y \in G$, we let
$(x,y)$ denote the multiplicative 
commutator $(x,y) =_{df} x y x^{-1} y^{-1}$.
We let $DG$ denote the commutator
subgroup of $G$, i.e., the subgroup of $G$ generated by the
multiplicative commutators $(x,y)$ where $x,y \in G$. 
(E.g., $D U^0(\A)$ is the commutator
subgroup of $U^0(\A)$.) 

For a Banach space $E$,  a tracial continuous linear function
$\tau : \A \rightarrow E$,
and for a piecewise continuously differentiable curve 
$\xi : [t_0,t_1] \rightarrow GL^0_{\infty}(\A)$, we let 
$\widetilde{\Delta}_{\tau}(\xi) =_{df}
\frac{1}{2\pi i} \int_{t_0}^{t_1} \tau(\xi'(t) \xi(t)^{-1}) dt \in E$
(section 1 of \cite{HarpeSkandalisI}; see also section 6
of \cite{HarpeSurvey}).
By \cite{HarpeSkandalisI} Lemma 1 (c) (also \cite{HarpeSurvey} Lemma 10(iii)),
$\widetilde{\Delta}_{\tau}(\xi)$ depends only on the homotopy class
of $\xi$ (with endpoints fixed).
This (and a form of Bott periodicity) then induces a group homomorphism
$\Delta_{\tau} : GL^0_{\infty} (\A) \rightarrow E / \tau(K_0(\A))$ 
(\cite{HarpeSkandalisI} Proposition 2; also
\cite{HarpeSurvey} Theorem 13).   

Let $E_u$ denote the Banach space quotient of $\A$ 
by the closed linear span of the additive commutators 
$[a,b] =_{df} ab - ba$, $a, b \in \A$, i.e.,
$E_u =_{df} \A / \overline{[\A, \A]}$.
Let $T : \A \rightarrow E_u$  denote the natural quotient map.
($T$ is called the \emph{universal tracial continuous linear map}.)
From the above, we have a group
homomorphism $\Delta_T : GL^0_{\infty} (\A) \rightarrow E_u/ T(K_0(\A))$
which is called the \emph{universal de la Harpe--Skandalis determinant}.
Throughout this paper, we will simply call 
$\Delta_T$ the \emph{de la Harpe--Skandalis} determinant.
(We note that this determinant has been useful in classification theory.
See, for example, \cite{LinAsymptotic}, \cite{LinNiu}.)

Next, for a unital C*-algebra, we let $T(\A)$ denote the 
simplex of tracial states on $\A$.   

We let $\T$ denote the unit circle of the complex plane; i.e., 
$\T =_{df} \{ z \in \mathbb{C} : |z| = 1 \}$.

Throughout this paper, we let $\INT$ denote the class of C*-algebras of
the form $\bigoplus_{j=1}^m \B_j$, where for each $j$, $\B_j \cong \M_{n_j}$
or $\B_j \cong \M_{n_j}(C[0,1])$ for some positive integer $n_j$.

The following notion is due to Lin:

\begin{df}  A unital simple C*-algebra $\A$ is said to be \emph{tracially AI}
(TAI) if for any $\epsilon > 0$, for any finite subset $\F \subset \A$,
and for any nonzero
positive element $a \in \A_+$, there exists a projection $p \in \A$
and a C*-subalgebra $\I \in \INT$ with $1_{\I} = p$
such that
\begin{enumerate}
\item $1 - p$ is Murray-von Neumann equivalent to a projection in $\overline{a
\A a}$.
\item $\| px - xp \| < \epsilon$ for all $x \in \F$, and
\item $pxp$ is within $\epsilon$ of an element of $\I$, for all $x \in \F$,
\end{enumerate}
\label{df:TAI}
\end{df}

(Note:  In the above definition, ``AI" abbreviates ``approximately interval".)

Every simple unital TAI-algebra is quasidiagonal,
has real rank at most one,
stable rank one, property (SP),
and strict comparison (of projections by tracial states).
The $K_0$ group of a simple unital TAI-algebra has weak unperforation
and the Riesz Interpolation Property.  Many simple C*-algebras are TAI;
in particular every simple unital AH-algebra with bounded dimension growth
is TAI. (E.g., the algebras in 
\cite{ElliottEvans} and \cite{ElliottGongLi} are TAI.)
For these and other basic results about TAI-algebras, we refer
the reader to \cite{LinTAI}.

\begin{rem}  By \cite{LinTAI} Corollary 3.3, for the C*-algebra
$\I$  in
Definition \ref{df:TAI},
the matrix sizes of the summands of $\I$ can be taken
to be arbitrarily large.
I.e., for every $L \geq 1$, we can find an $\I$ satisfying
the conditions in Definition \ref{df:TAI} such that
every irreducible representation of $\I$ has dimension
greater than $L$ (i.e., the image of any irreducible representation
of $\I$ has the form $\M_k$ with $k \geq L$).
\label{rem:TAIMatrixSize}
\end{rem}

A final note:  In the results that follow, we will often state the
result in general, but only prove it in the infinite dimensional case.

\section{The TAI Case} 

%The first result can be found in \cite{ThomsenCommutators}.

%\begin{lem}
%Let $\A$ be a unital C*-algebra and $x_1, x_2, ..., x_n \in 
%GL(\A)$ such that $x_1 x_2 ... x_n = 1$.

%Then there exist $x \in GL_n(\A)$ and $y \in U^0_n(\A)$ such that 
%$$diag(x_1, x_2, ..., x_n) = (x,y).$$
%Moreover, if all the $x_j \in U(\A)$, $GL^0(\A)$ or $U^0(\A)$, 
%then we can choose $x$ in $U_n(\A)$, $GL_n^0(\A)$ or $U_n^0(\A)$
%respectively. 
%\label{lem:ThomsenSimpleDeterminantOne}
%\end{lem}

%\begin{proof}
%This result is \cite{ThomsenCommutators} Lemma 2.1.
%For the convenience of the reader, we provide the short proof.

%Let $d =_{df} diag(x_1, x_2, ..., x_n)$ and 
%$d_1 =_{df} diag(1, x_1, x_1 x_2, ...., x_1 x_2 ... x_{n-1})$. 

%Then $d_1 d = diag(x_1, x_1 x_2, x_1 x_2 x_3, ..., x_1 x_2 ... x_{n-1}, 1)$.
%There exists a permutation unitary $u \in \M_n$ such that 
%$u d_1 d u^* = d_1$.

%Hence, $d = d_1^{-1}u^*d_1 u$.
%Taking $x =_{df} d_1^{-1}$ and $y =_{df} u^*$, we are done.  
%\end{proof}

\begin{lem} There exist two continuous functions 
$v, w : (-\pi/2, \pi/2) \rightarrow SU(2)$ such that 
\[
(v(t), w(t)) = 
\left[  \begin{array}{cc}
e^{i t} & 0 \\
0 & e^{-it} 
\end{array}
\right] 
\]
$$\| v(t) - 1 \|, \makebox{  } \| w(t) - 1 \|  \leq | e^{it} - 1 |^{1/2}$$
%$$\| w(t) - 1 \| \leq | e^{it} - 1 |^{1/2}$$
and 
$$v(0) = w(0)=  1$$
for all $t \in (-\pi/2, \pi/2)$.
\label{lem:DHSDiagonalCommutator}
\end{lem}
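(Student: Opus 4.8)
The plan is to run the whole construction inside $SU(2)$, exploiting the classical fact that a group commutator of two elements near the identity is of second order in their displacements; this is exactly what forces the exponent $1/2$ in the estimate. Write $D(t) := \operatorname{diag}(e^{it},e^{-it})$, and note that $(v,w) = D(t)$ is equivalent to $vwv^{-1} = D(t)w$. So I first look for $w = w(t) \in SU(2)$ near $1$ for which $w$ and $D(t)w$ are conjugate in $SU(2)$; since two elements of $SU(2)$ are conjugate precisely when they have the same trace, it suffices to arrange $\operatorname{tr}(w(t)) = \operatorname{tr}(D(t)w(t))$. I will take
\[
w(t) = \begin{pmatrix} \cos\mu(t)\, e^{-it/2} & -\sin\mu(t) \\ \sin\mu(t) & \cos\mu(t)\, e^{it/2} \end{pmatrix} \in SU(2),
\]
where $\mu\colon [0,\pi/2) \to [0,\pi/2)$ with $\mu(0)=0$ is to be chosen later; a one-line computation gives $\operatorname{tr}(w(t)) = 2\cos\mu(t)\cos(t/2) = \operatorname{tr}(D(t)w(t))$, valid for every such $\mu$.

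Next I make the conjugating element explicit via the double cover $SU(2) \to SO(3)$. Writing a general element of $SU(2)$ as $x_0 I + x_1 i\sigma_1 + x_2 i\sigma_2 + x_3 i\sigma_3$ with $\sum_{j=0}^3 x_j^2 = 1$ (the $\sigma_j$ being the Pauli matrices), one computes that $w(t)$ and $D(t)w(t)$ have the same scalar part $\cos\lambda(t) := \cos\mu(t)\cos(t/2)$ and vector parts
\[
\mathbf a = (0,\,-\sin\mu,\,-\cos\mu\sin(t/2)), \qquad \mathbf b = (-\sin\mu\sin t,\,-\sin\mu\cos t,\,\cos\mu\sin(t/2)),
\]
both of Euclidean length $\sin\lambda(t) = (\sin^2\mu + \cos^2\mu\sin^2(t/2))^{1/2}$. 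For $t \ne 0$ these are nonzero, so I let $v(t) \in SU(2)$ be the lift near $1$ of the rotation in $SO(3)$ carrying $\mathbf a$ to $\mathbf b$ through the smallest angle $\theta(t) \in [0,\pi)$ (its axis is $\mathbf a \times \mathbf b$, which works out to be proportional to $(-\cos\mu\cos(t/2),\,\cos\mu\sin(t/2),\,-\sin\mu)$), and I set $v(0):=1$. Since conjugation by $v(t)$ acts on the real linear span of $i\sigma_1,i\sigma_2,i\sigma_3$ as the corresponding $SO(3)$ rotation and fixes $I$, it sends $w(t)$ to $D(t)w(t)$ exactly; that is, $(v(t),w(t)) = D(t)$.

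For the estimates, $w(t)-1$ and $v(t)-1$ are normal, so $\|w(t)-1\| = |e^{i\lambda(t)}-1| = (2 - 2\cos\mu\cos(t/2))^{1/2}$ and $\|v(t)-1\| = |e^{i\theta(t)/2}-1| = 2\sin(\theta(t)/4)$; moreover a short computation gives $\sin^2(\theta(t)/2) = \sin^2(t/2)\big/\big(\sin^2\mu + \cos^2\mu\sin^2(t/2)\big)$. Now I fix $\mu$ by $\sin^2\mu(t) = \sin(t/2)$ for $t \in [0,\pi/2)$ (so $\mu$ is continuous and $\mu(0)=0$). Then $\cos^2\mu = 1 - \sin(t/2)$, and the inequality $\cos\mu\cos(t/2) \ge 1-\sin(t/2)$ reduces, after squaring, to $\sin(t/2) \ge \sin^2(t/2)$, which holds; hence $\|w(t)-1\|^2 \le 2\sin(t/2) = |e^{it}-1|$. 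Likewise $\sin^2(\theta/2) \le \sin(t/2) < 1$, so $\theta < \pi$ and $2\sin(\theta/4) = \sin(\theta/2)/\cos(\theta/4) < \sqrt{2}\,\sin(\theta/2) \le \sqrt{2}\,\sin(t/2)^{1/2} = |e^{it}-1|^{1/2}$. Continuity of $v$ on $(0,\pi/2)$ holds because the minimal rotation between two nonzero, never-antipodal vectors depends continuously on the pair, and continuity at $0$ follows from $\|v(t)-1\| \to 0$. Finally, on $(-\pi/2,0)$ I set $v(t) := w(-t)$ and $w(t) := v(-t)$; then $(v(t),w(t)) = (v(-t),w(-t))^{-1} = D(-t)^{-1} = D(t)$, the norm bounds survive because $|e^{i\alpha}-1|$ depends only on $|\alpha|$, and continuity at $0$ is immediate.

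I expect the last step, the bookkeeping of the two estimates, to be the crux: the specific choice $\sin^2\mu(t) = \sin(t/2)$ is what makes \emph{both} $\|w(t)-1\|$ and $\|v(t)-1\|$ land exactly under the bound $|e^{it}-1|^{1/2}$. The other delicate point is checking that $(v(t),w(t))$ equals $D(t)$ \emph{on the nose} rather than merely up to higher-order terms (as a naive Baker--Campbell--Hausdorff estimate would give); this is what the same-trace/conjugacy reformulation together with the clean Pauli-basis forms of $w(t)$ and $D(t)w(t)$ buys us.
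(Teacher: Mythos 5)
Your proposal is correct, and it is a genuinely different (and self-contained) route from the paper, which proves this lemma by simply invoking \cite{HarpeSkandalisII} Lemma 5.13. I verified the key steps of your argument. Your ansatz for $w(t)$ lies in $SU(2)$ and $\operatorname{tr}(w(t)) = 2\cos\mu\cos(t/2) = \operatorname{tr}(D(t)w(t))$ holds for all $\mu$. In the Pauli basis the vector parts of $w(t)$ and $D(t)w(t)$ are as you state, both of length $(\sin^2\mu + \cos^2\mu\sin^2(t/2))^{1/2}$; since they are nonzero and never antipodal (because $\theta(t) < \pi$), the minimal rotation carrying $\mathbf a$ to $\mathbf b$ is well-defined and continuous, its axis is proportional to $(-\cos\mu\cos(t/2), \cos\mu\sin(t/2), -\sin\mu)$, and its $SU(2)$ lift near $1$ conjugates $w(t)$ exactly to $D(t)w(t)$. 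The rotation-angle identity $\sin^2(\theta/2) = \sin^2(t/2)/(\sin^2\mu + \cos^2\mu\sin^2(t/2))$ follows from the dot-product computation of $\cos\theta$. With the choice $\sin^2\mu(t) = \sin(t/2)$, the bound $\|w(t)-1\|^2 = 2 - 2\cos\mu\cos(t/2) \le 2\sin(t/2)$ reduces to $(1-\sin(t/2))^2(1+\sin(t/2)) \ge (1-\sin(t/2))^2$, which holds, and $\sin^2(\theta/2) = \sin(t/2)/(1+\sin(t/2)-\sin^2(t/2)) \le \sin(t/2)$, which gives $\|v(t)-1\| = 2\sin(\theta/4) = \sin(\theta/2)/\cos(\theta/4) < \sqrt{2}\sin(\theta/2) \le (2\sin(t/2))^{1/2} = |e^{it}-1|^{1/2}$. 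The reflection $v(t):=w(-t)$, $w(t):=v(-t)$ for $t<0$ works because $(w(-t),v(-t)) = (v(-t),w(-t))^{-1} = D(-t)^{-1} = D(t)$, and continuity at $0$ follows from the vanishing norm bounds. The paper's approach buys brevity and traceability to the de la Harpe--Skandalis source; yours buys transparency: the trace-conjugacy criterion together with the $SU(2)\to SO(3)$ calculus makes completely explicit why the commutator achieves the exact diagonal unitary, and why the displacement of $v$ and $w$ can be held to order $\lvert e^{it}-1\rvert^{1/2}$, which is the quadratic smallness of commutators that de la Harpe and Skandalis exploit more abstractly.
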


\begin{proof}
This follows from \cite{HarpeSkandalisII} Lemma 5.13.  
(Note that that $v_j(0) = 1$ for $j = 1,2$ 
follows from the inequalities.) 
\end{proof}

\begin{cor}  Let $\alpha \in \T$. 

Then there exist unitaries $v, w \in \M_2 (\mathbb{C})$ such that  
\[
(v, w) = \left[ \begin{array}{cc} \alpha & 0 \\
0 & \overline{\alpha} \end{array} \right] 
\]  

If, in addition, $|\alpha - 1 | < \sqrt{2}$, then
we may choose $v, w$ so that 
$$\| v - 1 \|, \makebox{  } \| w - 1 \| \leq |\alpha - 1|^{1/2}$$
%and
%$$\| w - 1 \|  \leq |\alpha - 1|^{1/2}.$$
\label{cor:MatrixCommutator}
\end{cor}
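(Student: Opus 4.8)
The statement consists of two logically independent assertions, and the plan is to treat them separately. For the first, plain-existence assertion I would not even need Lemma~\ref{lem:DHSDiagonalCommutator}: write $\alpha = e^{i\theta}$ with $\theta \in \mathbb{R}$, and put $v =_{df} \mathrm{diag}(e^{i\theta/2}, e^{-i\theta/2})$ together with the ``rotation by $\pi/2$'' matrix $w =_{df} \left[\begin{smallmatrix} 0 & 1 \\ -1 & 0\end{smallmatrix}\right]$, both of which are unitary (indeed in $SU(2)$). Conjugation by $w$ swaps the two diagonal entries of any diagonal matrix, so $w v^{-1} w^{-1} = v$, and hence $(v,w) = v\,(w v^{-1} w^{-1}) = v^2 = \mathrm{diag}(e^{i\theta}, e^{-i\theta}) = \mathrm{diag}(\alpha, \overline{\alpha})$, as desired. (One could instead invoke the classical fact that every element of the compact connected group $SU(2)$ is a single multiplicative commutator, but the explicit $2 \times 2$ computation is cleaner and self-contained.)

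For the refined assertion I would use Lemma~\ref{lem:DHSDiagonalCommutator} directly. The key observation is that the extra hypothesis $|\alpha - 1| < \sqrt{2}$ is exactly what forces the argument of $\alpha$ to lie in the interval on which the Lemma is available: since $|\alpha - 1|^2 = 2 - 2\,\mathrm{Re}(\alpha)$, the inequality $|\alpha - 1| < \sqrt{2}$ says precisely that $\mathrm{Re}(\alpha) > 0$, so we may choose $\theta \in (-\pi/2, \pi/2)$ with $\alpha = e^{i\theta}$. Then setting $v =_{df} v(\theta)$ and $w =_{df} w(\theta)$ from Lemma~\ref{lem:DHSDiagonalCommutator} produces unitaries in $SU(2) \subset \M_2(\mathbb{C})$ with $(v,w) = \mathrm{diag}(e^{i\theta}, e^{-i\theta}) = \mathrm{diag}(\alpha, \overline{\alpha})$ and $\|v - 1\|, \|w - 1\| \le |e^{i\theta} - 1|^{1/2} = |\alpha - 1|^{1/2}$; the degenerate case $\theta = 0$ (so $\alpha = 1$) is handled by $v = w = 1$.

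There is no real obstacle here. The only points needing a moment's thought are the observation in the previous paragraph linking the norm bound to the interval $(-\pi/2, \pi/2)$, and the (harmless) fact that the two parts of the corollary are independent, so it is immaterial that the elementary construction in the first paragraph and the Lemma-based construction in the second need not yield the same pair $(v,w)$. This corollary is only a warm-up; the substantive difficulties of the paper come later, with the uniqueness theorem (Theorem~\ref{thm:LinAUE}) and the classification-theoretic machinery.
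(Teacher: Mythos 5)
Your proof is correct and matches the paper's in substance: both handle the general case by an explicit elementary commutator identity in $\M_2(\mathbb{C})$ and handle the refined case by observing that $|\alpha-1|<\sqrt{2}$ is exactly the condition that the principal argument of $\alpha$ lies in $(-\pi/2,\pi/2)$, which puts Lemma~\ref{lem:DHSDiagonalCommutator} in scope. The only cosmetic difference is in the first part, where the paper takes $v=\mathrm{diag}(\alpha,1)$ conjugated by the flip $\left[\begin{smallmatrix}0&1\\1&0\end{smallmatrix}\right]$ rather than your $SU(2)$ pair $v=\mathrm{diag}(e^{i\theta/2},e^{-i\theta/2})$, $w=\left[\begin{smallmatrix}0&1\\-1&0\end{smallmatrix}\right]$ — same idea, trivially different matrices.
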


\begin{proof}

   If $| \alpha - 1 | < \sqrt{2}$ (i.e., the principal argument of 
$\alpha$ is in $(-\pi/2, \pi/2)$), then 
the result follows from Lemma \ref{lem:DHSDiagonalCommutator}. 

   For general $\alpha \in \T$, we note that 
\[
\left[  \begin{array}{cc} \alpha & 0 \\
0 & 1 \end{array} \right]
\left[ \begin{array}{cc} 0 & 1 \\
1 & 0 \end{array} \right]
\left[  \begin{array}{cc} \overline{\alpha} & 0 \\
0 & 1 \end{array} \right]
\left[ \begin{array}{cc} 0 & 1 \\
1 & 0 \end{array} \right]
= \left[ \begin{array}{cc} \alpha & 0 \\ 0 & \overline{\alpha} 
\end{array} \right].
\] 
\end{proof}

We now fix a notation.  Let $X$ be a metric space and let 
$S \subseteq X$ be a subset.  For every $\delta > 0$, let 
$N(S, \delta)$ denote the \emph{$\delta$-neighbourhood} of 
$S$; i.e., 
$N(S, \delta) =_{df} \{ t \in X : dist(t, S) < \delta \}$. 

\begin{lem}
Let $\theta : [0,1] \rightarrow \mathbb{R}$ be a continuous map.

Then there exists $v_1, w_1, v_2, w_2, v_3, w_3, v_4, w_4
 \in U(\M_2 (C[0,1]))$
such that
\[
(v_1(s), w_1(s))(v_2(s), w_2(s))(v_3(s), w_3(s))(v_4(s), w_4(s)) = 
\left[  \begin{array}{cc} e^{i\theta(s)} & 0 \\
0 & e^{-i \theta(s)} 
\end{array}
\right] 
\]  
for all $s \in [0,1]$.   

  Moreover, if there exists an open set $G \subseteq [0,1]$ such that
$\theta(s) = 0$ for all $s \in [0,1] - G$, then 
for every $\delta > 0$, we can choose the unitaries so that 
$w_1 = w_3 = \left[ \begin{array}{cc} 0 & 1 \\ 1 & 0 \end{array} \right]$,
and $v_k(s) = w_j(s) = 1$ for all $s \in [0,1] - N(G, \delta)$
and for $1 \leq k,j \leq n$ with $j \neq 1,3$. 
\label{lem:GeneralDiagonalCommutator}
\end{lem}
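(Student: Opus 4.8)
The plan is to reduce the statement to the scalar $\M_2$ identity already displayed in the proof of Corollary~\ref{cor:MatrixCommutator}. Let $P \in U(\M_2(\mathbb{C}))$ denote the flip (transposition) unitary, regarded also as a constant element of $U(\M_2(C[0,1]))$; the computation in that proof says precisely that $(\,\mathrm{diag}(\alpha,1),\,P\,) = \mathrm{diag}(\alpha,\overline{\alpha})$ for every $\alpha \in \T$ (note $P^{-1} = P$). First I would observe that $t \mapsto \mathrm{diag}(e^{it},1)$ is a continuous group homomorphism from $\mathbb{R}$ into $U(\M_2(\mathbb{C}))$, so that $v_1 =_{df} \mathrm{diag}(e^{i\theta(\cdot)},1)$ is a genuine unitary in $\M_2(C[0,1])$; with $w_1 =_{df} P$ this gives $(v_1,w_1)(s) = \mathrm{diag}(e^{i\theta(s)},e^{-i\theta(s)})$ for all $s \in [0,1]$. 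I would then take $v_3 =_{df} 1$ and $w_3 =_{df} P$, so that $(v_3,w_3) = (1,P) = 1$, and $v_2 = w_2 = v_4 = w_4 =_{df} 1$; the four-fold product of commutators then collapses to $(v_1,w_1)$, which is the asserted diagonal unitary. This settles the first assertion.

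For the ``moreover'' clause, suppose $\theta = 0$ off the open set $G$, and fix $\delta > 0$. Since $N(G,\delta) \supseteq G$, we have $[0,1] - N(G,\delta) \subseteq [0,1] - G$, so $\theta$ vanishes on $[0,1] - N(G,\delta)$; hence $v_1(s) = \mathrm{diag}(1,1) = 1$ there. The remaining $v_k$ and the $w_j$ with $j \neq 1,3$ are identically $1$, while $w_1 = w_3 = P$, so all of the listed conditions hold, and in fact they hold for every $\delta > 0$.

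I do not expect a genuine obstacle here: the lemma is really Corollary~\ref{cor:MatrixCommutator} transported to the coefficient algebra $C[0,1]$, with two of the four commutators written in the normalized shape $(\,\cdot\,,P)$ and the other two trivial. The only point needing a line of care is the continuity and well-definedness of $v_1$ as an element of $U(\M_2(C[0,1]))$, which is immediate from continuity of $\theta$. (Should a later argument want the two copies of $P$ to appear in honestly non-trivial commutators, one may instead split $\theta = \theta_1 + \theta_3$ into two continuous real-valued summands and apply the one-commutator construction to each, again with $w_1 = w_3 = P$; this is presumably why the count is phrased as ``four''. What will matter downstream is that the $v_j$ produced are diagonal scalar-exponential unitaries vanishing off $N(G,\delta)$, and that the only non-diagonal unitaries used are the two fixed flips $P$.)
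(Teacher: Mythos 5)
Your proof is correct, and it is genuinely simpler than the one in the paper. You observe that the flip identity
\[
\Bigl(\,\mathrm{diag}(\alpha,1),\, P\,\Bigr) \;=\; \mathrm{diag}(\alpha,\overline{\alpha}),
\qquad
P = \left[\begin{array}{cc} 0 & 1 \\ 1 & 0\end{array}\right],
\]
which is exactly the displayed computation in the proof of Corollary~\ref{cor:MatrixCommutator}, holds for \emph{every} $\alpha \in \T$, not just for $\alpha$ near $1$. Since $s \mapsto \mathrm{diag}(e^{i\theta(s)},1)$ is a continuous unitary-valued map, $v_1 := \mathrm{diag}(e^{i\theta(\cdot)},1)$ is a legitimate element of $U(\M_2(C[0,1]))$, and a single commutator $(v_1,P)$ already produces $\mathrm{diag}(e^{i\theta(\cdot)},e^{-i\theta(\cdot)})$; padding with $v_3 = 1$, $w_3 = P$, and three more trivial pairs gives the four required commutators and satisfies every clause of the ``moreover'' statement, since $\theta$ (hence $v_1 - 1$) vanishes off $G \subseteq N(G,\delta)$ and all remaining $v_k,w_j$ with $j \neq 1,3$ are constantly $1$. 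The paper instead covers $[0,1]$ by open sets on which $\theta$ varies by at most $\pi/2$, takes a subordinate partition of unity, applies Lemma~\ref{lem:DHSDiagonalCommutator} to the small-angle pieces $f_j(\theta - \theta_j)$ and the flip identity to the locally-constant pieces $f_j\theta_j$, and then recombines by the odd/even parity trick using covering dimension one. That heavier machinery is not needed here: it buys nothing for a $2\times 2$ diagonal of the form $\mathrm{diag}(e^{i\theta},e^{-i\theta})$, though it does rehearse the argument genuinely required in Lemma~\ref{lem:INTDiagonal}, where the sorting permutation varies from point to point and a partition of unity is unavoidable. It is also worth noting that your construction, like the paper's, gives $v_1$ and $v_3$ of the special form $\mathrm{diag}(\alpha(\cdot),1)$ and both flip-commutators with the \emph{same} fixed $P$ --- the precise structural features (not captured verbatim by the lemma's statement) that make the downstream recombination in Lemma~\ref{lem:INTDiagonal} Part~(1) go through --- so nothing is lost by the simplification.
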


\begin{proof}

    Let $O_1, O_2, ..., O_n$ be an open covering
of $[0,1]$ such that 
for each $j$ with $1 \leq j \leq n$,
there exists an angle $\theta_i$ so that 
$\theta(s) \in \theta_j + [-\pi/4, \pi/4]$ for all $s \in O_j$.
 
Since $[0,1]$ has covering dimension one, 
taking a refinement of the open cover if necessary, we may assume that 
each point in $[0,1]$ is contained in at most two of the $O_j$.
Moreover, rearranging the $O_j$
if necessary, we may assume that 
for $j, j'$ such that $|j - j'| \geq 2$,
$O_j \cap O_{j'} = \emptyset$.

Let $\{ f_j \}_{j=1}^n$ be a partition of unity for $[0,1]$
subordinate to $\{ O_j \}_{j=1}^n$.  

We have that for $1 \leq j \leq n$,
$\theta(s) - \theta_j \in [-\pi/4, \pi/4]$ for all $s \in O_j$.
Hence,
$f_j(s)(\theta(s) - \theta_j) \in [-\pi/4, \pi/4]$ for all $s \in [0,1]$.
Let $v, w : (-\pi/2, \pi/2) \rightarrow SU(2)$ be the
continuous functions from Lemma \ref{lem:DHSDiagonalCommutator}.
Let $\widetilde{v_j}(s) =_{df} v(f_j(s)(\theta(s) - \theta_j))$
and 
$\widetilde{w_j}(s) =_{df} w(f_j(s)(\theta(s) - \theta_j))$ 
for $s \in [0,1]$.   
Hence, by Lemma \ref{lem:DHSDiagonalCommutator},  
\[
(\widetilde{v_j}(s), \widetilde{w_j}(s)) 
=
\left[
\begin{array}{cc}
e^{i f_j(s)(\theta(s) - \theta_j)} & 0 \\
0 & e^{-i f_j(s)(\theta(s) - \theta_j)}
\end{array}
\right]
\]
for all $s \in [0,1]$.

Note that for $1 \leq j \leq n$,  
\[
\left[
\begin{array}{cc}
e^{i f_j(s)\theta_j} & 0 \\
0 & e^{-i f_j(s)\theta_j}
\end{array}
\right]
=
\left(  \left[ \begin{array}{cc} e^{i f_j(s)\theta_j} & 0 \\
0 & 1  
\end{array}
\right],
\left[
\begin{array}{cc}
0 & 1 \\
1 & 0
\end{array}
\right]
\right)
\]
for all $s \in [0,1]$.  

Also, note that $\prod_{j=1}^n e^{if_j(s) \theta_j} e^{i f_j(s) (\theta(s)-
\theta_j)} = e^{i \theta(s)}$ for all $s \in [0,1]$.  

Thus, we can take
$v_1 =_{df}
\prod_{j \makebox{  odd } } diag( e^{i f_j \theta_j}, 1)$,
$w_1 = w_3 =_{df} \left[ \begin{array}{cc} 0 & 1 \\ 1 & 0 \end{array} \right]$.
$v_2 =_{df} \prod_{j \makebox{  odd } } \widetilde{v_j}$,
$w_2 =_{df} \prod_{j \makebox{  odd } } \widetilde{w_j}$,
$v_3 =_{df} \prod_{j \makebox{  even } } diag( e^{i f_j \theta_j}, 
1)$,
$v_4 =_{df} \prod_{j \makebox{  even } } \widetilde{v_j}$,
and 
$w_4 =_{df} \prod_{j \makebox{  even } } \widetilde{w_j}$. 

    Suppose, in addition, that
$G \subseteq [0,1]$ is an open subset such that
$\theta(s) = 0$ for all $s \in [0,1] - G$.
 Let $\delta > 0$ be given.
Let $g : [0,1] \rightarrow [0, \infty)$ be a continuous function
with $0 \leq g \leq 1$ such that (i.) $g(s) = 1$ for all $s \in G$ and
(ii.) $g(s) = 0$ for all $s \in [0,1] - N(G, \delta/2)$. 
In the definitions of $v_j, \widetilde{v_j}, w_j, \widetilde{w_j}$
($1 \leq j \leq n$) above, replace every occurrence 
of $f_j$ with the (pointwise product) $g f_j$ ($1 \leq j \leq n$).
Then $v_k(s) = w_j(s) = 1$, for all $s \in [0,1] - N(G, \delta)$
and for $1 \leq k, j \leq n$ with $j \neq 1, 3$.
\end{proof}

\begin{lem} Let $\phi_k : [0,1] \rightarrow \mathbb{R}$ ($1 \leq k \leq m$)
be continuous maps such that 
$$\phi_1(s) \leq \phi_2(s) \leq \phi_3(s) \leq ... \leq \phi_m(s)$$
and
$$\sum_{k=1}^m \phi_k(s) = 0$$
for all $s \in [0,1]$. 

Then we have the following:
\begin{enumerate}
\item There exist $v_j, w_j \in U^0(\M_m(C[0,1]))$ ($1 \leq j \leq 16$)
such that 
$$diag(e^{i \phi_1}, e^{i \phi_2}, ... , e^{i \phi_m})  =
\prod_{j=1}^{16} (v_j, w_j).$$
(Here, $\prod_{j=1}^{16} (v_j, w_j) = (v_1,w_1)(v_2, w_2) ... 
(v_{16}, w_{16})$.)\\

\item
Suppose, in addition, that $ran(\phi_k) \subset (\pi/2, \pi/2)$
for $1 \leq k \leq m$.

Then there exist $v_1, w_1, v_2, w_2, v_3, w_3, v_4, w_4 \in U^0(\M_m(C[0,1]))$
such that 
$$diag(e^{i \phi_1}, e^{i \phi_2}, ... , e^{i \phi_m})  =
(v_1, w_1) (v_2, w_2) (v_3, w_3) (v_4, w_4)$$
and
$$\| v_j - 1_{\A} \|, \makebox{  } \| w_j - 1_{\A} \|
 \leq \sqrt{2} \| u - 1_{\A} \|^{1/2}$$
%and
%$$\| w_j - 1_{\A} \| \leq \sqrt{2} \| u - 1_{\A} \|^{1/2}$$   

for $1 \leq j \leq 4$,
where 
$\A =_{df} \M_m (C[0,1])$ and 
$u =_{df} diag(e^{i\phi_1}. e^{i \phi_2}, ..., e^{i \phi_m})$.
\end{enumerate}
\label{lem:INTDiagonal}
\end{lem}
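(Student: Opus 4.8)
The plan is to cut the $m\times m$ diagonal into $2\times 2$ pieces and feed these to Lemma~\ref{lem:GeneralDiagonalCommutator} (for (1)) or to Lemma~\ref{lem:DHSDiagonalCommutator}/Corollary~\ref{cor:MatrixCommutator} (for (2)), using the elementary fact that commutators supported on disjoint pairs of coordinates multiply block by block; hence an entire \emph{matching} of $2\times 2$ pieces costs only one commutator, and the whole problem reduces to realizing $diag(e^{i\phi_1},\dots,e^{i\phi_m})$ by $2\times 2$ ``transfers'' organized into a bounded number of matchings.

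For (1), put $\psi_k=_{df}\phi_1+\dots+\phi_k$ for $0\le k\le m$, so $\psi_k$ is continuous and $\psi_0=\psi_m=0$. For $1\le j\le m-1$ let $D_j\in U^0(\M_m(C[0,1]))$ be the diagonal unitary equal to $e^{i\psi_j}$ in slot $j$, $e^{-i\psi_j}$ in slot $j+1$, and $1$ elsewhere; a telescoping computation gives $\prod_{j=1}^{m-1}D_j=diag(e^{i\phi_1},\dots,e^{i\phi_m})$ (the monotonicity hypothesis is not used here). The slots $\{j,j+1\}$ with $j$ of a fixed parity are pairwise disjoint, so $P=_{df}\prod_{j\ \mathrm{odd}}D_j$ and $Q=_{df}\prod_{j\ \mathrm{even}}D_j$ are block diagonal. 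Applying Lemma~\ref{lem:GeneralDiagonalCommutator} with $\theta=\psi_j$ in the corner on $\{j,j+1\}$, write $D_j=\prod_{i=1}^4(v_i^{(j)},w_i^{(j)})$; since the odd $j$ give disjoint blocks, for each $i$ the product $\prod_{j\ \mathrm{odd}}(v_i^{(j)},w_i^{(j)})$ is the single commutator $(V_i,W_i)$, with $V_i$ (resp.\ $W_i$) the direct sum, padded by $1$'s, of the $v_i^{(j)}$ (resp.\ $w_i^{(j)}$). Hence $P=\prod_{i=1}^4(V_i,W_i)$, likewise $Q$ is a product of $4$ commutators, and $diag(e^{i\phi_1},\dots,e^{i\phi_m})=PQ$ is a product of $8$ — in particular of $16$ — commutators in $U^0(\M_m(C[0,1]))$ (this unitary group is connected, and in any case commutators lie in $DU^0$).

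For (2), the telescoping above is useless: the $\psi_j$ need not be small (they can have size of order $m\max_k\|\phi_k\|_\infty$ while each $\phi_k$ is tiny), so Corollary~\ref{cor:MatrixCommutator} does not apply to the $D_j$. Instead I would use the ordering to transfer mass \emph{directly} between the negative slots $\{k:\phi_k(s)<0\}$ (which are the low indices, by monotonicity) and the positive ones. Fixing $s$, reorder the coordinates $r_1,r_2,\dots,r_m$ by alternately taking the most negative and the most positive of the remaining phases; then the partial sums $S_j=\phi_{r_1}+\dots+\phi_{r_j}$ stay in $[-M,M]$ with $M=_{df}\max_k|\phi_k(s)|<\pi/2$. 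Running the (unique) flow on the path $r_1-r_2-\dots-r_m$, the edge $\{r_j,r_{j+1}\}$ carries the transfer $S_j$, realized on its $2\times 2$ corner as $diag(e^{iS_j},e^{-iS_j})$; since $|S_j|\le M<\pi/2$, Lemma~\ref{lem:DHSDiagonalCommutator} composed with the continuous function $S_j(\cdot)$ gives a commutator $(v,w)$ with $\|v-1\|,\|w-1\|\le|e^{iS_j}-1|^{1/2}\le\sqrt2\,\|u-1\|^{1/2}$ (using $|e^{it}-1|=2\sin(|t|/2)$, $\sin 2x\le 2\sin x$, and that transfers stay $\le 2M$ even after the gluing below). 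The path has maximum degree $2$, hence splits into $2$ matchings, hence by the disjoint-block trick into $2$ commutators. The ordering $r_1,r_2,\dots$ depends on $s$ only through the signs of the $S_j$, and it changes precisely when some $S_j(s)$ vanishes — i.e.\ when the transfer on the affected edge is $0$ — so one absorbs the $s$-dependence by covering $[0,1]$ by finitely many open sets on which the combinatorial type is constant and gluing with a partition of unity, exactly as in the proof of Lemma~\ref{lem:GeneralDiagonalCommutator}, at the cost of doubling $2$ to $4$.

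The main obstacle is exactly this last point. No \emph{fixed} transfer graph of bounded degree works for all admissible $\phi$ (a subtree of $\ge 2$ coordinates on one side of an edge can be forced to carry an edge-flow close to $\pi/2$, which drives one to a star, of unbounded degree), so the reordering must genuinely vary with $s$; the whole difficulty of (2) is to carry out this variation — keeping every $2\times 2$ transfer inside $(-\pi/2,\pi/2)$ and of size comparable to $\max_k\|\phi_k\|_\infty$, so that both the commutator decomposition of Corollary~\ref{cor:MatrixCommutator} and the stated norm bound survive — within a total of four matchings, and to check the gluing produces genuinely continuous unitaries. By contrast, once Lemma~\ref{lem:GeneralDiagonalCommutator} and the disjoint-block observation are in hand, (1) is only bookkeeping.
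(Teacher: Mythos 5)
Your proof of Part~(1) is correct, and it is genuinely different from — and simpler than — the paper's. The paper handles Part~(1) by rerunning the whole covering/permutation/partition-of-unity machinery of Part~(2) with Lemma~\ref{lem:GeneralDiagonalCommutator} in place of Lemma~\ref{lem:DHSDiagonalCommutator}, which yields $16$ commutators. You instead observe that Lemma~\ref{lem:GeneralDiagonalCommutator} makes no smallness demand on $\theta$, so the naive telescoping $u=\prod_{j=1}^{m-1}D_j$ with $D_j = \mathrm{diag}(e^{i\psi_j},e^{-i\psi_j})$ in slots $\{j,j+1\}$ and $\psi_j=\phi_1+\dots+\phi_j$ already works, no reordering or covering needed; grouping the $D_j$ by parity gives $8$ commutators. (You also correctly note that monotonicity is unused for Part~(1); only $\sum\phi_k=0$, through $\psi_m=0$, matters.) Connectedness of $U(\M_m(C[0,1]))$ takes care of the $U^0$ requirement. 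This is a clean improvement.

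Your Part~(2), however, is not a complete proof, and two of the details you rely on are wrong as stated. First, the reordering rule ``alternately take the most negative then the most positive of the remaining phases'' does not keep the partial sums inside $[-M,M]$: with $\phi=(-3,-3,2,2,2)$ you get $S_3=-4$, and with $\phi=(3,3,-2,-2,-2)$ the ``start positive'' variant gives $S_3=4$. The correct rule is conditional: if $S_{j-1}\le 0$ add the largest remaining term, otherwise the smallest; this gives $\phi_1(s)\le S_l(s)\le \phi_m(s)$, which is what the paper asserts. Second, the claim that ``transfers stay $\le 2M$ even after the gluing'' is too loose to be usable: if $M$ is close to $\pi/2$ then $2M$ exits $(-\pi/2,\pi/2)$, and Lemma~\ref{lem:DHSDiagonalCommutator} no longer applies at all, so the whole scheme collapses precisely at the endpoint of the hypothesis. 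What is actually needed (and what the paper arranges) is that after patching the partial sums are bounded by $\phi_1(s)-\delta$ and $\phi_m(s)+\delta$ for a $\delta>0$ chosen in advance so that $\mathrm{ran}(\phi_k)\pm\delta\subset(-\pi/2,\pi/2)$ and $|e^{\pm i\delta}-1|<\|u-1\|$; the $\sqrt2$ in the norm estimate comes from this $\delta$-loosening via a triangle inequality, not from ``$\le 2M$''. Finally, the reference to ``exactly as in the proof of Lemma~\ref{lem:GeneralDiagonalCommutator}'' is misleading: that proof patches by subtracting off a locally constant offset angle $\theta_j$, which is irrelevant here. The correct patching (the one the paper carries out) covers $[0,1]$ by sets $O_j$ on which one fixed permutation $\sigma_j$ works up to $\delta$, takes a partition of unity $\{f_j\}$ with $O_j\cap O_{j'}=\emptyset$ when $|j-j'|\ge2$, cuts the \emph{exponent} into $a_j=f_j\,\mathrm{diag}(\phi_1,\dots,\phi_m)$, and on each $O_j$ splits the conjugated $a_j$ into the two path-matchings (the paper's equation~(\ref{equ:A-ADecomposition})). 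Your idea is in essence the same — two matchings on the path, doubled by the odd/even covering — so once you replace the alternating rule by the conditional greedy rule, tighten the bound from $2M$ to $M+\delta$, and actually write out the partition-of-unity step in the paper's style, Part~(2) goes through with $4$ commutators.
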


\begin{proof}

   The proof is a modification of the arguments of 
\cite{ThomsenCommutators} Lemma 2.7 and Lemma 2.8, where we additionally
use Lemma \ref{lem:DHSDiagonalCommutator} and
Lemma \ref{lem:GeneralDiagonalCommutator}. 
(Indeed, the proof of Part (2) is contained in the proof of
\cite{ThomsenCommutators} Lemma 2.8.)
We provide the argument for the convenience of the reader.

     If $diag(e^{i\phi_1}, e^{i\phi_2}, ..., e^{i\phi_m}) = 1_{\A}$ 
then we are done.  Hence, let us assume that
$diag(e^{i\phi_1}, e^{\phi_2}, ..., e^{i \phi_m}) \neq 1_{\A}$.

     Choose $\delta > 0$ small enough so that 
$\delta < \pi/4$, 
$|e^{i\delta} - 1| < \| u - 1 \|$ and
$| e^{-i\delta} - 1 | < \| u - 1 \|$.
Moreover, if $ran(\phi_j) \subset (-\pi/2, \pi/2)$ we require
$ran(\phi_j) \pm \delta \subset (-\pi/2, \pi/2)$.

    Firstly, note that for each $s \in [0,1]$, since
$\sum_{k=1}^m \phi_k(s) = 0$, there is a permutation $\sigma$ of 
$\{ 1, 2, ..., m \}$ ($\sigma$ is dependent on $s$) such that 
$$\phi_1(s) \leq \sum_{k=1}^l \phi_{\sigma(k)}(s) \leq \phi_m(s)$$
for $1 \leq l \leq m$.

   Since $[0,1]$ is compact, let $\{ O_j \}_{j=1}^n$ be an open covering
of $[0,1]$ and for $1 \leq j \leq n$, let $x_j \in U(\M_m)$ be a permutation
unitary and let $\sigma_j$ be a permutation of $\{ 1, 2, ..., m \}$
such that 
$$x_j diag(\phi_1, \phi_2, ..., \phi_m) x_j^* = 
diag(\phi_{\sigma_j(1)}, \phi_{\sigma_j(2)}, ..., \phi_{\sigma_j(m)})$$
and
$$\phi_1(s) - \delta \leq \sum_{k=1}^l \phi_{\sigma_j(k)}(s) \leq \phi_m(s)
+ \delta$$
for $1 \leq l \leq m$ and for all $s \in O_j$.  

Let $\gamma > 0$ be given. 
Since $[0,1]$ has covering dimension one, taking refinements,  
permuting and contracting the $O_j$s and contracting $\gamma > 0$
if necessary, we may assume that 
if $|j - j'| \geq 2$ then $N(O_j, \gamma) \cap N(O_{j'}, \gamma) = \emptyset$.
  
Let $\{ f_j \}_{j=1}^n$ be a partition of unity of $[0,1]$ subordinate
to $\{ O_j \}_{j=1}^n$.  
For $1 \leq j \leq n$, 
let $a_j \in \M_m(C[0,1])$ be the self-adjoint element given by
$$a_j =_{df} f_j diag( \phi_1, \phi_2, ..., \phi_m)$$
and for $1 \leq k \leq m$, let  
$$\psi_{j,k} =_{df} f_j \sum_{l=1}^k \phi_{\sigma_j(l)}.$$
Hence,
$$diag(\phi_1, \phi_2, ..., \phi_m) = \sum_{j=1}^n a_j$$
and
for $1 \leq j \leq n$,
\begin{equation}
x_j a_j x_j^* = diag(\psi_{j,1}, - \psi_{j,1},  \psi_{j,3}, -\psi_{j,3},
...) + diag(0, \psi_{j,2}, -\psi_{j,2}, \psi_{j,4}, -\psi_{j,4}, ...)
\label{equ:A-ADecomposition}
\end{equation} 
where the first diagonal ends with zero if $m$ is odd, and
the second diagonal ends with zero if $m$ is even.
 
We consider the two cases (or parts) in the statement of the Lemma.

\emph{Case 1 or Part (2):}  Suppose that $ran(\phi_k) \subset (-\pi/2, \pi/2)$
for $1 \leq k \leq m$.

  By (\ref{equ:A-ADecomposition}) and by Lemma \ref{lem:DHSDiagonalCommutator},
we have that for $1 \leq j \leq n$ and $1 \leq l \leq 2$,
there exist unitaries $v_{j,l}, w_{j,l} \in \M_m(C[0,1])$ such that 
$$x_j^*diag(e^{i\psi_{j,1}}, e^{-i\psi_{j,1}}, e^{i\psi_{j,3}}, 
e^{-i\psi_{j,3}}, ...)x_j 
= (v_{j,1}, w_{j,1})$$
$$x_j^* diag(1, e^{i\psi_{j,2}}, e^{-i\psi_{j,2}}, e^{i\psi_{j,4}}, 
e^{-i\psi_{j,4}}, ...)x_j  
= (v_{j,2}, w_{j,2})$$
and for $l = 1,2$,
$$v_{j,l}(s) = w_{j,l}(s) = 1$$
for all $s \in [0,1] - O_j$.
Moreover, by Lemma \ref{lem:DHSDiagonalCommutator} and 
by our choice of $\delta$,
for $1 \leq j \leq n$ and $l = 1,2$,
\begin{eqnarray*}
& & \| v_{j,l} - 1 \| \\
& \leq & max \{ |e^{i(\phi_1(s) - \delta)} - 1 |^{1/2},
|e^{i(\phi_m(s) + \delta)} - 1 |^{1/2} : s \in O_j \} \\
& \leq & max \{ |e^{i\phi_1(s)} (e^{-i \delta} - 1)| + |e^{i \phi_1(s)} - 
1|,  |e^{i\phi_m(s)} (e^{i \delta} - 1)| + |e^{i \phi_m(s)} - 1| : s 
\in O_j \}^{1/2}\\ 
& \leq & \sqrt{2} \| u - 1 \|^{1/2}.
\end{eqnarray*}
Similarly, $$\| w_{j,l} - 1 \| \leq \sqrt{2} \| u - 1 \|^{1/2}.$$

Now let $v_1 =_{df} \prod_{j \makebox{  odd  }} v_{j,1}$,
$w_1 =_{df} \prod_{j \makebox{  odd  }} w_{j,1}$, 
$v_2 =_{df} \prod_{j \makebox{  even  }} v_{j,1}$,
$w_2 =_{df} \prod_{j \makebox{  even  }} w_{j,1}$,
$v_3 =_{df} \prod_{j \makebox{  odd  }} v_{j,2}$,
$w_3 =_{df} \prod_{j \makebox{  odd  }} w_{j,2}$, 
$v_4 =_{df} \prod_{j \makebox{  even  }} v_{j,2}$,
$w_4 =_{df} \prod_{j \makebox{  even  }} w_{j,2}$. 

Then for $1 \leq l \leq 4$,
$\| v_l - 1 \| \leq \sqrt{2} \| u - 1 \|^{1/2}$ and
$\| w_l - 1 \| \leq \sqrt{2} \| u - 1 \|^{1/2}$.
Also, 
$$u = \prod_{l=1}^4 (v_l, w_l)$$
as required.\\

\emph{Case 2 or Part (1):}  General case.

The proof for this case is the same as that of Case 1, except that
we replace Lemma \ref{lem:DHSDiagonalCommutator} with  
Lemma \ref{lem:GeneralDiagonalCommutator} and 
we get sixteen commutators (instead of four). (We also
do not get a norm estimate for the unitaries that make up 
the commutators.)  
\end{proof}

\begin{lem} 
 Let $\phi_k : [0,1] \rightarrow \mathbb{R}$ ($1 \leq k \leq m$)
be continuous maps such that
$$\phi_1(s) \leq \phi_2(s) \leq \phi_3(s) \leq ... \leq \phi_m(s)$$
and
$$\sum_{k=1}^m \phi_k(s) = 0$$
for all $s \in [0,1]$.

Then 
there exist $x_j, y_j \in GL^0(\M_m(C[0,1]))$ ($1 \leq j \leq 4$)
such that
$$diag(e^{\phi_1}, e^{\phi_2}, ... , e^{\phi_m})  =
\prod_{j=1}^{4} (x_j, y_j)$$
and
$$\| x_j - 1_{\A} \|, \makebox{  } \| y_j - 1_{\A} \| \leq 2 \| z - 1_{\A} \|^{1/2}$$

%and

%$$\| y_j - 1_{\A} \| \leq 2 \| z - 1_{\A} \|^{1/2}$$
for $1 \leq j \leq 4$,
where
$\A =_{df} \M_m (C[0,1])$ and
$z =_{df} diag(e^{\phi_1}, e^{ \phi_2}, ..., e^{ \phi_m})$.
\label{lem:InvertibleDiagonal}
\end{lem}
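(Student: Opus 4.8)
The plan is to adapt the proof of Part~(2) of Lemma~\ref{lem:INTDiagonal} (which follows \cite{ThomsenCommutators}) to the hyperbolic diagonal $\mathrm{diag}(e^{\phi_1},\dots,e^{\phi_m})$ in place of the elliptic one. The key new local ingredient is a $GL_2(\mathbb C)$-analogue of Lemma~\ref{lem:DHSDiagonalCommutator}: continuous maps $\hat v,\hat w\colon\mathbb R\to GL_2(\mathbb C)$ with $\hat v(0)=\hat w(0)=1$, $(\hat v(t),\hat w(t))=\mathrm{diag}(e^{t},e^{-t})$, and a square-root-type norm bound on $\hat v(t)-1$ and $\hat w(t)-1$. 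I would obtain this either by analytic continuation of the construction in \cite{HarpeSkandalisII} Lemma~5.13 (substituting $-it$ for the real parameter, which is legitimate since the $SU(2)$-formulas there are entire in that parameter and yields $SL_2(\mathbb C)$-valued maps), or directly: for large $|t|$ the explicit pair $\bigl(\mathrm{diag}(e^{t/2},e^{-t/2}),\,\sigma\bigr)$, with $\sigma$ the coordinate-exchange matrix, already works; for $t$ near $0$ a standard inverse-function-theorem and rescaling argument in $SL_2$ works; and the two regimes are patched continuously.

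With this in hand, the global argument follows Lemma~\ref{lem:INTDiagonal} line by line. Since $\sum_k\phi_k(s)=0$ forces $\phi_1(s)\le 0\le\phi_m(s)$, for each $s$ there is a permutation of $\{1,\dots,m\}$ whose partial sums of the $\phi_k(s)$ lie in $[\phi_1(s),\phi_m(s)]$; by compactness and $\dim[0,1]=1$ one chooses a finite open cover $\{O_j\}$ of $[0,1]$ with a fixed such permutation $\sigma_j$ and permutation unitary $x_j$ working on $O_j$, arranges $O_j\cap O_{j'}=\emptyset$ for $|j-j'|\ge 2$, takes a subordinate partition of unity $\{f_j\}$, and forms $a_j=f_j\,\mathrm{diag}(\phi_1,\dots,\phi_m)$ and $\psi_{j,k}=f_j\sum_{l=1}^{k}\phi_{\sigma_j(l)}$, which decompose $x_j a_j x_j^*$ as in~(\ref{equ:A-ADecomposition}). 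Applying the $GL_2(\mathbb C)$-analogue of Lemma~\ref{lem:DHSDiagonalCommutator} inside each $2\times 2$ block realizes $x_j^*\mathrm{diag}(e^{\psi_{j,1}},e^{-\psi_{j,1}},e^{\psi_{j,3}},\dots)x_j$ and $x_j^*\mathrm{diag}(1,e^{\psi_{j,2}},e^{-\psi_{j,2}},\dots)x_j$ as single commutators $(v_{j,1},w_{j,1}),(v_{j,2},w_{j,2})$ in $GL^0(\M_m(C[0,1]))$, equal to $1$ off $O_j$; these are diagonal, hence commute, so grouping them by the parity of $j$ exactly as in Lemma~\ref{lem:INTDiagonal} produces four factors $x_1,\dots,x_4$ and $y_1,\dots,y_4$ with $\prod_{j=1}^4(x_j,y_j)=\exp\!\bigl(\sum_j a_j\bigr)=\mathrm{diag}(e^{\phi_1},\dots,e^{\phi_m})$.

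For the norm estimate, assume $z\ne 1_\A$ (otherwise there is nothing to prove) and fix $\delta>0$ small enough that $1-e^{-\delta}<\|z-1\|$ and $(1+\|z-1\|)(e^{\delta}-1)<\|z-1\|$. Since $\psi_{j,k}(s)\in[\phi_1(s)-\delta,\phi_m(s)+\delta]$ and $0$ lies in this interval, $|e^{\psi_{j,k}(s)}-1|\le\max\{|e^{\phi_1(s)-\delta}-1|,\,|e^{\phi_m(s)+\delta}-1|\}$; using $e^{\phi_1(s)}\le 1\le e^{\phi_m(s)}\le 1+\|z-1\|$ together with the choice of $\delta$, both terms are $<2\|z-1\|$, so $\|v_{j,l}-1\|,\|w_{j,l}-1\|\le\sqrt 2\,\|z-1\|^{1/2}\le 2\,\|z-1\|^{1/2}$, and this bound passes to the four assembled factors. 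When the exponents are unbounded — so that $\|z-1\|$ is large — one has the extra slack that permutation matrices, being within $2\le 2\|z-1\|^{1/2}$ of $1$, may be used as factors; this is what allows the ``cyclic commutator'' realization $z^{1/2}=(\,\mathrm{diag}(\cdot)\,,\,m\text{-cycle}\,)$, hence $z$ as a product of two commutators, to be run with the required control in that regime (again with the sorting permutation chosen to keep partial sums in $[\phi_1,\phi_m]$, so that only the positive partial sums contribute more than $1$ to $\|\mathrm{diag}(\cdot)-1\|$), and the two regimes are combined with a threshold on $\|z-1\|$.

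The main obstacle is the norm bookkeeping rather than the algebra: producing the $GL_2(\mathbb C)$-analogue of Lemma~\ref{lem:DHSDiagonalCommutator} with a clean square-root estimate, and then arranging the sorting permutations $\sigma_j$ and the case split carefully enough that the final constant is the claimed uniform $2$. The delicate point is that the exponents fed into the $2\times 2$ blocks are partial sums, and it is only the hypothesis $\phi_1(s)\le 0\le\phi_m(s)$ — which keeps $|e^{\psi_{j,k}}-1|$ comparable to $\|z-1\|$ on the positive side while the negative side contributes less than $1$ — that prevents an $m$-dependent loss. Once the $GL_2(\mathbb C)$-analogue and the $\delta$-estimate above are in place, the rest is the bookkeeping already carried out for Lemma~\ref{lem:INTDiagonal}.
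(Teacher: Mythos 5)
Your plan is exactly the paper's indicated second route: run the argument of Lemma~\ref{lem:INTDiagonal} Part~(2) with the $SU(2)$ ingredient (Lemma~\ref{lem:DHSDiagonalCommutator}) replaced by an $SL_2(\mathbb{C})$ analogue realizing $\mathrm{diag}(e^{t},e^{-t})$ as a single commutator with a square-root norm bound; the paper does not reconstruct that analogue but simply cites \cite{ThomsenCommutators} Lemma~2.6 for it. One caveat on your construction: the estimate $\|v(t)-1\|\le|e^{it}-1|^{1/2}$ from \cite{HarpeSkandalisII} Lemma~5.13 is not automatically inherited under the substitution $t\mapsto -it$ (analytic continuation preserves identities, not inequalities), so you must either verify the bound directly for the continued formulas or cite Thomsen's lemma, which already gives a bound of the required form and makes the threshold / ``cyclic commutator'' patching at the end of your sketch unnecessary. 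The remainder of your argument --- finite cover, sorting permutations $\sigma_j$, partition of unity, the decomposition into two diagonal factors per $O_j$, parity grouping into four commutators, and the $\delta$-bookkeeping that uses $\phi_1\le 0\le\phi_m$ and $e^{\phi_m}\le 1+\|z-1\|$ to land on $\sqrt{2}\,\|z-1\|^{1/2}\le 2\,\|z-1\|^{1/2}$ --- matches the paper's proof via Lemma~\ref{lem:INTDiagonal} Part~(2) (equivalently, Thomsen's Lemma~2.7).
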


\begin{proof}

   The proof is essentially the same as \cite{ThomsenCommutators} 
Lemma 2.7.  
Alternatively, the proof is the same as Lemma \ref{lem:INTDiagonal}
Part (2), but with Lemma \ref{lem:DHSDiagonalCommutator} 
replaced with \cite{ThomsenCommutators} Lemma 2.6.
\end{proof}

Recall the definitions of ``TAI" and ``$\INT$" from
end of the Introduction.

\begin{lem}
Let $\A$ be a unital separable simple
TAI-algebra and let 
$\{ c_n \}_{n=1}^{\infty}$ be a countable dense subset of the 
closed unit ball of $\A$.   

Let $\{ \I_n \}_{n=1}^{\infty}$ be a sequence of C*-subalgebras of $\A$, with
$\I_n \in \INT$ for all $n$ and let $\{ p_n \}_{n=1}^{\infty}$ be a sequence
of projections in $\A$ with $1_{\I_n} = p_n$ for all $n \geq 1$
such that for all $n \geq 1$,  the following hold:
\begin{enumerate}
\item $\tau(1- p_n) < 1/n$ for all $\tau \in T(\A)$, 
\item $\| p_n c_k - c_k p_n \| < 1/n$ for all $k \leq n$, and  
\item $p_n c_k p_n$ is within $1/n$ of an element of $\I_n$ for all $k \leq n$. 
\end{enumerate}
 Suppose that $a \in \A$ is an element such that 
$|\tau(a)| < \epsilon$ for all $\tau \in \A$.
For all $n \geq 1$, let $a_n \in \I_n$ such that 
$\| p_m a p_m - a_m \| \rightarrow 0$ as $m \rightarrow \infty$.

   Then there exists $N \geq 1$ such that for all $n \geq N$,
for all $\tau \in T(\I_n)$,
$|\tau(a_n)| < \epsilon$.  
\label{lem:AsymptoticSubalgebras}
\end{lem}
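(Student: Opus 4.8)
The plan is to argue by contradiction: from a failure of the conclusion I will manufacture an honest tracial state $\psi$ on $\A$ with $|\psi(a)| \geq \epsilon$, which contradicts the hypothesis on $a$. So suppose the conclusion fails. Then there are integers $n_1 < n_2 < \cdots$ and, for each $j$, a tracial state $\tau_j \in T(\I_{n_j})$ with $|\tau_j(a_{n_j})| \geq \epsilon$. Since $\I_{n_j}$ is a unital C*-subalgebra of $p_{n_j}\A p_{n_j}$ (both have unit $p_{n_j}$), I extend $\tau_j$ to a state $\widetilde{\tau_j}$ on $p_{n_j}\A p_{n_j}$ by Hahn--Banach and set $\psi_j(x) =_{df} \widetilde{\tau_j}(p_{n_j} x p_{n_j})$ for $x \in \A$. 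Being $\widetilde{\tau_j}$ composed with the unital completely positive compression $x \mapsto p_{n_j} x p_{n_j}$, each $\psi_j$ is a state on $\A$.

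First I would connect $\psi_j(a)$ with $\tau_j(a_{n_j})$: since $a_{n_j} \in \I_{n_j}$ and $\widetilde{\tau_j}$ restricts to $\tau_j$ there, $\psi_j(a) - \tau_j(a_{n_j}) = \widetilde{\tau_j}(p_{n_j} a p_{n_j} - a_{n_j})$, so $|\psi_j(a) - \tau_j(a_{n_j})| \leq \|p_{n_j} a p_{n_j} - a_{n_j}\| \to 0$ by hypothesis; hence $\liminf_j |\psi_j(a)| \geq \epsilon$.

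The crux is that the states $\psi_j$ are asymptotically tracial on the dense set $\{c_k\}$. Fix $k,l$ and take $j$ large enough that $n_j \geq \max\{k,l\}$, and write $b_{j,i} =_{df} p_{n_j} c_i p_{n_j}$. Hypothesis (2) gives $\|p_{n_j} c_k(1 - p_{n_j})\| < 1/n_j$, hence $\|p_{n_j} c_k c_l p_{n_j} - b_{j,k} b_{j,l}\| < 1/n_j$; hypothesis (3) gives $d_{j,i} \in \I_{n_j}$ with $\|b_{j,i} - d_{j,i}\| < 1/n_j$, whence $\|b_{j,k} b_{j,l} - d_{j,k} d_{j,l}\| \to 0$. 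Since $\widetilde{\tau_j}$ is contractive and equals the \emph{trace} $\tau_j$ on $\I_{n_j}$, this yields $\psi_j(c_k c_l) = \tau_j(d_{j,k} d_{j,l}) + o(1) = \tau_j(d_{j,l} d_{j,k}) + o(1) = \psi_j(c_l c_k) + o(1)$, i.e. $\psi_j(c_k c_l) - \psi_j(c_l c_k) \to 0$ as $j \to \infty$, for every fixed $k,l$.

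Finally, since $\A$ is separable its state space is weak-$*$ metrizable and compact, so I pass to a subsequence along which $\psi_j$ converges weak-$*$ to a state $\psi$ of $\A$. Then $\psi(c_k c_l) = \lim \psi_j(c_k c_l) = \lim \psi_j(c_l c_k) = \psi(c_l c_k)$ for all $k,l$, so $\psi$ is tracial on the (dense) linear span of $\{c_k\}$; by norm-continuity of $\psi$ this gives $\psi \in T(\A)$, while $|\psi(a)| = \lim |\psi_j(a)| \geq \epsilon$, contradicting $|\tau(a)| < \epsilon$ for every $\tau \in T(\A)$. The only delicate point is the asymptotic-traciality estimate of the third paragraph, which is elementary but requires carefully tracking the three error terms coming from (2), (3), and the convergence $\|p_m a p_m - a_m\| \to 0$; I note that hypothesis (1) plays no role in this particular lemma.
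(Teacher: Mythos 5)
Your argument is correct, and it reaches the same contradiction the paper does but by a genuinely different packaging. The paper's proof assembles all the $\I_{n_l}$ into the sequence algebra $\prod_l \I_{n_l} / \sum_l^{\oplus} \I_{n_l}$, produces a unital $*$-embedding $\Phi$ of $\A$ into that quotient (using hypotheses (2) and (3) to show $\Phi$ is multiplicative), takes a weak-$*$ limit $\mu$ of the lifted traces $\widetilde{\tau}_l$ on the product, and then observes that $\mu \circ \Phi$ is automatically a tracial state because $\Phi$ is a $*$-homomorphism and $\mu$ is a trace. You instead stay inside $\A$ itself: you Hahn--Banach-extend each $\tau_j$ from $\I_{n_j}$ to a state on $p_{n_j}\A p_{n_j}$, pull it back to a state $\psi_j$ on $\A$ by compression, and verify \emph{by hand} (using (2), (3), and the approximation of the products $c_kc_l$) that $\psi_j(c_kc_l) - \psi_j(c_lc_k) \to 0$; weak-$*$ compactness of $S(\A)$ then gives a limit state, and density of the linear span of $\{c_k\}$ upgrades it to a trace. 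The two routes carry the same analytic content -- the approximate commutation estimates that make your ``asymptotic traciality'' work are exactly what make the paper's $\Phi$ multiplicative -- but your version avoids forming the quotient C$^*$-algebra at the cost of an explicit $\epsilon$-chase. One small note: you could equally well have extended $\tau_j$ directly from $\I_{n_j}$ to a state on $\A$ (Hahn--Banach across a non-unital inclusion still works since $\|\tau_j\| = \tau_j(p_{n_j}) = 1$ forces positivity of the extension, with the extension vanishing on $(1-p_{n_j})\A(1-p_{n_j})$), but routing through the compression as you do makes the estimate $\psi_j(c_k c_l) = \widetilde{\tau}_j(p_{n_j}c_kc_lp_{n_j})$ cleaner. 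Your remark that hypothesis (1) is unused here is also accurate.
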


\begin{proof}

Firstly, since the map $T(\A) \rightarrow \mathbb{C} : \tau \mapsto \tau(a)$
is a continuous function on the compact set $T(\A)$, 
let $0 \leq \delta < \epsilon$ be such that 
$\delta = \max \{ |\tau(a)| : \tau \in T(\A) \}$.  
I.e., $|\tau(a)| \leq \delta < \epsilon$ for all $\tau \in T(\A)$.

Suppose, to the contrary, that $\{ n_l \}_{l=1}^{\infty}$ is a subsequence
of the positive integers and for all $l \geq 1$, $\tau_l \in T(\I_{n_l})$
is such that $|\tau_l(a_{n_l})| \geq \epsilon$.

Let $\prod_{l=1}^{\infty} \I_{n_l}$ and 
${\sum^{\oplus}_{l=1}}^{\infty} \I_{n_l}$ be the ($l_{\infty}$) direct product
and ($c_0$) direct sum respectively.   
For each $k \geq 1$, $\tau_{k}$ induces an element $\tilde{\tau}_{k} \in 
T(\prod_{l=1}^{\infty} \I_{n_l})$ in the following manner:
For $\{ b_l \}_{l=1}^{\infty} \in \prod_{l=1}^{\infty} \I_{n_l}$,
$\tilde{\tau}_{k}(\{ b_l \}_{l=1}^{\infty}) =_{df} \tau_{k}(b_{k})$. 

Since $T(\prod_{l=1}^{\infty} \I_{n_l})$ is compact, 
$\{ \tilde{\tau}_{l} \}_{l=1}^{\infty}$ must have a converging subnet
$\{ \tilde{\tau}_{l_{\alpha}} \}$.
Suppose that $\lim_{\alpha} \tilde{\tau}_{l_{\alpha}} = 
\mu \in T(\prod_{l=1}^{\infty} \I_{n_l})$.   Note that 
${\sum^{\oplus}}_{l=1}^{\infty} \I_{n_l}$ is contained in the kernel of
$\mu$.  Hence, $\mu$ naturally induces a trace in 
$T(\prod_{l=1}^{\infty} \I_{n_l}/{\sum^{\oplus}}_{l=1}^{\infty} \I_{n_l})$,
which we also denote by ``$\mu$".   

    Let $\Phi : \A \rightarrow \prod_{l=1}^{\infty} 
\I_{n_l}/{\sum^{\oplus}}_{l=1}^{\infty} \I_{n_l}$ be the unital *-embedding 
that is defined as follows:

   Let $d \in \A$ be given.
Then 
$$\Phi(d) =_{df} [\{ d_l \}_{l=1}^{\infty}]$$
where $d_l \in \I_{n_l}$ for all $l \geq 1$,
$\| p_{n_l} d p_{n_l} - d_l \| \rightarrow 0$ as $l \rightarrow \infty$,
and $[\{ d_l \}_{l=1}^{\infty}]$ is the equivalence class of
$\{ d_l \}_{l=1}^{\infty}$ in $\prod_{l=1}^{\infty} 
\I_{n_l}/{\sum^{\oplus}}_{l=1}^{\infty} \I_{n_l}$.  (It is clear that $\Phi$
is a well-defined unital *-homomorphism; in particular, $\Phi(d)$ is
independent of the choice of the
sequence $\{ d_l \}$ with the above properties.) 

Then $\mu \circ \Phi \in T(\A)$.
Then $|\mu \circ \Phi (a)| = |\mu ([ \{ a_l \}_{l=1}^{\infty} ])| 
= \lim_{\alpha} |\tilde{\tau}_{l_{\alpha}}(\{ a_l \}_{l=1}^{\infty})| = 
 \lim_{\alpha} |\tau_{l_{\alpha}}(a_{l_{\alpha}})| 
\geq \epsilon$.
This contradicts our assumption that 
$|\tau(a ) | \leq \delta < \epsilon$ for all $\tau \in T(\A)$. 
\end{proof}

The next lemma is a straightforward computation. 

\begin{lem}  
Let $n \in \mathbb{Z}_+ \cup \{ \infty \}$. 
Let $\A$ be a unital C*-algebra, 
let $V$ be a Banach space and 
let $\tau : \A \rightarrow V$ be a tracial continuous linear map. 
Let  
$\xi : [t_0, t_1] \rightarrow  GL_{n}^0 (\A)$ (or $U_{n}^0 (\A)$)
be a piecewise continuously
differentiable curve with $\xi(t_0) = 1$.

For every $\epsilon > 0$, there exists $\delta > 0$ such that the following
hold:

If $x \in GL_{n}^0(\A)$ (resp. $U_{n}^0 (\A)$)   
is such that $\|x - \xi(t_1) \| < \delta$,
then there exists a piecewise continuously differentiable curve 
$\eta : [t_0, t_1] \rightarrow GL_{n}^0 (\A)$ (resp. $U_{n}^0 (\A)$)
with 
$\eta(t_0) = 1$ and $\eta(t_1) = x$  such that
$$\| \tDS_{\tau}(\xi) - \tDS_{\tau}(\eta) \| < \epsilon$$   
\label{lem:DeterminantContinuityPart1}
\end{lem}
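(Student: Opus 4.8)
The plan is to produce $\eta$ by splicing the given curve $\xi$ together with a short ``exponential tail'' joining $\xi(t_1)$ to $x$, and to exploit the fact that $\tDS_\tau$ is additive under concatenation of piecewise continuously differentiable curves and invariant under (piecewise smooth) reparametrization --- both facts being immediate from the integral formula defining $\tDS_\tau$, and already implicit in \cite{HarpeSkandalisI} Lemma 1 (also \cite{HarpeSurvey} Lemma 10). So the only real content is to build a tail whose $\tDS_\tau$ has small norm.

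First I would treat the invertible case. Given $\epsilon > 0$, pick $\delta_0 \in (0,1)$ with $-\log(1-\delta_0) < 2\pi\epsilon/(\|\tau\|+1)$ and set $\delta =_{df} \delta_0/\|\xi(t_1)^{-1}\|$. If $x \in GL_n^0(\A)$ with $\|x - \xi(t_1)\| < \delta$, then $\|x\xi(t_1)^{-1} - 1\| \le \|x-\xi(t_1)\|\,\|\xi(t_1)^{-1}\| < \delta_0 < 1$, so $b =_{df} \log(x\xi(t_1)^{-1}) = \sum_{k\ge1}\frac{(-1)^{k+1}}{k}(x\xi(t_1)^{-1}-1)^k$ is a well-defined element of $\M_n(\A)$ with $\|b\| \le -\log(1-\delta_0)$ and $e^b = x\xi(t_1)^{-1}$. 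Define $\zeta : [0,1] \to GL_n^0(\A)$ by $\zeta(s) =_{df} e^{sb}\xi(t_1)$; this is real-analytic, runs from $\xi(t_1)$ to $x$, stays in $GL_n^0(\A)$, and --- because the right-hand factor $\xi(t_1)$ cancels --- satisfies $\zeta'(s)\zeta(s)^{-1} = b$ identically. Hence
\[
\tDS_\tau(\zeta) = \frac{1}{2\pi i}\int_0^1 \tau(b)\,ds = \frac{1}{2\pi i}\,\tau(b),
\qquad
\|\tDS_\tau(\zeta)\| \le \frac{\|\tau\|\,\|b\|}{2\pi} < \epsilon .
\]
Let $\eta : [t_0,t_1] \to GL_n^0(\A)$ be the concatenation of $\xi$ with $\zeta$ (after an affine change of parameter on each half of $[t_0,t_1]$); it is piecewise continuously differentiable, $\eta(t_0) = \xi(t_0) = 1$ and $\eta(t_1) = x$. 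By additivity and reparametrization-invariance, $\tDS_\tau(\eta) = \tDS_\tau(\xi) + \tDS_\tau(\zeta)$, so $\|\tDS_\tau(\xi) - \tDS_\tau(\eta)\| = \|\tDS_\tau(\zeta)\| < \epsilon$, as required.

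For the unitary case one argues identically, replacing $\log$ by the principal branch of $\tfrac1i\log$ on $\T$: for $x \in U_n^0(\A)$ with $\|x - \xi(t_1)\| < \delta$ the unitary $u =_{df} x\xi(t_1)^*$ has $\|u - 1\| = \|x-\xi(t_1)\| < \delta$, hence spectrum in a small arc about $1$, so continuous functional calculus produces a self-adjoint $h \in \M_n(\A)$ with $\|h\|$ as small as desired (quantitatively controlled by $\delta$) and $e^{ih} = u$; then $\zeta(s) =_{df} e^{ish}\xi(t_1)$ lies in $U_n^0(\A)$, has $\zeta'(s)\zeta(s)^{-1} = ih$, and $\tDS_\tau(\zeta) = \tfrac{1}{2\pi}\tau(h)$ has norm $< \epsilon$ once $\delta$ is small enough, and the rest is as before. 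There is no serious obstacle here: the two points to get right are (i) that the spliced curve stays inside $GL_n^0(\A)$ (resp.\ $U_n^0(\A)$) and remains piecewise $C^1$, which is automatic since $e^{sb}$ (resp.\ $e^{ish}$) lies in $GL_n^0(\A)$ (resp.\ $U_n^0(\A)$) and $\xi$ does too; and (ii) the quantitative estimates $\|b\| \to 0$ and $\|h\| \to 0$ as $\delta \to 0$, which are just the continuity of the holomorphic, resp.\ continuous, logarithm near the identity. The single computation worth isolating is the cancellation $\zeta'(s)\zeta(s)^{-1} = b$ (resp.\ $=ih$) for $\zeta(s) = e^{sb}g$ with $g$ a fixed invertible, which is exactly what makes $\tDS_\tau$ of the tail linear in the small element $b$ (resp.\ $h$) and hence controllable.
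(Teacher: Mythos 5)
Your proof is correct; the paper simply labels this lemma ``a straightforward computation'' and gives no proof, and your argument --- splicing a short exponential tail $\zeta(s) = e^{sb}\xi(t_1)$ (resp.\ $e^{ish}\xi(t_1)$) onto $\xi$, exploiting the cancellation $\zeta'(s)\zeta(s)^{-1} = b$, and invoking additivity under concatenation and reparametrization invariance of $\tDS_\tau$ --- is precisely the intended computation, with the correct quantitative control on $\|b\|$ (resp.\ $\|h\|$) coming from the continuity of the logarithm near the identity.
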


\iffalse

%\begin{proof}
%  We prove the lemma for $GL_{n}^0 (\A)$.  The proof for the
%case of 
%$U_{n}^0 (\A)$ (when $\A$ is, additionally, a C*-algebra) is similar.  

%   Since $\tau$ is continuous at $0$, let $\delta_1 > 0$ be such that 
%for all $a \in \M_n(\A)$, if $\| a \| < \delta_1$ then
%$\| \tau(a) \| < \epsilon$.

%   Note that the principal logarithm $Log$ can be defined as a continuous
%function on 
%a neighbourhood of $1 = 1_{GL_n^0(\A)}$ by the power 
%series  
%expansion $Log(x) =_{df} \sum_{k=0}^{\infty} (1 - x)^n$ for all
%$x \in GL_n^0(\A)$ with $\| x - 1 \| < 1$.
%Hence, choose $\delta_2 > 0$ with $\delta_2 < 1$ such that for all
%$x \in GL_n^0(\A)$ with $\| x - 1 \| < \delta_2$, 
%$\| Log(x) \| < \delta_1$.
%Finally, choose $\delta > 0$ such that for all $x \in GL_n^0(\A)$,
%if $\| \xi(t_1) - x \| < \delta$ then $\| 1 - x  \xi(t_1)^{-1} \| < \delta_2$.
%We now prove that this $\delta$ works.

%So suppose that $x \in GL_n^0(\A)$ is such that  
%$\| \xi(t_1) - x \| < \delta$. 
%By our choice of $\delta$,
%$\| \xi(t_1)^{-1} x - 1 \| < \delta_2 < 1$.
%Hence, let $a =_{df} Log(\xi(t_1)^{-1} x) \in M_n(\A)$ and let 
%$\eta : [t_0, t_1 + 1] \rightarrow GL_n^0(\A)$   
%be the piecewise continuously  differentiable map that is given by:
%\[
%\eta(t) = 
%\begin{cases}
%\xi(t) & \makebox{  if  } t \in [t_0, t_1]\\
%\xi(t_1) e^{a(t- t_1)} &  \makebox{  if  } t \in [t_1, t_1+1]
%\end{cases} 
%\]
%for $t \in [t_0, t_1 + 1]$.

%Hence, $\eta(t_0) = \xi(t_0) = 1$ and $\eta(t_1+1) = x$.

%Moreover, 
%\begin{eqnarray*}
%& & \tDS_{\tau}(\eta) \\
%& =& \int_{t_0}^{t_1+1} \tau(\eta'(t) \eta(t)^{-1}) dt \\
%& = & \int_{t_0}^{t_1} \tau(\eta'(t) \eta(t)^{-1}) dt + 
%\int_{t_1}^{t_1 +1} \tau(\eta'(t) \eta(t)^{-1}) \\
%& = & \int_{t_0}^{t_1} \tau(\xi'(t) \xi(t)^{-1}) dt +
%\int_{t_1}^{t_1 +1} \tau(\xi(t_1) e^{a(t- t_1)}a 
%e^{-a(t- t_1)}\xi(t_1)^{-1}) dt \\ 
%& = & \tDS(\xi) + \int_{t_1}^{t_1 +1} \tau(a) dt \\
%& = & \tDS(\xi) + \tau(a)
%\end{eqnarray*}
 
%But by the definition of $a$, we have that $\| e^a - 1 \| < \delta_2$.i
%Hence, by our choice of $\delta_2$, 
%$\| Log(e^a) \| < \delta_1$; i.e., 
%$\| a \| < \delta_1$.
%Hence, by our choice of $\delta_1$, 
%$\| \tau(a) \|  < \epsilon$.
%Hence, 
%$$\| \tDS_{\tau}(\eta) - \tDS_{\tau}(\xi) \|  < \epsilon$$
%as required.  
%\end{proof}

\fi

For a unital C*-algebra $\A$, recall that $E_u$ is the Banach space
quotient $E_u =_{df} \A/ \overline{[\A, \A]}$.
Viewing $E_u$ as a metric group (with metric induced by the norm),
$T(K_0(\A)) \subseteq E_u$ is a (not necessarily closed) topological
subgroup of $E_u$.
The metric on $E_u$ induces a pseudometric $d$ on the quotient
group $E_u/ T(K_0(\A))$.
I.e., for all $a, b \in E_u$,
$$d([a],[b]) =_{df} inf \{ \| a - b + c \| : c \in T(K_0(\A)) \},$$ 
where $\| . \|$ is the norm on $E_u$ and
$[a], [b]$ are the equivalence classes of $a, b$ (respectively)
in $E_u/T(K_0(\A))$.

\begin{lem} 
Let $\A$ be a unital C*-algebra and let $d$ be the pseudometric on 
$E_u/T(K_0(\A))$ induced by the metric (or norm) on $E_u$. 
Let $y \in GL^0_{\infty}(\A)$ (resp. $U^0_{\infty}(\A)$)  be such that
$\Delta_T(y) = 0$.

Then for every $\epsilon > 0$, there exists $\delta > 0$
such that 
if $x \in GL^0_{\infty}(\A)$ (resp. $U^0_{\infty}(\A)$) 
is such that $\| x - y \| < \delta$ then 
$$d( \Delta_T(y),0)  < \epsilon$$
\label{lem:DeterminantContinuityAt0} 
\end{lem}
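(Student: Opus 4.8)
The plan is to reduce the statement to Lemma~\ref{lem:DeterminantContinuityPart1} together with the path-description of the determinant. I will treat the invertible case; the unitary case is identical after replacing $GL$ by $U$ throughout and invoking the corresponding half of Lemma~\ref{lem:DeterminantContinuityPart1}. (Note that the conclusion we actually prove is $d(\Delta_T(x),0) < \epsilon$; the quantity $d(\Delta_T(y),0)$ is already $0$ by hypothesis.)

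First I would fix, once and for all, a piecewise continuously differentiable curve $\xi : [0,1] \to GL^0_{\infty}(\A)$ with $\xi(0) = 1$ and $\xi(1) = y$; such a curve exists because $y$ lies in the connected component of the identity, which is piecewise-smoothly path connected. By the construction of $\Delta_T$ from $\widetilde{\Delta}_T$ recalled in the Introduction (and the homotopy invariance of $\widetilde{\Delta}_\tau$), the hypothesis $\Delta_T(y) = 0$ says precisely that $\widetilde{\Delta}_T(\xi) \in T(K_0(\A))$ as an element of $E_u$.

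Now let $\epsilon > 0$ be given. I would apply Lemma~\ref{lem:DeterminantContinuityPart1} with $\tau = T$, $V = E_u$, $n = \infty$, and the curve $\xi$ above: this produces $\delta > 0$ such that whenever $x \in GL^0_{\infty}(\A)$ satisfies $\| x - \xi(1) \| = \| x - y \| < \delta$, there is a piecewise continuously differentiable curve $\eta : [0,1] \to GL^0_{\infty}(\A)$ with $\eta(0) = 1$, $\eta(1) = x$, and $\| \widetilde{\Delta}_T(\xi) - \widetilde{\Delta}_T(\eta) \| < \epsilon$ in $E_u$.

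Finally, for such an $x$ we have $\Delta_T(x) = [\widetilde{\Delta}_T(\eta)]$ in $E_u / T(K_0(\A))$, and since $-\widetilde{\Delta}_T(\xi) \in T(K_0(\A))$, the definition of the pseudometric $d$ gives
$$ d(\Delta_T(x),0) \;=\; \inf_{c \in T(K_0(\A))} \| \widetilde{\Delta}_T(\eta) + c \| \;\leq\; \| \widetilde{\Delta}_T(\eta) - \widetilde{\Delta}_T(\xi) \| \;<\; \epsilon, $$
which is the desired conclusion. The only step that needs any care is the translation of the hypothesis $\Delta_T(y)=0$ into the concrete membership $\widetilde{\Delta}_T(\xi)\in T(K_0(\A))$ for a fixed path $\xi$; once that path is in hand, the argument is a direct application of Lemma~\ref{lem:DeterminantContinuityPart1} followed by the definition of $d$, so I do not anticipate a serious obstacle — this lemma is essentially Lemma~\ref{lem:DeterminantContinuityPart1} repackaged for the quotient metric on $E_u/T(K_0(\A))$.
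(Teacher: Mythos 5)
Your argument is correct and takes essentially the same route as the paper's: both reduce the lemma to Lemma~\ref{lem:DeterminantContinuityPart1} applied to a piecewise differentiable path from $1$ to $y$. The only difference is cosmetic --- the paper first produces a path $\xi$ with $\widetilde{\Delta}_T(\xi) = 0$ exactly in $E_u$, while you work with an arbitrary path and absorb the element $\widetilde{\Delta}_T(\xi) \in T(K_0(\A))$ into the infimum defining the pseudometric $d$, which is marginally more elementary --- and you correctly observe that the conclusion should read $d(\Delta_T(x),0) < \epsilon$ rather than the vacuously true $d(\Delta_T(y),0) < \epsilon$.
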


\begin{proof}

Since $\Delta_T(y) = 0$, there exists a piecewise continuously
differentiable curve 
$\xi : [0,1] \rightarrow GL^0_{\infty}(\A)$  ($U^0_{\infty}(\A)$ resp.)
such that $\xi(0) = 1$, $\xi(1) = y$ and 
$\widetilde{\Delta}_T(\xi) = 0$.  
Now apply Lemma \ref{lem:DeterminantContinuityPart1}. 
\end{proof}

Next, we consider some results about the closure of the commutator
subgroup.   
For a topological group $G$,
recall that $DG$ is the commutator subgroup of $G$ and $\overline{DG}$ is its
closure.  For a unital C*-algebra $\A$, $\overline{D U(\A)}$
and $\overline{D U^0(\A)}$ will be the closures in the \emph{norm} topology. 

\begin{prop} Let $\A$ be a unital simple separable TAI-algebra.

Then $$\overline{D U(\A)} = \overline{D U^0(\A)}$$
%and 
%$$\overline{D GL(\A)} = \overline{D GL^0(\A)}.$$ 

\end{prop}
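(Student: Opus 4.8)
The plan is to show both containments. The inclusion $\overline{DU^0(\A)} \subseteq \overline{DU(\A)}$ is trivial since $U^0(\A) \subseteq U(\A)$ forces $DU^0(\A) \subseteq DU(\A)$. So the real content is $\overline{DU(\A)} \subseteq \overline{DU^0(\A)}$, and it suffices to show every single multiplicative commutator $(u,v)$ with $u, v \in U(\A)$ lies in $\overline{DU^0(\A)}$; the closed subgroup generated by these then contains $DU(\A)$, and taking closures finishes the argument.

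So fix $u, v \in U(\A)$. The key structural fact I would invoke is that a unital simple separable TAI-algebra has stable rank one, and in particular the unitary group modulo its connected component injects into $K_1(\A)$ — more precisely $U(\A)/U^0(\A) \cong K_1(\A)$. The obstruction to $(u,v)$ lying in $U^0(\A)$-commutators is therefore $K_1$-theoretic: $[(u,v)]_{K_1} = [u] + [v] - [u] - [v] = 0$, so $(u,v) \in U^0(\A)$ automatically. Thus each commutator $(u,v)$ already sits in $U^0(\A)$; the issue is only whether it can be approximated by products of commutators \emph{of elements of $U^0(\A)$}. For this I would use the TAI structure directly: given $\epsilon > 0$ and the finite set $\{u, v\}$, Definition \ref{df:TAI} produces a projection $p$ almost commuting with $u, v$, with $p u p, p v p$ approximately in some $\I \in \INT$, and $\tau(1-p)$ uniformly small. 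Polar-decompose $pup$ and $pvp$ inside $\I$ to get unitaries $u_0, v_0 \in \I$ with $u_0 \oplus (1-p)$, $v_0 \oplus (1-p)$ close to $u, v$; since $\I$ is a finite direct sum of matrix algebras and interval algebras, every unitary in $\I$ that lies in $U^0(\I)$ is a product of commutators of $U^0(\I)$ (for the matrix summands $\M_k$ this is classical — every element of $SU(k)$, hence $U^0$ up to a scalar handled separately, is a commutator; for $\M_k(C[0,1])$ summands one diagonalizes and applies Lemma \ref{lem:INTDiagonal} or its invertible analogue). The scalar/determinant part of $u_0, v_0$ in each summand is killed precisely because we are looking at a commutator $(u,v)$, whose image under the determinant-type invariant $\Delta_T$ vanishes.

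The main obstacle I anticipate is the bookkeeping of the $\INT$-summands and the complement: one must check that after cutting by $p$, the ``error term'' on $1-p$ contributes negligibly, which is where the condition $\tau(1-p) < \epsilon$ for all $\tau \in T(\A)$ and the property (SP)/strict comparison of the TAI-algebra enter — the corner $(1-p)\A(1-p)$ has small trace, so any unitary supported there can be absorbed into a commutator using a Halving-type trick (write $1-p$ as a sum of two equivalent subprojections and use that a unitary $w$ on a halvable projection is a commutator $(s, \cdot)$ where $s$ is the symmetry swapping the halves). Combining: $u \approx u_0 + w_1$, $v \approx v_0 + w_2$ with $u_0, v_0$ honestly handled in $\I$ and the $w_i$-corrections controlled, so $(u,v)$ is $\epsilon$-close to a product of commutators in $U^0(\A)$. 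Letting $\epsilon \to 0$ gives $(u,v) \in \overline{DU^0(\A)}$, completing the proof. I would organize the write-up as: (i) reduce to showing each commutator is in $\overline{DU^0(\A)}$; (ii) the TAI cutting-down step; (iii) the $\INT$ computation via the diagonal lemmas; (iv) the small-corner absorption; (v) pass to the limit.
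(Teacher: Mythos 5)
Your reduction to showing that each single commutator $(u,v)$ with $u,v\in U(\A)$ lies in $\overline{DU^0(\A)}$ is exactly right, and the TAI cut-down that produces $p$, $\I\in\INT$, and block-diagonal approximations $u\approx u_0\oplus w_1$, $v\approx v_0\oplus w_2$ is a sensible first move. But the plan then breaks down, and for a structural reason: cutting down does not touch the $K_1$-classes of the \emph{factors} of the commutator. The unitaries $u_0,v_0\in\I$ land in $U^0(\I)$ automatically (since $K_1(\I)=0$), but the corner pieces $w_1=(1-p)u(1-p)$, $w_2=(1-p)v(1-p)$ still carry whatever nontrivial classes $[u],[v]\in K_1(\A)$ had. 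Your halving remark does not rescue this. The halving identity gives $(s,\,w\oplus 1)=w^*\oplus sw s^*$, not $w$, and more importantly the factor $w\oplus 1\oplus p$ has the same $K_1$-class as $w$ and hence is not in $U^0(\A)$; so even the halving produces commutators whose entries lie outside $U^0(\A)$, which is precisely the thing you were trying to avoid. Your observation that $[(u,v)]_{K_1}=0$ is true but beside the point --- the issue is never whether the commutator itself is in $U^0(\A)$, it is whether one can arrange the two \emph{entries} to be.

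What the paper does differently, and what you are missing, is a genuine absorption of the $K_1$-classes of the commutator entries. After approximating $y$ by $u_1\oplus u'_2$ where $u'_2=\sum\alpha_{i,j}p_{i,j}$ is a finite diagonal, it picks a single nonzero diagonal projection $p_{1,1}$, uses TAI plus $K_1$-injectivity to produce a unitary $w\in p_{1,1}\A p_{1,1}$ with $\widetilde w := w+(1-p_{1,1})$ homotopic to $x$, and observes that $\widetilde w$ \emph{commutes with} $u_1\oplus u'_2$ (because $u'_2$ restricted to $p_{1,1}$ is the scalar $\alpha_{1,1}$). Therefore
\[
(x,\,u_1\oplus u'_2)=(x\widetilde w^{*},\,u_1\oplus u'_2),
\]
and now $x\widetilde w^{*}\in U^0(\A)$. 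The same maneuver is then applied once more, to the other entry, to push it into $U^0(\A)$ as well. This is where the cancellation $[u]+[v]-[u]-[v]=0$ is actually used: it is what lets you hide each class inside a tiny corner that commutes with the other entry, without changing the commutator. Without such a step your cut-down decomposition merely relocates, and does not eliminate, the obstruction, and the argument does not close.
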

\begin{proof}
It suffices to prove the following:  Let $x, y \in U(\A)$.
Then $(x,y) \in \overline{D U^0(\A)}$.

Let $\epsilon > 0$ be given. 
Contracting $\epsilon$ if necessary, we may assume that
$\epsilon < 1/10$.

Since $\A$ is TAI, let $p \in \A$ be a projection with $\tau(1 - p) < 1/10$
for all $\tau \in \A$, let 
$\I \in \INT$ be a C*-subalgebra of $\A$ with $1_{\I} = p$,   
let $u_1  \in (1-p)\A (1-p)$ and $u_2 \in \I$ be unitaries 
(in their respective
C*-subalgebras) such that 
$\| y - (u_1 \oplus u_2) \| < \epsilon/10$, 
and
\begin{equation}
\| x y x^* y^* -  x (u_1 \oplus u_2)  x^* (u_1^* \oplus u_2^*)
 \| 
< \epsilon/10. 
\end{equation}

Since $\I \in \INT$, there exist $k, l \geq 0$ and positive
integers $n_1, n_2, ..., n_k, m_1, m_2, ..., m_l \geq 1$ such that
$\I = \bigoplus_{i=1}^k \M_{n_i}(C[0,1]) 
\oplus \bigoplus_{j=1}^l \M_{m_{j}}$.

For simplicity, let us assume that $k \geq 1$ and $l=0$.  The proofs
for the other cases are similar (and sometimes easier).

By \cite{ThomsenCircle} Lemma 1.9, for 
$1 \leq i \leq k$ and $1 \leq j \leq n_i$,
let $\alpha_{i,j} : [0,1] \rightarrow \mathbb{T}$ be a continuous map,
and let $\{ p_{i,j} \}_{1 \leq i \leq k, \makebox{  } 1 \leq j \leq n_i}$
be a collection of nonzero pairwise orthogonal projections in $\I$ such that 
$u'_2 =_{df} \sum_{1 \leq i \leq k, \makebox{  } 1 \leq j \leq n_i}
\alpha_{i,j} p_{i,j} \in \I$ and 
$$\| u'_2 - u_2 \| < \epsilon/10.$$
  
Hence,
\begin{equation}
\| xyx^*y^* -  x (u_1 \oplus u'_2) x^* (u_1^* \oplus {u'_2}^*) 
\| < 3 \epsilon/10. 
\label{Jan19.2012}
\end{equation}
 
Since $k \geq 1$, $p_{1,1}$ is a nonzero projection. 

Since $\A$ is TAI, there exists a  unitary $w \in p_{1,1} \A p_{1,1}$
such that 
$\widetilde{w} =_{df} w + (1_{\A} - p_{1,1})$ is homotopy equivalent to 
$x$ in $U(\A)$.
(Sketch of 
construction of $w$:  Since $\A$ is TAI and since $C(\mathbb{T})$ (the
universal C*-algebra generated by a unitary) is semiprojective,
we can find projection $r \in \A$ with $r \prec p_{1,1}$, a C*-subalgebra
$\B \subset \A$ with $\B \in \INT$ and $1_{\B} = 1 - r$, and unitaries
$w'' \in U_0(r \A r)$ and $w''' \in \B$ with 
$\| x - (w'' \oplus w''') \| < 1/10$.
Then $x$ is homotopy equivalent to $w'' \oplus w'''$ which in turn is
homotopy equivalent to $w'' \oplus (1-r)$.
Since $r \prec p_{1,1}$ and since $\A$ has cancellation of projections 
(\cite{LinTAI} Corollary 4.6), we can conjugate 
$w'' \oplus (1-r)$ by a unitary to get $w + (1 - p_{1,1})$. But since 
$\A$ has stable rank one (\cite{LinTAI} Theorem 4.5), $\A$ is $K_1$-injective.
Hence, $w'' \oplus (1-r)$ (and hence $x$) is homotopy equivalent to 
$w + (1 - p_{1,1})$.) 

Note that $u_1 \oplus u'_2 = \widetilde{w}^* (u_1 \oplus u'_2) \widetilde{w}$.
 
Hence, 
\begin{eqnarray*} 
& & x (u_1 \oplus u'_2)  x^* (u_1^* \oplus {u'_2}^*)\\ 
& = & x  \widetilde{w}^*   (u_1 \oplus u'_2) 
 \widetilde{w} x^* (u_1^* \oplus {u'_2}^*) \\ 
& = & ( x \widetilde{w}^*, u_1 \oplus u'_2) 
\end{eqnarray*}

But $z =_{df} x \widetilde{w}^* \in U^0(\A)$.
From this and (\ref{Jan19.2012}), we have that 
$z \in U^0(\A)$ and
\begin{equation}
\| (x,y) - (z, u_1 \oplus u'_2) \| < 3 \epsilon/10
\label{Jan19.2012.6pm}
\end{equation}

Again, since $\A$ is TAI, let $q \in \A$ be a projection with 
$\tau(1- q) < \epsilon/10$ for all $\tau \in T(\A)$,  let 
$\J \in \INT$ be a C*-subalgebra of $\A$ with $1_{\J} = q$,   
and let $v_1 \in (1-q) \A (1- q)$, $v_2 \in \J$ be unitaries (in their
respective C*-algebras) such that 
$\| z - v_1 \oplus v_2 \| < \epsilon/10$.
(note that this implies that $v_1 \oplus v_2 \in U^0(\A)$)
and 
$$\| (z, u_1 \oplus u'_2) - (v_1 \oplus v_2, u_1 \oplus u'_2)  \| < 
\epsilon/10.$$

By applying an argument similar to the one for constructing 
$z$, we can find $z' \in U^0(\A)$ such that 
$(v_1 \oplus v_2, u_1 \oplus u'_2) = (v_1 \oplus v_2, z')$.
From the above and (\ref{Jan19.2012.6pm}),
$$\| (x,y) - (v_1 \oplus v_2, z') \| < 4 \epsilon/10.$$
Hence, 
$dist((x,y), D U^0(\A)) < 4\epsilon/10$.
Since $\epsilon > 0$ was arbitrary,
we have that $(x,y) \in \overline{D U^0(\A)}$ as required.  
 \end{proof} 

\begin{lem}
Let $\A$ be a unital separable simple TAI-algebra.
For every $\epsilon >0$, there exists $\delta > 0$
such that for every self-adjoint element $a \in \A$ with  
$|\tau(a)| < \delta$ for all $\tau \in T(\A)$,
$$dist(e^{i 2 \pi a}, \overline{DU^0(\A)}) =_{df} 
\inf \{ \| e^{i2 \pi a} - u \| : u \in \overline{DU^0(\A)} \}
< \epsilon.$$
\label{lem:DistToDU}
\end{lem}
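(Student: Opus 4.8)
\emph{Plan.}
The plan is to use the TAI structure to write $e^{i2\pi a}$, up to a small norm error, as a product of a unitary supported on an $\INT$-subalgebra and a unitary supported on a corner of arbitrarily small trace, and to show that each factor lies close to $DU^0(\A)$. First I would fix, in terms of $\epsilon$, small auxiliary tolerances and then $\delta>0$. Given a self-adjoint $a$ with $|\tau(a)|<\delta$ for all $\tau\in T(\A)$, I would invoke Definition \ref{df:TAI}, Remark \ref{rem:TAIMatrixSize} and (the proof of) Lemma \ref{lem:AsymptoticSubalgebras} to produce a projection $p$ with $\sup_{\tau\in T(\A)}\tau(1-p)$ as small as desired (in particular small relative to $\|a\|$), a subalgebra $\I\in\INT$ with $1_\I=p$ whose matrix summands are all large, and a self-adjoint $a_1\in\I$ with $\|a-(a_1+a_0)\|$ small, where $a_0:=(1-p)a(1-p)$, and with $|\tau(a_1)|<\delta$ for every $\tau\in T(\I)$ (this last by Lemma \ref{lem:AsymptoticSubalgebras}, taking $\I$ far enough along an approximating sequence; the smallness of $\|a-(a_1+a_0)\|$ uses that $\|pa-ap\|$ is small, so $a$ is close to its block-diagonal part $pap+(1-p)a(1-p)$, and $pap\approx a_1$). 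Since $a_1,a_0$ are supported on the complementary projections $p,1-p$, one has $e^{i2\pi(a_1+a_0)}=e^{i2\pi a_1}e^{i2\pi a_0}$, so $e^{i2\pi a}$ is norm-close to $\tilde v_1\tilde v_0$ with $\tilde v_1:=e^{i2\pi a_1}\in U^0(\A)$ (equal to $1$ on $(1-p)\A(1-p)$) and $\tilde v_0:=e^{i2\pi a_0}\in U^0(\A)$ (equal to $1$ on $p\A p$); it then remains to estimate $dist(\tilde v_j,DU^0(\A))$ for $j=0,1$.

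For $\tilde v_1$: on each summand $\M_n(C[0,1])$ (or $\M_n$) of $\I$ I would approximately diagonalize the corresponding block $b$ of $a_1$, writing it within a small error of $w^*diag(\phi_1,\dots,\phi_n)w$ with $w$ unitary and $\phi_k:[0,1]\to\mathbb{R}$ continuous (self-adjoint elements of $\M_n(C[0,1])$ are approximately diagonalizable). Setting $c:=\tfrac1n\sum_k\phi_k$ and $\psi_k:=\phi_k-c$, so $\sum_k\psi_k\equiv 0$, one has $e^{i2\pi b}\approx w^*\big(e^{i2\pi c}1_n\big)diag(e^{i2\pi\psi_1},\dots,e^{i2\pi\psi_n})w$ up to a small error. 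By Lemma \ref{lem:INTDiagonal}(1) (after a constant permutation ordering the $\psi_k$), $diag(e^{i2\pi\psi_1},\dots,e^{i2\pi\psi_n})$ is a product of $16$ commutators in the identity component of the unitary group of that summand, hence so is its conjugate by $w$; and $e^{i2\pi c}1_n$ is within $2\pi\|c\|_\infty<2\pi\delta$ of $1$, since $c(s)$ is (up to the diagonalization error) the value of a tracial state of $\I$ on $a_1$. Assembling over the summands, $\tilde v_1$ is within a controllable small distance of a product of commutators in $U^0(\A)$, i.e.\ of $DU^0(\A)$.

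The factor $\tilde v_0=e^{i2\pi a_0}$ is the delicate one, and the main obstacle: a priori it is a unitary, far from $1$, supported on the small projection $1-p$, and because of the de la Harpe--Skandalis determinant it is simply false in general that such a unitary is close to a commutator. The idea is an amplification exploiting the smallness of $1-p$. Choose $M$ with $(M+1)\sup_\tau\tau(1-p)<1$ and $2\pi\|a\|/(M+1)<\epsilon_0$ (possible since $1-p$ was taken small relative to $\|a\|$), and mutually orthogonal projections $1-p=e^{(0)}\sim e^{(1)}\sim\dots\sim e^{(M)}$ in $\A$ (by strict comparison and cancellation, \cite{LinTAI}); with $f:=\sum_j e^{(j)}$, identify $f\A f\cong\M_{M+1}\big((1-p)\A(1-p)\big)$. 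Under this identification $\tilde v_0f$ corresponds to $diag(e^{i2\pi a_0},1,\dots,1)=e^{i2\pi diag(a_0,0,\dots,0)}$, and I split $diag(a_0,0,\dots,0)=\tfrac1{M+1}a_0\otimes 1_{M+1}+diag\big(\tfrac{M}{M+1}a_0,-\tfrac1{M+1}a_0,\dots,-\tfrac1{M+1}a_0\big)$, a sum of two commuting self-adjoints. The first gives $e^{i2\pi a_0/(M+1)}\otimes 1_{M+1}$, within $2\pi\|a_0\|/(M+1)\le 2\pi\|a\|/(M+1)<\epsilon_0$ of $1$. The second gives $diag(v^{-M},v,\dots,v)$ with $v:=e^{-i2\pi a_0/(M+1)}\in U^0\big((1-p)\A(1-p)\big)$, and this equals $\prod_{k=2}^{M+1}\big(t_{1k},\,diag(v,1,\dots,1)\big)$, a product of $M$ multiplicative commutators in $U^0\big(\M_{M+1}((1-p)\A(1-p))\big)\subseteq U^0(\A)$, where $t_{1k}$ is the transposition permutation unitary of slots $1$ and $k$ (both $t_{1k}$ and $diag(v,1,\dots,1)$ lie in the identity component, the latter because $v$ is an exponential). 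Hence $\tilde v_0$ is within $\epsilon_0$ of $DU^0(\A)$.

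Combining, $e^{i2\pi a}$ is within the sum of the (separately controllable) errors above of $DU^0(\A)\subseteq\overline{DU^0(\A)}$, and by choosing the tolerances and $\delta$ at the outset this sum is less than $\epsilon$. The points needing care are: arranging in the first step that $\I$ simultaneously has large matrix summands, has $1-p$ of trace small relative to $\|a\|$, and obeys the conclusion of Lemma \ref{lem:AsymptoticSubalgebras}, all with $\delta$ chosen before $a$; and the corner amplification of the preceding paragraph, which is where the fact that $1-p$ can be made arbitrarily small is used essentially — and where, by contrast, no control on $\tau(a_0)$ is needed (the hypothesis $|\tau(a)|<\delta$ enters only through the scalar parts $c$ of the $\INT$-factor).
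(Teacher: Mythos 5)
Your argument is correct in substance, but it takes a genuinely different route from the paper's. The paper proves this lemma in one line by invoking the main theorem of \cite{ThomsenExactSequence}, which produces a \emph{topological} group isomorphism $U^0(\A)/\overline{DU^0(\A)} \cong Aff(T(\A))/\overline{T(K_0(\A))}$ induced by the de la Harpe--Skandalis determinant; under this isomorphism $[e^{i2\pi a}]$ corresponds to the class of $\hat a(\tau)=\tau(a)$, so the lemma is exactly continuity of the inverse isomorphism at the identity. Your proof is instead a direct construction: decompose $e^{i2\pi a}$ (up to small error) as a block in an $\INT$-subalgebra $\I=p\A p$ times a block in the small corner $(1-p)\A(1-p)$; handle the $\I$-block by approximate diagonalization (\cite{ThomsenCircle} Lemma 1.9) and Lemma \ref{lem:INTDiagonal}(1) after peeling off the scalar part $c$, whose smallness is exactly what the hypothesis $|\tau(a)|<\delta$ controls via Lemma \ref{lem:AsymptoticSubalgebras}; and handle the $(1-p)$-block by the amplification trick inside $\M_{M+1}\big((1-p)\A(1-p)\big)$, using the identity $\mathrm{diag}(v^{-M},v,\dots,v)=\prod_{k=2}^{M+1}\big(t_{1k},\mathrm{diag}(v,1,\dots,1)\big)$ (which I checked and is correct, including that all the entries lie in the appropriate $U^0$'s since permutation matrices and exponentials are connected to the identity). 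The amplification step is the right move and genuinely resolves the hard point — the hypothesis gives no control at all on $\tau(a_0)$, only on $\tau(1-p)$ — and your bookkeeping of what depends on $\epsilon$ only (namely $\delta$) versus what may depend on $a$ (the choice of $p$, $\I$, $M$, and the approximation tolerances) is sound. What your approach buys is self-containedness (no reliance on the deep theorem of \cite{ThomsenExactSequence}) and an explicit approximating product; note in particular that you could \emph{not} have shortened this by citing Lemma \ref{lem:18CommutatorsApprox}, both because that lemma requires $\Delta_T(u)=0$ exactly rather than approximately, and because its proof in turn invokes the present Lemma \ref{lem:DistToDU}, so your route has the additional virtue of avoiding circularity. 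The tradeoff is length: the paper's proof is a single citation.
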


\begin{proof}
This follows from \cite{ThomsenExactSequence}
which gives a topological group isomorphism:
\[
\Phi : U^0(\A) / \overline{DU^0(\A)} \rightarrow Aff(T(\A)) / \overline{K_0(\A)}. 
\]
The map $\Phi$ is the map induced by the de la Harpe--Skandalis determinant
(with universal trace).
Note that for a self-adjoint $a \in \A$,
$\Phi ( [ e^{i 2 \pi a} ] )  = a + \overline{T(K_0(\A))}$.
\end{proof}

We will need a uniqueness result of Lin's.  Towards this, we 
fix some notation.  
For a unital C*-algebra $\A$ and for a unitary $u \in U(\A)$, 
let $\overline{u}$ denote the image of $u$ in $U(\A)/\overline{DU(\A)}$. 
For $\overline{u}, \overline{v} \in U(\A)/\overline{DU(\A)}$,  
let 
$$dist(\overline{u}, \overline{v}) =_{df} inf \{ \| x - y \| : 
x,y \in U(\A) \makebox{  and  } \overline{x} = \overline{u},
\overline{y} =  \overline{v} \}.$$  
It follows that 
$$dist(\overline{u}, \overline{v}) = inf \{ \| uv^* - x \| :
x \in \overline{DU(\A)} \} = dist(uv^*, \overline{DU(\A)}).$$
If $\A$, $\B$ are unital C*-algebras and
$\phi : \A \rightarrow \B$ is a unital *-homomorphism, then
$\phi$ brings $U(\A)$ to $U(\B)$, and brings 
$\overline{DU(\A)}$ to $\overline{DU(\B)}$.
Hence, $\phi$ induces a topological group homomorphism
$\phi^{\ddagger} : U(\A)/\overline{DU(\A)} \rightarrow
U(\B)/\overline{DU(\B)}$.  Also, $\phi$ induces a map
$[\phi] : \underline{K}(\A) \rightarrow \underline{K}(\B)$.
(Here, $\underline{K}$ is total K-theory.  See, for example,
\cite{Linbook} Definition 5.8.13.)
Finally, if $X$ is a compact metric space and $\tau \in T(C(X))$ (tracial 
state) then, by the Riesz Representation Theorem, $\tau$ induces 
a Borel probability measure $\mu_{\tau}$ on $X$.

The following is a result of Lin in \cite{LinTAIAUE}. (Also, a 
generalized version, with the space $X$ being an arbitrary 
compact metric space, can be found in \cite{LinMostRecentAUE}).

\begin{thm}  Let $X$ be a compact metric space such that 
either $X$ is a finite CW-complex with dimension no more than one or 
$X = [0,1]^n$ (n-cube) or $X = \T^n$ (n-torus).  
Let $\epsilon > 0$, let $\F \subset C(X)$ be a finite subset and
let $F : (0,1) \rightarrow (0,1)$ be a nondecreasing map.  
Then there exist $\eta > 0$, $\delta > 0$, a finite subset 
$\G \subset C(X)$, a finite subset $\mathcal{P} \subset \underline{K}(C(X))$
and a finite subset $\mathcal{U} \subset U(\M_{\infty}(C(X)))$ satisfying
the following:

Suppose that $\A$ is a unital separable simple TAI-algebra and 
$\phi, \psi : C(X) \rightarrow \A$ are two unital *-homomorphisms such 
that 
$$\mu_{\tau \circ \phi}(O_s) \geq F(s)$$
for all $s \geq \eta$, for all open balls $O_s$ in $X$ with
radius $s$ and all $\tau \in T(\A)$; 
$$|\tau \circ \phi(g) - \tau \circ \psi(g) | < \delta$$
for all $g \in \G$ and all $\tau \in T(\A)$; and  
$$[\phi] |_{\mathcal{P}} = [\psi]|_{\mathcal{P}} \makebox{  and  }
dist(\phi^{\ddagger}(\overline{z}), \psi^{\ddagger}(\overline{z})) < \delta$$
for all $z \in \mathcal{U}$.

Then there exists a unitary $u \in \A$ such that
$$\| \phi(f) - u \psi(f) u^* \| < \epsilon$$
for all $f \in \F$.  
\label{thm:LinAUE} 
\end{thm}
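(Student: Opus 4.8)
The plan is to reduce the statement to uniqueness theorems for $*$-homomorphisms into the building blocks of $\INT$ and then patch, exploiting the tracial interval structure of $\A$; the finite sets and tolerances would be fixed only at the end of the argument, but I describe the steps in logical order. First I would use that $\A$ is TAI (Definition \ref{df:TAI}) together with Remark \ref{rem:TAIMatrixSize} to choose a projection $p \in \A$ and a subalgebra $\I \in \INT$ with $1_{\I} = p$ whose matrix blocks are as large as we like, such that $\tau(1-p)$ is uniformly as small as we like over $\tau \in T(\A)$, $p$ almost commutes with $\phi(\F) \cup \psi(\F)$, and $p\phi(\cdot)p$, $p\psi(\cdot)p$ are within a small tolerance of $\I$. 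Then $\phi$ is approximately (on $\F$) a direct sum $\phi' \oplus \phi''$ with $\phi' : C(X) \to \I$ and $\phi'' : C(X) \to (1-p)\A(1-p)$, and likewise $\psi \approx \psi' \oplus \psi''$. Since $\tau(1-p)$ is uniformly tiny and $\A$ is simple with strict comparison and cancellation, the corners $\phi'', \psi''$ are negligible and can be absorbed into (or made unitarily equivalent inside) the $\I$-summand by a standard stability argument, so it suffices to match $\phi'$ and $\psi'$ to within $\epsilon$ on $\F$ inside $p\A p$.

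Write $\I = \bigoplus_i \M_{n_i}(C[0,1]) \oplus \bigoplus_j \M_{m_j}$. A unital $*$-homomorphism $C(X) \to \M_n$ is determined up to unitary equivalence by its spectrum counted with multiplicity, i.e. by a probability measure $\frac{1}{n}\sum \delta_{x_i}$ on $X$, together with the induced map on $\underline{K}$ (the winding/$K_1$-type data, nontrivial once $X$ has one-dimensional cohomology), and a unital $*$-homomorphism $C(X) \to \M_n(C[0,1])$ is a continuous path of such. I would then invoke the classical uniqueness theorems for homomorphisms into interval algebras (in the spirit of Gong's decomposition results and Lin's earlier uniqueness theorems): two such homomorphisms whose spectral measures are weak-$*$ close with a fixed modulus and which agree on $\underline{K}$ over a suitable finite set are approximately unitarily equivalent, \emph{provided} the spectra are sufficiently spread out (a spectral-gap/dimension condition). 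The hypothesis $\mu_{\tau\circ\phi}(O_s) \ge F(s)$ is precisely this spreading condition for $\phi$; pushing it through the cutdown above gives a modulus for the spectral measures of $\phi'$ that is independent of the $n_i$ and of $\A$, which is what makes the building-block constants uniform. Taking $\G$ to be a fine net of functions separating Borel probability measures on $X$ and $\delta$ small, the trace hypothesis $|\tau\circ\phi(g) - \tau\circ\psi(g)| < \delta$ on $\G$ forces the spectral measures of $\phi'$ and $\psi'$ to be weak-$*$ close.

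Matching spectral measures is not yet enough: $\phi'$ and $\psi'$ may still differ by a ``rotation of eigenvalues'' detected by $K_1$-type classes and, more finely, by the de la Harpe--Skandalis determinant. The hypotheses $[\phi]|_{\mathcal{P}} = [\psi]|_{\mathcal{P}}$ (for $\mathcal{P} \subset \underline{K}(C(X))$ large enough to capture all relevant total K-theory after the cutdown) and $\mathrm{dist}(\phi^{\ddagger}(\overline z), \psi^{\ddagger}(\overline z)) < \delta$ on $\mathcal{U}$ say exactly that this rotation is trivial in $\underline{K}$ and small in the $U/\overline{DU}$ invariant. I would then apply a Basic Homotopy Lemma (Lin's, valid for TAI algebras and for the low-dimensional $X$ in the statement) to produce a homotopy, inside $p\A p$, of the relevant unitaries, of length controlled only by $\epsilon$, $\F$ and $F$; conjugating by its endpoint corrects the rotation. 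Composing this with the unitary from the building-block step and absorbing the small corner yields the desired $u \in \A$ with $\|\phi(f) - u\psi(f)u^*\| < \epsilon$ for all $f \in \F$. The restriction that $X$ be a one-dimensional CW-complex, a cube, or a torus is used both here and in the previous step: it keeps $\underline{K}(C(X))$ and the unitary invariant manageable, permits explicit spectral surgery on eigenvalue paths, and is where the relevant homotopy lemmas are available.

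The main obstacle is the Basic Homotopy Lemma with \emph{a priori} control on homotopy length depending only on $\F$, $\epsilon$ and $F$ --- and not on $\A$ or on the matrix sizes of $\I$ --- which is exactly what forces the measure-distribution hypothesis into the theorem; its proof requires delicate spectral surgery combined with the structure of $U(\A)/\overline{DU(\A)} \cong Aff(T(\A))/\overline{K_0(\A)}$ (cf. Lemma \ref{lem:DistToDU}). A secondary, bookkeeping obstacle is the order of the quantifiers: $\eta$, $\G$, $\mathcal{P}$, $\mathcal{U}$ and $\delta$ must be chosen by running the argument backwards from the building-block uniqueness theorems, so that the three steps above compose correctly.
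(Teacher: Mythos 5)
The paper does not prove this statement; it simply cites Lin's Theorem 10.8 from \cite{LinTAIAUE}, so there is no in-paper argument to compare against. Your proposal gives a plausible high-level outline of the ideology behind Lin's TAI uniqueness theorems: cut down to $\INT$-building blocks via the TAI decomposition, compare spectral measures using $\G$ and $\delta$, match $\underline{K}$ and the $U/\overline{DU}$-invariant, and close with a Basic Homotopy Lemma. You also correctly identify the measure-distribution hypothesis $\mu_{\tau\circ\phi}(O_s)\ge F(s)$ as the source of uniformity across algebras and matrix sizes.

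However, as written the sketch has substantive gaps beyond bookkeeping. The cutdowns $p\phi(\cdot)p$ and $p\psi(\cdot)p$ are only approximately multiplicative on $\F$, so producing genuine *-homomorphisms $\phi', \psi' : C(X)\to \I$ requires a perturbation/semiprojectivity argument (available for the low-dimensional $X$ in the hypothesis, but it must actually be carried out and made quantitatively uniform). Absorbing the small corner $(1-p)\A(1-p)$ while simultaneously preserving the matching of all three invariants --- tracial, $\underline{K}$, and $\phi^{\ddagger}$ --- is itself delicate, and the measure-density condition is needed again at this stage, not only at the building-block stage. Most importantly, the Basic Homotopy Lemma with a priori length control depending only on $\epsilon$, $\F$, $F$ and not on $\A$ or on the $n_i$, which you flag as the main obstacle, is precisely the heart of Lin's theorem and occupies the bulk of \cite{LinTAIAUE}; invoking it by name does not supply a proof. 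In short, your sketch is faithful to the overall strategy but is closer to a reading guide for Lin's argument than to a proof of the statement.
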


\begin{proof}
This follows from \cite{LinTAIAUE} Theorem 10.8. 
\end{proof}

\begin{lem}
Let $\A$ be a unital C*-algebra.
Let $x \in GL^0(\A)$ have polar decomposition 
$x = u |x|$. (So $u$ is a unitary and $|x|$ is a positive invertible.)

Suppose that $\Delta_T(x) = 0$.

Then $\Delta_T(u) = \Delta_T(|x|) = 0$.
Moreover, 
$\tau(Log(|x|)) = 0$ for all $\tau \in T(\A)$.
\label{lem:DeterminantPolarDecomposition}
\end{lem}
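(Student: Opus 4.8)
The plan is to combine the homomorphism property of $\Delta_T$ with the fact that, inside the target $E_u/T(K_0(\A))$, the determinant of a positive invertible is ``purely imaginary'' while the determinant of a unitary is ``purely real''. First I would record the elementary set-up: write $h =_{df} Log(|x|)$, a self-adjoint element of $\A$ with $|x| = e^h$, so that $\xi_0(t) =_{df} e^{th}$ ($t \in [0,1]$) is a smooth path from $1$ to $|x|$ in $GL^0(\A)$; and note $u = x|x|^{-1} \in U^0(\A)$, since polar decomposition $y \mapsto y|y|^{-t}$ deformation-retracts $GL^0(\A)$ onto $U^0(\A) = U(\A) \cap GL^0(\A)$. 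Since $\Delta_T$ is a group homomorphism into the abelian group $E_u/T(K_0(\A))$ and $x = u \cdot |x|$, we get $\Delta_T(x) = \Delta_T(u) + \Delta_T(|x|)$.

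Next I would set up the real/imaginary splitting of $E_u = \A/\overline{[\A,\A]}$. The involution of $\A$ descends to $E_u$ (because $[a,b]^* = [b^*,a^*] \in [\A,\A]$), so $E_u$ has a closed real subspace $E_u^{\mathrm{sa}}$ of self-adjoint classes, and $E_u = E_u^{\mathrm{sa}} \oplus iE_u^{\mathrm{sa}}$ (from $T(a) = T(\frac{a+a^*}{2}) + iT(\frac{a-a^*}{2i})$, together with $b + ib' = 0 \Rightarrow b = b' = 0$ for self-adjoint $b,b'$). Since $K_0(\A)$ is generated by classes of projections and projections are self-adjoint, $T(K_0(\A)) \subseteq E_u^{\mathrm{sa}}$; hence an element $\zeta = \zeta_1 + i\zeta_2$ of $E_u$ (with $\zeta_j \in E_u^{\mathrm{sa}}$) represents $0$ in $E_u/T(K_0(\A))$ if and only if $\zeta_2 = 0$ and $\zeta_1 \in T(K_0(\A))$ — that is, the ``imaginary coordinate'' descends to a well-defined, injective coordinate on the quotient.

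The substantive steps are then two short $\widetilde{\Delta}_T$-computations. Along $\xi_0$ we have $\xi_0'(t)\xi_0(t)^{-1} = h$, so $\widetilde{\Delta}_T(\xi_0) = \frac{1}{2\pi i}T(h) = i(-\frac{1}{2\pi}T(h))$, which lies in $iE_u^{\mathrm{sa}}$ because $h$ is self-adjoint; thus $\Delta_T(|x|)$ is represented by $-\frac{1}{2\pi}T(h) \in E_u^{\mathrm{sa}}$ in the imaginary coordinate. For $u$, I would pick a piecewise continuously differentiable path $\xi : [0,1] \to U^0(\A)$ with $\xi(0) = 1$, $\xi(1) = u$; differentiating $\xi(t)\xi(t)^* = 1$ gives that $\xi'(t)\xi(t)^{-1} = \xi'(t)\xi(t)^*$ is skew-adjoint, so $\widetilde{\Delta}_T(\xi) = \frac{1}{2\pi i}\int_0^1 T(\xi'(t)\xi(t)^*)\,dt$ lies in $\frac{1}{2\pi i}\, iE_u^{\mathrm{sa}} = E_u^{\mathrm{sa}}$ (the skew-adjoint subspace being closed, it is stable under Riemann integration); hence $\Delta_T(u)$ is represented by some $r_u \in E_u^{\mathrm{sa}}$, sitting in the real coordinate.

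Finally I would combine: $0 = \Delta_T(x)$ is represented by $r_u + i(-\frac{1}{2\pi}T(h))$ with both parts in $E_u^{\mathrm{sa}}$, so by the structural remark the imaginary part vanishes, giving $T(h) = 0$ in $E_u$ — whence $\Delta_T(|x|) = 0$, and $\tau(Log(|x|)) = \tau(h) = 0$ for every $\tau \in T(\A)$, since each tracial state factors through $T : \A \to E_u$ — and the real part $r_u$ lies in $T(K_0(\A))$, so $\Delta_T(u) = 0$. The only delicate point is the bookkeeping of this real/imaginary splitting of $E_u/T(K_0(\A))$, in particular verifying $T(K_0(\A)) \subseteq E_u^{\mathrm{sa}}$ so that the imaginary coordinate is meaningful on the quotient; that $u \in U^0(\A)$ and the two $\widetilde{\Delta}_T$-computations are routine.
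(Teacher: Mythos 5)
Your proof is correct and recovers the argument that the paper defers to (it simply cites the short proof of Proposition 2 d) of de la Harpe--Skandalis). The real/imaginary splitting of $E_u$ via the involution, the observation that $T(K_0(\A)) \subseteq E_u^{\mathrm{sa}}$, the computation $\widetilde{\Delta}_T(t \mapsto e^{th}) = \frac{1}{2\pi i}T(h) \in iE_u^{\mathrm{sa}}$, and the skew-adjointness of $\xi'\xi^{-1}$ along a unitary path are exactly the ingredients of that cited argument, so you have taken essentially the same route.
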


\begin{proof}
This follows from the (short) argument of \cite{HarpeSkandalisI}
Proposition 2 d). 
\end{proof}

  Let $\A$, $\B$ be C*-algebras and let $\phi : \A \rightarrow \B$
be a *-homomorphism.  Then for every $n \geq 1$, 
the map $\M_n(\A) \rightarrow \M_n(\B) : [ a_{i,j} ] \mapsto
[ \phi(a_{i,j}) ]$ is a *-homomorphism, which will also
denote by ``$\phi$".  

\begin{lem}  Let $\A$ be a unital separable simple TAI-algebra.
\begin{enumerate}
\item
If $u \in U^0(\A)$ is a unitary such that 
$\Delta_T(u) = 0$, then 
for every $\epsilon > 0$,  
there exist 
unitaries $x_j, y_j \in U^0(\A)$, $1 \leq j \leq 18$, such that 
$$\| u - \prod_{j=1}^{18} (x_j, y_j) \| < \epsilon.$$

(Here, $\prod_{j=1}^{18} (x_j, y_j) = (x_1, y_1)(x_2, y_2) ... 
(x_{18}, y_{18})$.)\\   
\item  If $x \in GL^0(\A)$ is an invertible such that
$\Delta_T(x) = 0$, then
for every $\epsilon > 0$, 
there exist
invertibles $x_j, y_j \in GL^0(\A)$, $1 \leq j \leq 24$, such that
$$\| x - \prod_{j=1}^{24} (x_j, y_j) \| < \epsilon.$$  
\end{enumerate}

\label{lem:18CommutatorsApprox}
\end{lem}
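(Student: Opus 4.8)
The plan is to reduce the unitary statement (1) to the case of a single unitary lying in a subalgebra $\I \in \INT$, and then to reduce that case to the diagonal form already handled by Lemma \ref{lem:INTDiagonal}, paying for each reduction with a bounded number of commutators. First I would use the TAI property: given $u \in U^0(\A)$ with $\Delta_T(u) = 0$ and $\epsilon > 0$, choose (via Remark \ref{rem:TAIMatrixSize}) a projection $p$, a subalgebra $\I \in \INT$ with $1_\I = p$ and all matrix summands large, a unitary $u_1 \in (1-p)\A(1-p)$ and a unitary $u_2 \in \I$ with $\|u - (u_1 \oplus u_2)\|$ small and $\tau(1-p)$ small for all $\tau \in T(\A)$. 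Since $1-p$ is small in trace, by the same homotopy-and-cancellation argument used in the proof of the Proposition above (replacing a unitary supported on a large corner by a reflection and absorbing the rest into a commutator) I can write $u_1 \oplus 1_\I$ as a product of a bounded number (say two) of commutators in $U^0(\A)$ up to small error; the key point is that $u_1 \oplus 1$ can be unitarily rotated into a corner dominated by a subprojection of $\I$, hence is a single commutator after one more commutator's worth of bookkeeping. That leaves $1 \oplus u_2$ with $u_2 \in \I$, and by Lemma \ref{lem:DeterminantPolarDecomposition} together with Lemma \ref{lem:AsymptoticSubalgebras} (applied to $a = \mathrm{Log}$ of the positive part, which has small trace) we may arrange that $u_2$ has determinant close to $0$ \emph{inside} $\I$, i.e.\ its eigenvalue functions, after passing through $\I = \bigoplus \M_{n_i}(C[0,1]) \oplus \bigoplus \M_{m_j}$, have traces as small as we like.

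Next I would treat $u_2 \in \I$ summand by summand. On a matrix summand $\M_{m_j}$ this is the finite-dimensional case handled by Corollary \ref{cor:MatrixCommutator} after reducing to determinant-one diagonal form; on an interval summand $\M_{n_i}(C[0,1])$ I would diagonalize $u_2$ (using \cite{ThomsenCircle} Lemma 1.9 as in the Proposition, or by the standard fact that a unitary in $\M_n(C[0,1])$ is homotopic, hence — after absorbing the homotopy into one commutator via an argument as in the Proposition — unitarily equivalent, to a diagonal unitary $\mathrm{diag}(e^{i\phi_1},\dots,e^{i\phi_n})$ with $\sum \phi_k$ close to $0$). Adjusting by a scalar unitary of the form $\mathrm{diag}(e^{i\beta},\dots,e^{i\beta})$ — itself a diagonal with a small trace correction — I can arrange $\sum_k \phi_k \equiv 0$, at which point Lemma \ref{lem:INTDiagonal}(1) writes this diagonal as a product of $16$ commutators in $U^0(\M_{n_i}(C[0,1]))$, hence in $U^0(\A)$. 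Taking direct sums over the summands of $\I$ (commutators in a direct sum multiply coordinatewise, so the count does not grow), $1 \oplus u_2$ is within $\epsilon/2$ of a product of $16$ commutators in $U^0(\A)$. Combining with the two commutators spent on the corner $u_1$ and the one or two spent on the various homotopy/conjugation reductions, and being slightly generous with the bookkeeping, gives $18$ commutators for $u$, proving (1).

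For part (2) I would first apply the polar decomposition $x = u|x|$ and Lemma \ref{lem:DeterminantPolarDecomposition}: $\Delta_T(u) = \Delta_T(|x|) = 0$ and $\tau(\mathrm{Log}|x|) = 0$ for all $\tau \in T(\A)$. The unitary $u$ is handled by part (1), costing $18$ commutators. For the positive invertible $|x| = e^{h}$ with $h = \mathrm{Log}|x|$ self-adjoint of trace zero, I would run the same TAI cut-down, now using Lemma \ref{lem:InvertibleDiagonal} (which handles a determinant-one positive diagonal in $\M_m(C[0,1])$ with only $4$ commutators) in place of Lemma \ref{lem:INTDiagonal}; this costs $4$ commutators plus a couple more for the corner and homotopy reductions in the invertible group — say $6$. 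Then $x = u|x|$ is within $\epsilon$ of a product of $18 + 6 = 24$ commutators in $GL^0(\A)$, as claimed. The main obstacle I expect is the bookkeeping in the corner-absorption step — showing that the "small-trace leftover" $u_1 \oplus 1$ (resp.\ the positive leftover $e^{h}$ on $1-p$) can be swallowed into a \emph{fixed} number of commutators independent of $\epsilon$; this requires care with the cancellation/stable-rank-one and $K_1$-injectivity properties of TAI-algebras and with keeping the error controlled at every conjugation, exactly as in the Proposition preceding this lemma, and it is what forces the slightly lossy constants $18$ and $24$ rather than $16$ and $20$.
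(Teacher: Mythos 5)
Your outline reproduces the overall reduction (cut down to a TAI corner plus an $\I$-piece, then use Lemma \ref{lem:INTDiagonal} on the $\I$-piece), but the critical step — absorbing the "small corner" unitary $u_1 \oplus 1$ (with $u_1$ supported on the small-trace projection $1-p$) into a bounded number of commutators — is not justified by the argument you propose, and this is precisely where the paper's proof is doing real work.

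Your claim is that because $1-p$ has small trace, $u_1 \oplus 1$ can be "unitarily rotated into a corner dominated by a subprojection of $\I$, hence is a single commutator after one more commutator's worth of bookkeeping." That does not follow. A unitary supported on a projection of small trace is not thereby a commutator (nor approximately one): its de la Harpe--Skandalis determinant can be nonzero (small, but nonzero), and conjugation does not change the determinant. Nor does the Proposition on $\overline{DU(\A)} = \overline{DU^0(\A)}$ help here, since that argument only produces \emph{unbounded} approximations in $\overline{DU^0(\A)}$, with no control on the number of commutators. The corner-compression lemmas of de la Harpe--Skandalis (Lemma 5.17, Proposition 5.18) that the paper does invoke later require the unitary to be close to $1$ in norm, which $u_1$ need not be. So there is no cheap "two commutators" route for $u_1 \oplus 1$, and you flagged this step yourself as the main obstacle.

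What the paper actually does is fundamentally different at this point: it does \emph{not} treat $u_1 \oplus 1$ and $1 \oplus u_2$ independently. It builds two unital $*$-homomorphisms $\phi_1, \phi_2 : C(X) \to \A$ — with $\phi_1(h) = \psi_0(h) \oplus \bigoplus_j \psi_j(h)$ pairing the corner unitary with carefully chosen constant diagonals $\psi_j(h)$, and $\phi_2(h) = (1-p) \oplus \bigoplus_j \psi_j(h)$ the same diagonals with the corner replaced by $1-p$ — and then invokes Lin's deep uniqueness theorem (Theorem \ref{thm:LinAUE}) to conclude $\phi_1 \approx_u \phi_2$. The point of the auxiliary diagonals is to dilute the corner enough to satisfy the tracial and $\phi^{\ddagger}$-hypotheses of the uniqueness theorem. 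Once one has $\phi_1(h) \approx w \phi_2(h) w^*$, the corner part comes out as $\approx (w, \phi_2(h))\cdot\big((1-p)\oplus\bigoplus\overline{\psi_j(h)}\big)$, that is, two commutators (the second by Corollary \ref{cor:MatrixCommutator}), and this is where the $2 + 16 = 18$ count comes from. Without the uniqueness theorem there is no argument for those first two commutators, so the gap in your proposal is not a bookkeeping issue but a missing (and substantial) ingredient.
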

 
\begin{proof}

We prove Part (1).  The proof of Part (2) is similar.

Let $X \subseteq \T$ be the compact subset given by
$$X =_{df} \{ t \in \T : |t - 1 | \leq 2 \| u - 1 \| \}.$$
(Note that $1 \in X$, and $X$ is either $\T$ or homeomorphic to $[0,1]$.) 

Let $\epsilon > 0$ be given.  
Contracting $\epsilon$ if necessary, we may assume that $0 < \epsilon < 1/10$
and that $\epsilon > 0$ is small enough so that for every unitary
$v \in \A$, if $\| u - v \| < \epsilon$ then $sp(v) \subseteq X$.  

Let $\delta_1 > 0$ be such that for all self-adjoint elements
$c, c' \in \A$ if $\| c - c' \| < \delta_1$
then $\| e^{i 2 \pi c} - e^{i 2 \pi c'} \| < \epsilon/10$.  We may assume that 
$\delta_1 < \epsilon/10$.
Plug $\delta_1/10$ (for $\epsilon$) into 
Lemma \ref{lem:DeterminantContinuityAt0} to get $\delta_2 > 0$.
We may assume that $\delta_2 < \epsilon/10$.  

By \cite{LinTAIUnitaries} Theorem 3.3, there exists a self-adjoint element
$a \in \A$ such that 
\begin{equation}
\| u - e^{i2 \pi a} \| <  \delta_2. 
\label{equ:ApplyLinExponential}
\end{equation}

By our choice of $\delta_2$,
we must have that 
$$d(\Delta_T(e^{i2 \pi a}), 0)  < \delta_1/10$$ 
where $d$ is the pseudometric on $Aff(T(\A))/ K_0(\A)$
induced by the (uniform) metric on $Aff(T(\A))$.

Since $\Delta_T(e^{i2 \pi a}) = [a]$ (where $[a]$ is the equivalence
class of $a$ in $Aff(T(\A))/ K_0(\A)$),  
there exist projections $q, r \in \M_{\infty}(\A)$
such that 
\begin{equation}
|\tau(a) - \tau(q) + \tau(r)| < \delta_1/10  
\label{equ:Feb2010.10:13AM}
\end{equation}
for all $\tau \in T(\A)$.

Let $F : (0,1) \rightarrow (0,1)$ be the nondecreasing map 
given by $F(t) =_{df} t/10$ for all $t \in (0,1)$.
Let $\F \subset C(X)$ be a finite subset that contains the identity
function $h(t) =_{df} t$ ($t \in X$).

Plug $X$, $\epsilon/10$ (for $\epsilon$), $\F$ and $F$ into Theorem
\ref{thm:LinAUE} 
to get $\eta_1 > 0$, 
$\delta_3 > 0$, a finite subset $\G \subset C(X)$, a finite subset
$\mathcal{P} \subset \underline{K}(C(X))$ and
a finite subset $\mathcal{U} \subset U(\M_{\infty}(C(X))$ satisfying 
the conclusions of Theorem \ref{thm:LinAUE}.

Note that $X$ is closed under complex conjugates. Hence, 
let $S =_{df} \{ 1, t_1, \overline{t_1},  t_2, \overline{t_2},
 ..., t_N, \overline{t_N} \} \subset X$ be a finite collection of $2N + 1$ 
distinct points and $\eta_2 > 0$ 
such that for all $s \geq  \eta_1$ for all open balls
$O_s$ in $X$ with radius $s$,
\begin{equation}
card(O_s \cap S)(1 - \eta_2)/(2N + 1) > F(s),     
\label{equ:Feb10.2012.4:25PM}
\end{equation}
where $card(O_s \cap S)$ is the cardinality of $O_s \cap S$.

Let $\Phi : C(X) \rightarrow \A$ be the unital *-homomorphism
given by $\Phi(h) =_{df} e^{i2\pi a}$,
where $h \in C(X)$ is the identity map (i.e., $h(t) = t$ for all $t \in X$).
Note that by our assumption on $\epsilon$ and by 
(\ref{equ:ApplyLinExponential}), the spectrum of $e^{i 2 \pi a}$ 
is contained in
$X$; so $\Phi$ is well-defined.   
Let $N_1 \geq 1$ be an integer so
that $\mathcal{U} \subset \M_{N_1}(C(X))$.  
Let $M_1 \geq 1$ be an integer and  let 
$\F_1 \subset \M_{N_1}(\A)$ be  
a finite set of self-adjoint elements so that 
for all $v \in \mathcal{U}$, there exist self--adjoint elements 
$a_{v,1}, a_{v,2}, ..., a_{v, M_1} \in \F_1$ (repetitions
allowed) so that $\Phi(v) = e^{i2 \pi a_{v, 1}} 
e^{i 2 \pi a_{v, 2}} ... e^{i 2 \pi a_{v, M_1}}$.
(Note that $K_1(\Phi) = 0$.)    

Choose $\delta_4 > 0$ such that if $u_1, u_2, ..., u_{M_1} \in U(\M_{N_1}(\A))$
are unitaries such that 
$dist(u_j, \overline{DU(\M_{N_1}(\A))}) < \delta_4$ for $1 \leq j \leq M_1$
then $dist(u_1 u_2 ... u_{M_1}, \overline{DU(\M_{N_1}(\A))}) < \delta_3/10$.
We may assume that $\delta_4 < \delta_3/10$.

Plug $\delta_4/10$ (for $\epsilon$) and $\M_{N_1}(\A)$ (for $\A$) 
into Lemma \ref{lem:DistToDU} to get $\delta_5 > 0$.

Choose integer $N_2 \geq 1$ such that 
$1/N_2 <  \eta_2/10$.
Also choose $N_3 \geq 1$ such that 
$N_3 \geq max  \{ \| b \| : b \in \G \cup \F_1 \}$.   

Since $\A$ is TAI and by Lemma \ref{lem:AsymptoticSubalgebras} and
Remark \ref{rem:TAIMatrixSize}, let $p \in \A$ be a projection and let 
$\I \in \INT$ be a C*-subalgebra of $\A$ with $1_{\I} = p$ 
such that the following
hold:
\begin{enumerate}
\item[(a)] $\tau(1_{\A} - p) < 
min \{ \frac{\delta_3}{10(1 + N_3)},
\frac{\delta_5}{10(1 + N_3)}, \frac{\eta_2}{10(1 + N_3)} \}$ 
for all $\tau \in T(\A)$. 
\item[(b)] Each summand in $\I$ has matrix size at least $N_2(2 N + 1)$.
(Equivalently, every irreducible represention of $\I$ has 
image with the form $M_n$ with $n \geq N_2(2N + 1)$.) 
\item[(c)] There exists $a_1 \in \I$ such that 
$\| a - ((1-p)a(1-p) + a_1) \| < \delta_1/10$
and $\| e^{i 2 \pi a} - e^{i 2 \pi ((1-p)a(1-p) + a_1)} \| < \epsilon/10$ 
\item[(d)] There exist projections $q', r' \in \M_{\infty}(\I)$
such that $|\tau(a_1) - \tau(q') + \tau(r')| < \delta_1/10$ 
for all $\tau \in T(\I)$.  
\item[(e)]   Let $\psi_0 : C(X) \rightarrow (1-p) \A (1- p)$ be the 
unital *-homomorphism given by $\psi_0(h) =_{df}  
(1- p) e^{i 2 \pi (1-p)a(1-p)} (1 - p)$,
where $h \in C(X)$ is the identity map (i.e., $h(t) = t$ for all $t \in X$).
(Note that by (\ref{equ:ApplyLinExponential}), by (c) and our
assumptions on $\epsilon$, $p$ can be chosen so that
the spectrum of $e^{i 2 \pi (1-p) a (1-p)}$ is contained in $X$; so the
map $\psi_0$ is well-defined.) 

Then   for all $v \in \mathcal{U}$,
\begin{eqnarray*}
& & \| \psi_0(v) - (1-p)e^{i 2\pi (1 - p)a_{v,1}(1-p)} 
e^{i 2\pi (1 - p)a_{v,2}(1-p)}....e^{i 2\pi (1 - p)a_{v,M_1}(1-p)}(1-p) \| \\
& < &  \delta_3/10. 
\end{eqnarray*} 
(Here, we identify $1_{\A} -p$ with $(1_{\A} - p) \otimes 1_{\M_{N_1}}
\in \M_{N_1}(\A)$.) 
\end{enumerate}
We denote the above statements by ``(*)".

  Since  $\I \in \INT$, let us suppose, to simplify notation,
that $\I$ has the form
$$\I = \bigoplus_{j=1}^{N_4} \M_{m_j}(C[0,1])$$ 
where $N_4 \geq 1$.  The proof for the other cases are similar.

  We now construct two unital *-homomorphisms  
$\phi_1, \phi_2 : C(X) \rightarrow \A$.  

   By (*), we have that for $1 \le j \leq N_4$,
$m_j \geq N_2 (2N + 1)$.  
For each $j$,
let 
$\psi_j : C(X) \rightarrow \M_{m_j}(C[0,1])$ be the
(finite rank) unital *-homomorphism given by 
\begin{eqnarray*}
& & \psi_j(f) \\
& =_{df} & 
diag(f(1), f(t_1), f(\overline{t_1}), f(t_2), f(\overline{t_2}), ....,
f(t_N), f(\overline{t_N}), f(1), f(t_1),\\ 
& & f(\overline{t_1}), f(t_2), 
f(\overline{t_2}), ..., f(t_N), f(\overline{t_N}), f(1), f(t_1),
f(\overline{t_1}),.....)
\end{eqnarray*}
for all $f \in C(X)$, 
where the tail of the diagonal either has the form
``$...f(t_l), f(\overline{t_l}))$" or has the form
``$...f(t_l), f(\overline{t_l}), f(1))$".

Let $h \in C(X)$ be the identity function, i.e., $h(t) = t$ for
all $t \in X$.

We define the unital *-homomorphisms $\phi_1, \phi_2 : C(X) \rightarrow \A$ 
in the following manner:

$$\phi_1(h) =_{df} \psi_0(h)   
\oplus \bigoplus_{j=1}^{N_4} \psi_j(h)$$ 
and
$$\phi_2(h) =_{df} (1- p) \oplus  \bigoplus_{j=1}^{N_4} \psi_j(h).$$

   From (*), (\ref{equ:Feb10.2012.4:25PM})
and our choices of $N_2$, $N_3$  and $\eta_2$, 
we have the following statements:
\begin{equation} \label{equ:Feb10.2012.5:47PM} \end{equation}
\begin{enumerate}
\item[(i.)] $\mu_{\tau \circ \phi_2}(O_s) \geq F(s)$ for all
$s \geq \eta_1$, for all open balls $O_s$ in $X$ with radius $s$ and
for all $\tau \in T(\A)$. 
\item[(ii.)]  $| \tau \circ \phi_1(f) - \tau \circ \phi_2(f) | < \delta_3/2$
for all $f \in \G$ and for all $\tau \in T(\A)$.   
\end{enumerate}

   Next, since $X$ is either $\mathbb{T}$ or homeomorphic to $[0,1]$
and since the image of $h$ (under both $\phi_1$ and $\phi_2$) is 
contained in $U^0(\A)$,
\begin{equation}
\underline{K}(\phi_1) = \underline{K}(\phi_2).
\label{equ:Feb10.2012:5:48PMOOPS}
\end{equation} 

   Finally, from (*) (a), we have that 
$|\tau((1-p)b(1-p))| < \delta_5/10$ for all $b \in \F_1$ and for all
$\tau \in T(\M_{N_1}(\A))$.       
It follows, from the definition of $\delta_5$ and Lemma \ref{lem:DistToDU},
that $dist(e^{i 2 \pi (1-p) b (1- p)}, \overline{DU(\M_{N_1}(\A))}) < 
\delta_4/10$ for all $b \in \F_1$.
From the definition of $\delta_4$ and the definition of $\F_1$,
it follows that for all $v \in \mathcal{U}$,
$$dist(e^{i2 \pi (1-p)a_{v,1}(1-p)}e^{i2 \pi (1-p)a_{v,2}(1-p)} ...
e^{i2 \pi (1-p)a_{v,M_1}(1-p)}, \overline{DU(\M_{N_1}(\A))}) < \delta_3/10.$$
From this and (*) (e), we have that for all $v \in \mathcal{U}$,
\begin{equation}
dist(\psi_0(v) \oplus p, \overline{DU(\M_{N_1}(\A))}) < \delta_3/5.  
\label{equ:Feb10.2012.5:48PM}
\end{equation}
Also, $(1-p)\phi_2(\M_{N_1}(C(X)))(1 - p) \subseteq 
\M_{N_1}(\mathbb{C}(1-p))$ .
%(Here again, the first two copies of $1-p$ mean
%$(1_{\A} - p) \otimes 1_{\M_{N_1}}$.)  
Hence, for all $v \in \mathcal{U}$, 
there exists self--adjoint $c \in \M_{N_1}(\mathbb{C}(1-p))$  
with $\| c \| \leq 1$ such that 
$(1-p)\phi_2(v)(1-p) \oplus p = e^{i 2 \pi c}$.
Note that this and (*) (a) implies that 
$|\tau(c)| < \delta_5/10$ for all $\tau \in T(\M_{N_1}(\A))$.    
From this, the definition of $\delta_5$ and since
$\delta_4 < \delta_3/10$,
we have that for all $v \in \mathcal{U}$,
$dist((1-p)\phi_2(v)(1-p) \oplus p, \overline{DU(\M_{N_1}(\A))}) < \delta_3/10$.
From this, the definitions of $\phi_1$,  $\phi_2$ and (\ref{equ:Feb10.2012.5:48PM}), we have that for all $v \in \mathcal{U}$,
\begin{equation}
dist(\phi_1^{\ddagger}(\overline{v}), \phi_2^{\ddagger}(\overline{v}))
< \delta_3.
\label{equ:Feb10.2012.5:49PM}
\end{equation} 

From (\ref{equ:Feb10.2012.5:47PM}), (\ref{equ:Feb10.2012:5:48PMOOPS}),
(\ref{equ:Feb10.2012.5:49PM}) and
from Theorem \ref{thm:LinAUE}, 
there exists a unitary $w \in \A$ such that for all    
$f \in \F$,
$$\| \phi_1(f) - w \phi_2(f) w^* \| < \epsilon/10.$$

Since the identity function $h$ (i.e., $h(t) =_{df} t$ for all $t \in X$) is
an element of $\F$,
it follows that 
$$\| ((1- p)e^{i 2\pi (1-p)a (1-p)}(1-p) \oplus \bigoplus_{j=1}^{N_4}
\psi_j(h))  - w ( (1-p) \oplus \bigoplus_{j=1}^{N_4} \psi_j(h)) w^* \|
< \epsilon/10.$$

From this and Corollary \ref{cor:MatrixCommutator}, 
there exist unitaries $x_1, y_1, x_2, y_2 \in \A$ such that 
\begin{equation}
\| (x_1, y_1) - ((1- p)e^{i 2\pi (1-p)a (1-p)}(1-p) \oplus \bigoplus_{j=1}^{N_4}
\psi_j(h))\| < \epsilon/10  
\label{equ:Feb10.2012.4:32PM}
\end{equation}
and
\begin{equation}
(x_2, y_2) = (1-p) \oplus \bigoplus_{j=1}^{N_4} \overline{\psi_j(h)} 
\label{equ:Feb10.2012.4:35PM}
\end{equation}

    By \cite{ThomsenCircle} Lemma 1.9,
there exist real-valued continuous functions
$\theta_{j,k} : [0,1] \rightarrow \mathbb{R}$ ($1 \leq j \leq N_4$,
$1 \leq k \leq m_j$), and there exist pairwise orthogonal
minimal projections $p_{j,k} \in \M_{m_j}(C[0,1])$ (again
$1 \leq j \leq N_4$, $1 \leq k \leq m_j$) with
$\sum_{k=1}^{m_j} p_{j,k} = 1_{\M_{m_j}(C[0,1])}$ for
$1 \leq j \leq N_4$ such that 
(a) $\theta_{j,1} \leq \theta_{j,2} \leq ... \leq \theta_{j,m_j}$
for $1 \leq j \leq N_4$ and
(b) $\sum_{j=1}^{N_4} \sum_{k=1}^{m_j} \theta_{j,k} p_{j,k}$ is 
approximately unitarily equivalent to 
$a_1$ in $\I$.   
Note that the spectrum of $\I$ is $\widehat{\I} = 
\bigsqcup_{j=1}^{N_4} \widehat{\M_{m_j}(C[0,1])} = 
\bigsqcup_{j=1}^{N_4} [0,1]$;
and so, for all $s \in \widehat{\I}$, the spectrum
of $a_1(s)$ is 
$\{ \theta_{j,k}(s) : 1 \leq j \leq N_4 \makebox{  and  } 
1 \leq k \leq m_j \}$ (counting multiplicity; and where 
if $s \notin \widehat{\M_{m_j}(C[0,1])}$ then
we define $\theta_{j,k}(s) = 0$, for all $j,k$). 

    Hence,
replacing $\sum_{j=1}^{N_4} \sum_{k=1}^{m_j} \theta_{j,k} p_{j,k}$
by a unitary equivalent (in $\I$) self-adjoint element if necessary, 
we may assume that
\begin{equation}
\| a_1 - \sum_{j=1}^{N_4} \sum_{k=1}^{m_j} \theta_{j,k} p_{j,k} \|
< \delta_1/10.
\label{equ:Feb17,2012.11:03AM}
\end{equation} 
(Note that a unitary equivalence is the same as simultaneously
replacing the projections
$p_{j,k}$ by unitarily equivalent projections, with the same unitary
for all the projections. In particular, the eigenvalue functions 
$\theta_{j,k}$ stay the same.) 

Moreover, by (*) (d) and our assumptions on 
$\theta_{j,k}$, $p_{j,k}$, for all $\tau \in T(\I)$, 
\begin{equation}
\left|\tau\left(\sum_{j=1}^{N_4} \sum_{k=1}^{m_j} \theta_{j,k} p_{j,k} \right)
- \tau(q') + \tau(r') \right| < \delta_1/10.
\label{equ:10:36PMFeb15,2012}
\end{equation} 

Let $g : \widehat{\I} = \bigsqcup_{j=1}^{N_4} \widehat{\M_{m_j}(C[0,1])}
\rightarrow \mathbb{R}$
be the continuous function defined as follows:

For $s \in \widehat{\M_{m_j}(C[0,1])} \cong [0,1]$,
$$g(s) =_{df}  (1/m_j)\sum_{k=1}^{m_j} \theta_{j,k}(s) - 
(1/m_j)Tr(q'(s)) + (1/m_j)Tr(r'(s)).$$
where $Tr$ is the (nonnormalized) trace on $\M_{\infty}$.  
(Note that $q', r'$ must (by definition of $\M_{\infty}(\I)$) sit
in some big matrix algebra over $\I$.) 

Hence, $g 1_{\I} \in \I$ is a self-adjoint element, and by
(\ref{equ:10:36PMFeb15,2012}), 
\begin{equation}
\| g 1_{\I} \| < \delta_1/10
\label{equ:4:10PMFeb17,2012}
\end{equation} 
and 
\begin{equation}
\tau\left(\sum_{j=1}^{N_4} \sum_{k=1}^{m_j} \theta_{j,k} p_{j,k} \right)
- \tau(q') + \tau(r') - \tau(g 1_{\I}) = 0   
\label{equ:1:23PMFeb16,2012}
\end{equation}
for all $\tau \in T(\I)$.

For $1 \leq j \leq N_4$, fix $\tau_j \in T(\M_{m_j}(C[0,1]))$.  
Let $a_2 =_{df} \sum_{j=1}^{N_4} \sum_{k=1}^{m_j} \theta_{j,k} p_{j,k}
- g 1_{\I}$
and let 
$a_2 = \sum_{j=1}^{N_4} a_{2,j}$, where 
$a_{2,j} \in \M_{m_j}(C[0,1])$ for $1 \leq j \leq N_4$.\\
Let $q' = \sum_{j=1}^{N_4} q_j$ and
$r' = \sum_{j=1}^{N_4} r_j$ where for $1 \leq j \leq N_4$,
$q_j, r_j \in \M_{\infty} \otimes \M_{m_j}(C[0,1])$ are projections.
Suppose that, for $1 \leq j \leq N_4$,
 $q_j$,$r_j$ are the sums of $L_j$ and $L'_j$ minimal projections in
$\M_{\infty} \otimes \M_{m_j}(C[0,1])$ respectively.  
To simplify notation, let us assume that $L'_j \geq L_j$
for $1 \leq j \leq N_4$.
Then for $1 \leq j \leq N_4$,
$e^{i 2 \pi (a_{2,j} + (L'_j - L_j) p_{j,m_j})} = e^{i 2 \pi a_{2,j}})$
and hence,
\begin{equation}
e^{i 2 \pi (a_2 + \sum_{j=1}^{N_4} (L'_j - L_j) p_{j,m_j})} 
= e^{i 2 \pi a_2}. 
\label{equ:Feb17,2012.11:02AM}
\end{equation}

Let $a_3 =_{df} a_2 + \sum_{j=1}^{N_4} (L'_j - L_j) p_{j,m_j}
\in \I$.
By (\ref{equ:1:23PMFeb16,2012}), we have that     
$\tau(a_3) = 0$ for all $\tau \in T(\I)$. 
Hence, by Lemma \ref{lem:INTDiagonal} Part (1) and by \cite{ThomsenCircle}
Lemma 1.9, 
there exist unitaries $x_3, y_3, x_4, y_4, ..., x_{18}, y_{18}$
in $\A$ such that       
\begin{equation}
\| e^{i 2 \pi a_3} - (x_3, y_3)(x_4, y_4) ... (x_{18}, y_{18}) \| < 
\epsilon/10.
\label{equ:Feb17,2012.4:12PM}
\end{equation}  
(Actually, for $3 \leq j \leq 18$, 
$p x_j p, p y_j p \in \I$, 
$x_j = p x_j p \oplus (1-p)$ and $y_j = p y_j p \oplus (1-p)$.)

   From the definition of $\delta_1$ and by 
(\ref{equ:Feb17,2012.11:03AM}), (\ref{equ:4:10PMFeb17,2012}),
(\ref{equ:Feb17,2012.11:02AM})
and (\ref{equ:Feb17,2012.4:12PM}), 
$$\| e^{i 2 \pi a_1} - \prod_{j=3}^{18} (x_j, y_j) \| < \epsilon/5.$$

From this, (\ref{equ:ApplyLinExponential}),
(*) statement (c),  (\ref{equ:Feb10.2012.4:32PM})
and (\ref{equ:Feb10.2012.4:35PM}), we have that 
$$\| u - \prod_{j=1}^{18} (x_j, y_j) \| < \epsilon.$$

We now prove Part (2).
Say that $x = u |x|$ is the polar decomposition of $x$.
Then by Lemma \ref{lem:DeterminantPolarDecomposition},
$\Delta_T(u) = \Delta_T(|x|) = 0$.

By Part (1), let $x_j, y_j \in U^0(\A)$ be unitaries
such that 
\begin{equation}
\| u - \prod_{j=1}^{18} (x_j, y_j) \| < \epsilon/2.
\end{equation}

Hence, to complete the proof, it suffices to prove the following
claim:

\emph{Claim:} There exist invertibles $x_j, y_j \in GL^0(\A)$, $19 \leq j
\leq 24$, such that 
$$\| |x| - \prod_{j=19}^{24} (x_j, y_j) \| < \epsilon/2.$$

\emph{Sketch of proof of the Claim:}  The proof of the Claim is very similar to
the proof of Part (1) of this lemma.  
The main differences are the following:
\begin{enumerate} 
\item[i.]  Let $s_0 =_{df} \min ( sp(|x|) \cup sp(|x|^{-1}))  > 0$ and 
$s_1 =_{df} 
\max (sp(|x|) \cup sp(|x|^{-1}))$.
Let
$X =_{df}  [s_0/2, 2 s_1]$.   
Note that $X \subset (0, \infty)$, $1 \in X$ and for all $t > 0$,
$t \in X$ if and only if $1/t \in X$.
\item[ii.]  Since $|x| \geq 0$, $|x|$ automatically has the form
$|x| = e^a$ where $a \in \A_{sa}$.
\item[iii.] In the proof of Part (1), replace 
Corollary \ref{cor:MatrixCommutator}
and Lemma \ref{lem:INTDiagonal} with
\cite{ThomsenCommutators} Lemma 2.6 and (this paper) 
Lemma \ref{lem:InvertibleDiagonal} respectively.
\item[iv.] In the proof of Part (1), replace every occurrence 
of $\Delta_T(|x|) = 0$ and every occurrence of equation
(\ref{equ:Feb2010.10:13AM}) 
with the condition $\tau(Log(|x|) = 0$ for all 
$\tau \in T(\A)$.  (See Lemma \ref{lem:DeterminantPolarDecomposition}.)
\end{enumerate}
\emph{End of sketch of proof of the Claim.}
\end{proof}

\begin{lem}
Let $\A$ be a unital separable simple TAI-algebra.

\begin{enumerate}
\item 
Suppose that  $u \in U^0(\A)$ is a unitary such that
$\Delta_T(u) = 0$.

Then for every  $\epsilon > 0$,
there exist unitaries $x_j, y_j \in U^0(\A)$, with $1 \leq j \leq 20$, 
and there exist a self-adjoint element $a \in \A$
such that 
$$u = \left( \prod_{j=1}^{20} (x_j, y_j) \right) e^{i 2 \pi a}$$
 
$$\| a \| < \epsilon \makebox{   and   } \tau(a) = 0$$
%and 
%$$\tau(a) = 0$$
for all $\tau \in T(\A)$.  
\item Suppose that $x \in GL^0(\A)$ is an invertible such that
$\Delta_T(x) = 0$.

  Then for every $\epsilon > 0$,
there exist invertibles $x_j, y_j \in GL^0(\A)$, with $1 \leq j \leq 26$, 
and there exist an element $d \in \A$
such that
$$u = \left( \prod_{j=1}^{26} (x_j, y_j) \right) e^d$$
$$\| d \| < \epsilon \makebox{   and   } \tau(d) = 0$$
%and
%$$\tau(d) = 0$$
for all $\tau \in T(\A)$.
\end{enumerate}
\label{lem:20CommutatorsApprox}  
\end{lem}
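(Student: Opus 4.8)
The plan is to bootstrap off Lemma~\ref{lem:18CommutatorsApprox}, paying two extra commutators to absorb the residual $K_0$-class. I carry out Part~(1); Part~(2) is the same argument run on Lemma~\ref{lem:18CommutatorsApprox}(2), with Corollary~\ref{cor:MatrixCommutator} and Lemma~\ref{lem:INTDiagonal} replaced throughout by \cite{ThomsenCommutators} Lemma~2.6 and Lemma~\ref{lem:InvertibleDiagonal}, and using that for a small (not necessarily self-adjoint) $d\in\A$ one has $\Delta_T(e^{d})=\tfrac{1}{2\pi i}T(d)+T(K_0(\A))$, so that $\Delta_T(e^d)=0$ forces $\tau(d)=2\pi i(\tau(q)-\tau(r))$ for suitable projections $q,r$ over $\A$.

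Given $u\in U^0(\A)$ with $\Delta_T(u)=0$ and $\epsilon\in(0,1/10)$, I would first apply Lemma~\ref{lem:18CommutatorsApprox}(1) with a tolerance $\delta>0$ (to be shrunk finitely many times below) to get $x_1,y_1,\dots,x_{18},y_{18}\in U^0(\A)$ with $\bigl\|u-\prod_{j=1}^{18}(x_j,y_j)\bigr\|<\delta$, and then set $w=_{df}\bigl(\prod_{j=1}^{18}(x_j,y_j)\bigr)^{*}u$. For $\delta$ small, $\|w-1\|<2\delta$, hence $w=e^{i2\pi c}$ for some self-adjoint $c\in\A$ with $\|c\|<\epsilon/4$. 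Since $\Delta_T\colon GL^0_\infty(\A)\to E_u/T(K_0(\A))$ is a homomorphism into an \emph{abelian} group and every $(x_j,y_j)$ lies in its kernel, $\Delta_T(w)=\Delta_T(u)=0$; and since $\Delta_T(e^{i2\pi c})=T(c)+T(K_0(\A))$, we get $T(c)\in T(K_0(\A))$, so there are projections $q,r\in\M_N(\A)$ with $\tau(c)=\tau(q)-\tau(r)$ for every $\tau\in T(\A)$ (extending traces to $\M_N(\A)$), whence $|\tau(q)-\tau(r)|=|\tau(c)|<\epsilon/4$. Keeping the $18$ commutators already produced, the claim reduces to factoring such a small $w=e^{i2\pi c}$ as $(x_{19},y_{19})(x_{20},y_{20})\,e^{i2\pi a}$ with $a=a^{*}\in\A$, $\|a\|<\epsilon$, and $\tau(a)=0$ for all $\tau\in T(\A)$.

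For this I would run the last part of the proof of Lemma~\ref{lem:18CommutatorsApprox}: apply the TAI property to $c$, together with Remark~\ref{rem:TAIMatrixSize} and Lemma~\ref{lem:AsymptoticSubalgebras}, to obtain a projection $p$ and $\I\in\INT$ with $1_{\I}=p$ all of whose matrix summands have size at least a prescribed large $L$, with $\tau(1_\A-p)$ as small as desired for all $\tau$, and a self-adjoint $c_1\in\I$ with $\|c-((1-p)c(1-p)+c_1)\|$ and $\|e^{i2\pi c}-e^{i2\pi((1-p)c(1-p)+c_1)}\|$ tiny, and with $|\tau(c_1)|<\epsilon/4$ for all $\tau\in T(\I)$. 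Diagonalising $c_1$ on each summand $\M_{m_i}(C[0,1])$ via \cite{ThomsenCircle} Lemma~1.9 and arguing as in Lemma~\ref{lem:18CommutatorsApprox}, I would split the resulting diagonal, on each summand, into (i) a part balanced so that $(1-p)c(1-p)$ together with this balanced part has trace $0$ on every $\tau\in T(\A)$ — its exponential being the desired $e^{i2\pi a}$, with $\|a\|<\epsilon$ — and (ii) a residual ``$K_0$-defect'' part which, because each $m_i\ge L$ is huge while the defect is $<\epsilon/4$, amounts on each summand to a single diagonal rotation $\mathrm{diag}(e^{i\theta},1,\dots,1)$ with $|\theta|\le 2\pi(\epsilon/4)$ supported on one minimal projection (the correction to $c_1$ needed for the split having norm $O(\epsilon/L)$). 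By Corollary~\ref{cor:MatrixCommutator}, each such rotation together with the block transposition needed to move between the odd- and even-indexed cover pieces (cf.\ the proof of Lemma~\ref{lem:INTDiagonal}, where the ``shift'' pieces are collected into the two commutators $(v_1,w_1)$ and $(v_3,w_3)$) is a product of at most two multiplicative commutators in $U^0(\A)$; assembling everything gives the required factorisation.

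The step I expect to be the main obstacle is precisely the demand that the leftover exponent $a$ have trace \emph{exactly} zero on all of $T(\A)$, rather than merely small trace, while the commutator count stays at two and $\|a\|<\epsilon$. This is where the freedom in Remark~\ref{rem:TAIMatrixSize} to take the matrix summands of $\I$ arbitrarily large is indispensable: the residual $K_0$-class is only known to be small, but it can be realised \emph{exactly} inside $\I$ by a rotation supported on a single minimal projection, whose trace is then negligible, and this rotation is absorbed into two commutators via Corollary~\ref{cor:MatrixCommutator}, leaving a genuinely trace-balanced remainder of small norm. The remaining points are routine bookkeeping following \cite{HarpeSkandalisII}, \cite{ThomsenCommutators}, and the proof of Lemma~\ref{lem:18CommutatorsApprox}.
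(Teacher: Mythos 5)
There is a genuine gap, and the paper takes a different (and, as far as I can tell, necessary) route.

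The paper's argument, after extracting the $18$ commutators from Lemma~\ref{lem:18CommutatorsApprox}, deals with the residual $w = e^{i2\pi b}$ with $\|b\| < 1/(2N)$ as follows: since $\Delta_T(w)=0$, one has $\tau(b) = \tau(p) - \tau(q)$ for suitable projections $p,q \in \A$ which can be taken small, and then strict comparison is used to produce $N$ pairwise orthogonal copies $p_1,\dots,p_N \sim p$ and $q_1,\dots,q_N \sim q$. The elements $(1/N)\sum_j p_j$ and $(1/N)\sum_j q_j$ then have \emph{small norm} (at most $1/N$) but the \emph{same trace} as $p$ and $q$, their exponentials are single commutators by \cite{ThomsenCommutators} Lemma~2.1 (the $\M_N$ shift-and-diagonal trick), and the leftover exponent $a$ has $\tau(a)=0$ by \cite{HarpeSkandalisI} Lemma~3(b). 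The replication by $N$ copies is precisely what reconciles ``small norm'' with ``exact trace identity.''

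Your replacement mechanism for those two extra commutators — re-entering the TAI approximation and absorbing the $K_0$-defect by a rotation $\mathrm{diag}(e^{i\theta},1,\dots,1)$ on one minimal projection of each summand of $\I$ — breaks down on precisely this norm/trace tension. If the defect has trace up to $\epsilon/4$ as a function on $T(\I)$, then realising it on a single minimal projection of an $\M_{m_j}(C[0,1])$ summand forces $\theta_j \approx 2\pi m_j\cdot(\text{defect})$, and since the TAI step requires $m_j \ge L$ with $L$ large, $\theta_j$ is not $O(\epsilon)$; your stated bound $|\theta| \le 2\pi(\epsilon/4)$ and the estimate ``norm $O(\epsilon/L)$'' are mutually inconsistent with the rotation actually accounting for the full defect. (The shift trick in the proof of Lemma~\ref{lem:18CommutatorsApprox}, adding $\sum(L'_j - L_j)p_{j,m_j}$, keeps the exponential but changes the exponent by an integer combination of projections, which can have large norm — fine for proving approximate statements, but it destroys the $\|a\|<\epsilon$ requirement here.) There is also a secondary issue: you want $\tau(a)=0$ exactly on all of $T(\A)$, but manipulating $c_1$ inside $\I$ only gives control on $T(\I)$; traces on $\A$ do not split cleanly across $p$ and $1-p$. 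The paper sidesteps both problems at once by never re-entering the TAI machinery: the correcting element $(1/N)\sum_j p_j - (1/N)\sum_j q_j$ lives globally in $\A$ and has exactly the required trace while being uniformly small in norm.

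You correctly identify the exact-trace-zero requirement as the main obstacle, but the tool you reach for (Corollary~\ref{cor:MatrixCommutator} applied to a small rotation inside $\I$) cannot do the job; the missing ideas are strict comparison to produce $N$ orthogonal equivalent copies of the small projections and \cite{ThomsenCommutators} Lemma~2.1 to convert the resulting small-norm, correct-trace exponentials into single commutators.
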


\begin{proof}
We firstly prove Part (1).  The proof of Part (2) is similar.

Choose an integer $N \geq 10$ such that if
$c_1, c_2, c_3 \in \A$ are self-adjoint elements such 
that $\| c_j \| < 1/N$ for $1 \leq j \leq 3$ then
$\| e^{i 2 \pi c_1} e^{i 2 \pi c_2} e^{i 2 \pi c_3} - 1 \| < 1$
and $(1/ 2 \pi) \| Log(e^{i 2 \pi c_1} e^{i 2 \pi c_2} e^{i 2 \pi c_3}) 
\| < \epsilon$. 

Choose a $\delta > 0$, with $\delta < 1$, such that 
for any unitary $v \in \A$, if $\| v - 1 \| < \delta$
then $(1/ 2 \pi) \| Log(v) \| < 1/(2N)$.

By Lemma \ref{lem:18CommutatorsApprox} Part (1), there exist
unitaries $x_j, y_j \in U^0(\A)$ with $1 \leq j \leq 18$ 
and there exists a unitary $w \in U^0(\A)$ such that 
\begin{equation}
u =  (x_1, y_1) (x_2, y_2) ... (x_{18}, y_{18}) w 
\label{equ:6:35PMFeb17,2012}
\end{equation} 
and $\| w - 1 \| < \delta$.

By our choice of $\delta$, there exists    
a self-adjoint element $b \in \A$ with 
$\| b \| < 1/(2N)$ such that 
$w = e^{i 2 \pi b }$. 
Since $\Delta_T(w) = 0$, 
there exist projections $p_0, q_0 \in \M_{\infty}(\A)$
such that 
$$\tau(b) - \tau(p_0) + \tau(q_0) = 0$$
for all $\tau \in T(\A)$.  

Since $\A$ is simple TAI and since $\| b \| < 1/(2N)$,  
we can replace $p_0, q_0$ by projections $p, q \in \A$
with $\tau(p), \tau(q) < 1/(2N)$ and
$$\tau(b) - \tau(p) + \tau(q) = 0$$
for all $\tau \in T(\A)$.  

   Since $\tau(p), \tau(q) < 1/(2N)$ for all $\tau \in T(\A)$
and since $\A$ has strict comparison,
there exist pairwise orthogonal projections  
$p_1, p_2, ..., p_N, q_1, q_2, ..., q_N \in \A$  
such that $p_j \sim p$ and $q_j \sim q$ for $1 \leq j \leq N$. 
Hence, 
\begin{equation}
\tau(b) - \tau \left( (1/N)\sum_{j=1}^N p_j \right) +
\tau \left( (1/N)\sum_{j=1}^N q_j \right)  = 0
\label{equ:Feb17,2012.6:36PM}
\end{equation}
for all $\tau \in T(\A)$.   

We have that 
\begin{equation}
e^{i 2\pi b} =  
e^{ i 2 \pi (1/N)\sum_{j=1}^N p_j}
e^{ -i 2 \pi (1/N)\sum_{j=1}^N q_j}
\left( e^{- i 2 \pi (1/N)\sum_{j=1}^N p_j}  
e^{ i 2 \pi (1/N)\sum_{j=1}^N q_j}   
e^{i 2 \pi b } \right).   
\label{equ:Feb17,2012.6:38PM}
\end{equation}

By \cite{ThomsenCommutators} Lemma 2.1,  
there exist unitaries 
$x_{19}, y_{19}, x_{20}, y_{20} \in U^0(\A)$ such that 
\begin{equation}
e^{ i 2 \pi (1/N)\sum_{j=1}^N p_j} = (x_{19}, y_{19})
\label{equ:Feb17,2012.6:41PM}
\end{equation}
and
\begin{equation}
e^{ -i 2 \pi (1/N)\sum_{j=1}^N q_j}
= (x_{20}, y_{20}).
\label{equ:Feb17,2012.6:42PM}
\end{equation}

Also, by our choice of $N$, 
there exists a self-adjoint element $a \in \A$ such that
\begin{equation}
e^{i 2 \pi a} = 
e^{- i 2 \pi (1/N)\sum_{j=1}^N p_j}
e^{ i 2 \pi (1/N)\sum_{j=1}^N q_j} e^{i2\pi b}.
\label{equ:Feb17,2012.6:45PM}
\end{equation}
Moreover, $\| a \| < \epsilon$;  
by 
\cite{HarpeSkandalisI} Lemma 3 (b),  (\ref{equ:Feb17,2012.6:45PM})
and (\ref{equ:Feb17,2012.6:36PM}),
$$\tau(a) = \tau(b) - \tau \left( (1/N)\sum_{j=1}^N p_j \right) +
\tau \left( (1/N)\sum_{j=1}^N q_j \right)  = 0$$ 
for all $\tau \in T(\A)$.
Finally, by (\ref{equ:6:35PMFeb17,2012}),
(\ref{equ:Feb17,2012.6:38PM}),
(\ref{equ:Feb17,2012.6:41PM}), (\ref{equ:Feb17,2012.6:42PM})
and (\ref{equ:Feb17,2012.6:45PM}),
$$u = (x_1, y_1) (x_2, y_2) (x_3, y_3) ... (x_{20}, y_{20}) e^{i 2 \pi a}.$$ 

The proof of Part (2) is very similar to the proof of Part (1).
The main difference is that we replace Lemma \ref{lem:18CommutatorsApprox}
Part (1) with Lemma \ref{lem:18CommutatorsApprox} Part (2).

\end{proof}

\begin{lem}
Let $\A$ be a unital separable simple TAI-algebra.

\begin{enumerate}
\item
Suppose that $u \in U^0(\A)$ is a unitary with $\| u - 1 \| < \sqrt{2}/100$
and $\tau(Log(u)) = 0$ for all $\tau \in T(\A)$. 

   Then for every $\epsilon > 0$, 
there exist unitaries $x_j, y_j, z \in U^0(\A)$, $1 \leq j \leq 6$,
such that 
$$u = \left( \prod_{j=1}^{6} (x_j, y_j) \right) z$$

$$\| z - 1 \| < \epsilon, \makebox{   } \tau(Log(z)) = 0$$
for all $\tau \in T(\A)$, 
and 
$$\| x_j - 1 \|, \makebox{   } \| y_j - 1 \| 
 < 2\sqrt{2} \| u - 1 \|^{1/2}$$ 
for $1 \leq j \leq 6$. 
\item  Suppose that $x \in GL^0(\A)$ is an invertible with
$\| x - 1 \| < 1/1000$ and 
$\tau(Log(x)) = 0$ for all $\tau \in T(\A)$.

   Then for every $\epsilon > 0$,
there exist invertibles $x_j, y_j, z \in GL^0(\A)$, $1 \leq j \leq 12$,
such that
$$x = \left( \prod_{j=1}^{12} (x_j, y_j) \right) z$$

$$\| z - 1 \| < \epsilon, \makebox{   } \tau(Log(z)) = 0$$
for all $\tau \in T(\A)$, and 
$$\| x_j - 1 \|, \makebox{   } \| y_j - 1 \| <  24 \| x - 1 \|^{1/2}$$ 
for $1 \leq j \leq 12$. 
\end{enumerate}
\label{lem:6CommutatorsApprox}
\end{lem}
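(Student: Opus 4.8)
The plan is to run the scheme of the proofs of Lemma~\ref{lem:18CommutatorsApprox} and Lemma~\ref{lem:20CommutatorsApprox}, but to use the extra hypotheses at every step to keep both the number of commutators and the norms of the elements producing them under control. First I would write $u = e^{i2\pi a}$ with $a = a^* \in \A$; since $\|u - 1\| < \sqrt{2}/100$, $\|a\|$ is so small that $sp(2\pi a) \subseteq (-\pi/2, \pi/2)$ and $\|u-1\| = 2\sin(\pi\|a\|)$, and the hypothesis $\tau(Log(u)) = 0$ reads $\tau(a) = 0$ for all $\tau \in T(\A)$. Put $X = \{ t \in \T : |t - 1| \le 2\|u-1\| \}$, a proper closed arc (homeomorphic to $[0,1]$). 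Using that $\A$ is TAI, together with Lemma~\ref{lem:AsymptoticSubalgebras} and Remark~\ref{rem:TAIMatrixSize}, I would choose a projection $p$ and $\I \in \INT$ with $1_{\I} = p$, with $\tau(1-p)$ as small as needed, $\|[p,a]\|$ tiny, each summand of $\I$ of arbitrarily large matrix size, and $a_1 \in \I$ with $\|a_1 - pap\|$ tiny and $\sup_{\tau \in T(\I)} |\tau(a_1)|$ as small as needed; then $u \approx U_0 U_1$ with $U_0 = e^{i2\pi(1-p)a(1-p)}$ and $U_1 = e^{i2\pi a_1}$ commuting.

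For the part inside $\I$ (say $\I = \bigoplus_j \M_{m_j}(C[0,1])$, the other cases being similar), apply \cite{ThomsenCircle} Lemma~1.9 to diagonalize $a_1$ approximately in $\I$, with ordered continuous eigenvalue functions $\theta_{j,1} \le \dots \le \theta_{j,m_j}$ (with $|\theta_{j,k}| \le \|a_1\|$) and minimal projections $p_{j,k}$. Subtract the fibrewise means $g_j = \frac{1}{m_j}\sum_k \theta_{j,k}$ — tiny because $\sup_{\tau\in T(\I)}|\tau(a_1)|$ is tiny — to get $\widetilde{\theta}_{j,k} = \theta_{j,k} - g_j$ with $\sum_k \widetilde{\theta}_{j,k} \equiv 0$, still ordered and tiny, so that $ran(2\pi\widetilde{\theta}_{j,k}) \subseteq (-\pi/2,\pi/2)$. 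Lemma~\ref{lem:INTDiagonal}(2), applied in each summand and then summed orthogonally over $j$, writes $(1-p) + e^{i2\pi\sum_{j,k}\widetilde{\theta}_{j,k}p_{j,k}}$ as a product of four commutators $(x_j, y_j) \in U^0(\A)$, $j = 3,\dots,6$, with $\|x_j - 1\|, \|y_j - 1\| < 2\sqrt{2}\|u-1\|^{1/2}$ (since $|\widetilde{\theta}_{j,k}| \le \|a\| + \text{(tiny)}$ while $\|u-1\| = 2\sin(\pi\|a\|)$).

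For the corner I would use verbatim the two-commutator construction from the proof of Lemma~\ref{lem:18CommutatorsApprox}: build $\phi_1, \phi_2 \colon C(X) \to \A$ with $\phi_1(h) = (1-p)U_0(1-p) \oplus \bigoplus_j \psi_j(h)$ and $\phi_2(h) = (1-p) \oplus \bigoplus_j \psi_j(h)$, where $h \in C(X)$ is the identity function and the $\psi_j$ are finite-rank evaluations at many points of $X$ (using that the $m_j$ are large) chosen so that the hypotheses of Theorem~\ref{thm:LinAUE} hold; this produces a unitary $w \in \A$ with $\|\phi_1(f) - w\phi_2(f)w^*\|$ tiny on a suitable finite set. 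By Corollary~\ref{cor:MatrixCommutator}, each of $(1-p) \oplus \bigoplus_j\psi_j(h)$ and $(1-p) \oplus \bigoplus_j \overline{\psi_j(h)}$ — being an orthogonal sum of projections and $2 \times 2$ blocks $diag(t_i, \overline{t_i})$ with $t_i \in X$ — is a single commutator $(V,W)$, resp.\ $(V',W')$, with all factors within $\max_{t_i \in X}|t_i-1|^{1/2} \le (2\|u-1\|)^{1/2}$ of $1$. Since conjugation is isometric, $(x_1, y_1) := (wVw^*, wWw^*)$ and $(x_2,y_2) := (V',W')$ then satisfy $\|x_j - 1\|, \|y_j - 1\| < 2\sqrt{2}\|u-1\|^{1/2}$ for $j = 1,2$ (even though $w$ itself is uncontrolled), and $(x_1,y_1)(x_2,y_2) \approx U_0$.

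Assembling, $\prod_{j=1}^6 (x_j,y_j) \approx U_0\big((1-p) + e^{i2\pi\sum_{j,k}\widetilde{\theta}_{j,k}p_{j,k}}\big)$, so $z := \left(\prod_{j=1}^6 (x_j,y_j)\right)^{-1}u$ is, up to the accumulated (tiny) errors, the central exponential $e^{i2\pi g1_{\I}}$ supported on $p$ (where $g1_{\I} = \sum_j g_j 1_{\M_{m_j}(C[0,1])}$); taking all tolerances small gives $\|z - 1\| < \epsilon$. The remaining point — and the part I expect to be the real obstacle — is to arrange $\tau(Log(z)) = 0$ for all $\tau$, since $\Delta_T(z) = 0$ by itself does not force this (small nonzero elements of $T(K_0(\A))$ exist). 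I would handle it by the same $K_0$-bookkeeping as in Lemmas~\ref{lem:18CommutatorsApprox} and~\ref{lem:20CommutatorsApprox}: because $\tau(a) = 0$, the auxiliary projections ($q,r$ and their lifts $q',r' \in \M_\infty(\I)$) of that argument may be taken to be $0$, and the residual defect — which sits in $\tau(g1_{\I})$ and is balanced against the corner by $\tau((1-p)a(1-p)) = -\tau(a_1)$ — is absorbed by replacing $(1-p)U_0(1-p)$ in $\psi_0$ by a small self-adjoint perturbation supported on $1-p$ (which leaves the norm bounds on $(x_1,y_1),(x_2,y_2)$ untouched) together with the integer-multiple-of-projection shifts that do not change the value of $e^{i2\pi(\cdot)}$; controlling $\tau(1-p)$ to be small relative to $\sup_{\tau\in T(\I)}|\tau(a_1)|$ — both of which we control — keeps this perturbation tiny. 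This proves Part~(1). For Part~(2), by the polar decomposition $x = u|x|$ and Lemma~\ref{lem:DeterminantPolarDecomposition} one has $\Delta_T(u) = \Delta_T(|x|) = 0$ and $\tau(Log(|x|)) = 0$, and both $u$ and $|x|$ are close to $1$; Part~(1) handles $u$ with $6$ commutators and a unitary remainder, and $|x| = e^b$ ($b = b^*$, $\tau(b) = 0$) is handled by the identical argument with Lemma~\ref{lem:INTDiagonal}(2), Corollary~\ref{cor:MatrixCommutator} and \cite{ThomsenCircle} Lemma~1.9 replaced by Lemma~\ref{lem:InvertibleDiagonal}, \cite{ThomsenCommutators} Lemma~2.6 and the same diagonalization, giving $6$ more commutators; commuting the first remainder past the second block of commutators (isometric conjugation) and multiplying the two remainders (whose logarithms add modulo additive commutators, by \cite{HarpeSkandalisI} Lemma~3) produces $x = \left(\prod_{j=1}^{12}(x_j,y_j)\right)z$ with the stated properties, the looser constant $24$ absorbing the bounds $\|u-1\|, \||x|-1\| \le C\|x-1\|$ coming from the polar decomposition.
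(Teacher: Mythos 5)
Your overall scheme tracks the paper's proof closely: Lin's exponential theorem to write $u = e^{i2\pi a}$ with $\tau(a)=0$ and small spectrum; TAI approximation with a subalgebra $\I\in\INT$ of large matrix sizes (Lemma~\ref{lem:AsymptoticSubalgebras} plus Remark~\ref{rem:TAIMatrixSize}); Thomsen's diagonalization in $\I$ followed by subtraction of fibrewise means to get a trace-zero self-adjoint $a_2$; Lemma~\ref{lem:INTDiagonal}(2) for four inner commutators with controlled norms; Theorem~\ref{thm:LinAUE} and Corollary~\ref{cor:MatrixCommutator} for two corner commutators, correctly noting that conjugation by the AUE unitary $w$ is isometric so the bound survives. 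You also correctly spot the key simplification over Lemma~\ref{lem:18CommutatorsApprox}: since $\tau(a)=0$ the $q',r'$ bookkeeping disappears, which is exactly what the paper does (condition (*)(d) becomes $|\tau(a_1)|<\delta_1/10$).

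Where your argument has a genuine gap is the last step, $\tau(Log(z))=0$ for all $\tau$. You identify this as ``the real obstacle,'' remark (correctly but irrelevantly) that $\Delta_T(z)=0$ alone would not force it, and then propose to restore exactness by perturbing $(1-p)U_0(1-p)$ inside $\psi_0$ and invoking ``integer-multiple-of-projection shifts.'' That route does not work as described: (a) the ``shifts'' belong to the $q',r'$ bookkeeping you have already discarded; (b) perturbing $\psi_0$ changes the conjugating unitary $w$ produced by Theorem~\ref{thm:LinAUE} in an uncontrolled way, so there is no reason the net effect on $\tau(Log(z))$ can be dialed exactly to zero; and (c) you would still only be controlling sizes of errors, not forcing an identity. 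The paper's argument is direct and needs no perturbation: record that each corner commutator equals $e^{i2\pi b_j}$ ($j=1,2$) with $\tau(b_j)=0$ (because $(x_1,y_1)=w(V,W)w^*$ where $(V,W)=(1-p)\oplus\bigoplus_j\psi_j(h)$, whose logarithm has trace zero by the pairwise $t_i$ vs.\ $\overline{t_i}$ cancellation, and $\tau$ is conjugation-invariant; similarly for $(x_2,y_2)$), and that the inner product of commutators equals $e^{i2\pi a_2}$ with $\tau(a_2)=0$; then since $u=e^{i2\pi b_1}e^{i2\pi b_2}e^{i2\pi a_2}z$ with all four factors close enough to $1$ that \cite{HarpeSkandalisI} Lemma~3(b) applies, $\tau(a)=\tau(b_1)+\tau(b_2)+\tau(a_2)+\frac{1}{2\pi i}\tau(Log(z))$, forcing $\tau(Log(z))=0$. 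This is the one missing idea; with it your proof would match the paper's. Your treatment of Part~(2) via polar decomposition and conjugating the second block past the first remainder is the same as the paper's sketch and is fine.
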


\begin{proof}

The argument of Part (1) is a variation on the argument of Lemma
\ref{lem:18CommutatorsApprox} Part (1), where we need to control the
norm distance to the unit of the operators that make up the commutators.
We go through the proof for the convenience of the reader.

Let $X \subseteq \T$ be the compact subset given by
$$X =_{df} \{ t \in \T : |t - 1 | \leq 2 \| u - 1 \| \}.$$
(Note that $1 \in X$, and $X$ is either $\T$ or homeomorphic to $[0,1]$.)

Let $\epsilon > 0$ be given.
Contracting $\epsilon$ if necessary, we may assume that 
$0 < \epsilon < min \{ 1/100, \| u -1 \| \}$
and that $\epsilon > 0$ is small enough so that for every unitary
$v \in \A$, if $\| u - v \| < \epsilon$ then $sp(v) \subseteq X$.

Since $\| u - 1 \| < \sqrt{2}/10$, $a =_{df} (1/ (i 2 \pi )) Log(u) \in 
\A_{s a}$ and  
\begin{equation}
u = e^{i2 \pi a}.
\label{equ:CloseTo1Exponential}
\end{equation}
Hence, $\tau(a) = 0$ for all $\tau \in T(\A)$.
Also, $sp(a) \subset (-\pi/2, \pi/2)$.

Let $\delta_1 > 0$ be such that for all self-adjoint elements
$c, c' \in \A$ if $\| c - c' \| < \delta_1$
then $\| e^{i 2 \pi c} - e^{i 2 \pi c'} \| < \epsilon/10$.  We may assume that
$\delta_1 < \epsilon/10$ and that for all $0 < \delta'_1 \leq \delta_1$, 
$(\delta'_1 + sp(a)) \cup (-\delta'_1 + sp(a)) \subset (-\pi/2, \pi/2)$.

Let $F : (0,1) \rightarrow (0,1)$ be the nondecreasing map
given by $F(t) =_{df} t/10$ for all $t \in (0,1)$.
Let $\F \subset C(X)$ be a finite subset that contains the identity
function $h(t) =_{df} t$ ($t \in X$).

Plug $X$, $\epsilon/10$ (for $\epsilon$), $\F$ and $F$ into Theorem
\ref{thm:LinAUE}
to get $\eta_1 > 0$,
$\delta_3 > 0$, a finite subset $\G \subset C(X)$, a finite subset
$\mathcal{P} \subset \underline{K}(C(X))$ and
a finite subset $\mathcal{U} \subset U(\M_{\infty}(C(X))$ satisfying
the conclusions of Theorem \ref{thm:LinAUE}.

Note that $X$ is closed under complex conjugates. Hence,
let $S =_{df} \{ 1, t_1, \overline{t_1},  t_2, \overline{t_2},
 ..., t_N, \overline{t_N} \} \subset X$ be a finite collection of $2N + 1$
distinct points and $\eta_2 > 0$
such that for all $s \geq  \eta_1$ for all open balls
$O_s$ in $X$ with radius $s$,
\begin{equation}
card(O_s \cap S)(1 - \eta_2)/(2N + 1) > F(s),
\label{equ:June10,2:40PM}
\end{equation}
where $card(O_s \cap S)$ is the cardinality of $O_s \cap S$.

Let $\Phi : C(X) \rightarrow \A$ be the unital *-homomorphism
given by $\Phi(h) =_{df} e^{i2\pi a} = u$,
where $h \in C(X)$ is the identity map (i.e., $h(t) = t$ for all $t \in X$).
Note that 
the spectrum of $u = e^{i 2 \pi a}$
is contained in
$X$; so $\Phi$ is well-defined.
Let $N_1 \geq 1$ be an integer so
that $\mathcal{U} \subset \M_{N_1}(C(X))$.
Let $M_1 \geq 1$ be an integer and  let
$\F_1 \subset \M_{N_1}(\A)$ be
a finite set of self-adjoint elements so that
for all $v \in \mathcal{U}$, there exist self--adjoint elements
$a_{v,1}, a_{v,2}, ..., a_{v, M_1} \in \F_1$ (repetitions
allowed) so that $\Phi(v) = e^{i2 \pi a_{v, 1}}
e^{i 2 \pi a_{v, 2}} ... e^{i 2 \pi a_{v, M_1}}$.
(Note that $K_1(\Phi) = 0$.)

Choose $\delta_4 > 0$ such that if $u_1, u_2, ..., u_{M_1} \in U(\M_{N_1}(\A))$
are unitaries such that
$dist(u_j, \overline{DU(\M_{N_1}(\A))}) < \delta_4$ for $1 \leq j \leq M_1$
then $dist(u_1 u_2 ... u_{M_1}, \overline{DU(\M_{N_1}(\A))}) < \delta_3/10$.
We may assume that $\delta_4 < \delta_3/10$.

Plug $\delta_4/10$ (for $\epsilon$) and $\M_{N_1}(\A)$ (for $\A$)
into Lemma \ref{lem:DistToDU} to get $\delta_5 > 0$.

Choose integer $N_2 \geq 1$ such that
$1/N_2 <  \eta_2/10$.
Also choose $N_3 \geq 1$ such that
$N_3 \geq max  \{ \| b \| : b \in \G \cup \F_1 \}$.

Since $\A$ is TAI and by Lemma \ref{lem:AsymptoticSubalgebras} and
Remark \ref{rem:TAIMatrixSize}, let $p \in \A$ be a projection and let
$\I \in \INT$ be a C*-subalgebra of $\A$ with $1_{\I} = p$
such that the following
hold:
\begin{enumerate}
\item[(a)] $\tau(1_{\A} - p) <
min \{ \frac{\delta_3}{10(1 + N_3)},
\frac{\delta_5}{10(1 + N_3)}, \frac{\eta_2}{10(1 + N_3)} \}$
for all $\tau \in T(\A)$.
\item[(b)] Each summand in $\I$ has matrix size at least $N_2(2 N + 1)$.
(Equivalently, every irreducible represention of $\I$ has
image with the form $M_n$ with $n \geq N_2(2N + 1)$.)
\item[(c)] There exists $a_1 \in \I$ such that
$\| a - ((1-p)a(1-p) + a_1) \| < \delta_1/10$
and $\| e^{i 2 \pi a} - e^{i 2 \pi ((1-p)a(1-p) + a_1)} \| < \epsilon/10$.
Note that for $0 < \delta'_1 \leq \delta_1$,
$sp(a_1) \cup (\delta'_1/10 + sp(a_1)) \cup (-\delta'_1/10 + sp(a_1))
\subset (-\pi/2, \pi/2)$.  
\item[(d)] $|\tau(a_1)| < \delta_1/10$
for all $\tau \in T(\I)$.
\item[(e)]   Let $\psi_0 : C(X) \rightarrow (1-p) \A (1- p)$ be the
unital *-homomorphism given by $\psi_0(h) =_{df}
(1- p) e^{i 2 \pi (1-p)a(1-p)} (1 - p)$,
where $h \in C(X)$ is the identity map (i.e., $h(t) = t$ for all $t \in X$).
(Note that by (\ref{equ:CloseTo1Exponential}), by (c) and our
assumptions on $\epsilon$, $p$ can be chosen so that
the spectrum of $e^{i 2 \pi (1-p) a (1-p)}$ is contained in $X$; so the
map $\psi_0$ is well-defined.)

Then   for all $v \in \mathcal{U}$,
\begin{eqnarray*}
& & \| \psi_0(v) - (1-p)e^{i 2\pi (1 - p)a_{v,1}(1-p)}
e^{i 2\pi (1 - p)a_{v,2}(1-p)}....e^{i 2\pi (1 - p)a_{v,M_1}(1-p)}(1-p) \| \\
& < &  \delta_3/10.
\end{eqnarray*}
(Here, we identify $1_{\A} -p$ with $(1_{\A} - p) \otimes 1_{\M_{N_1}}
\in \M_{N_1}(\A)$.)
\end{enumerate}
We denote the above statements by ``(*)".

  Since  $\I \in \INT$, let us suppose, to simplify notation,
that $\I$ has the form
$$\I = \bigoplus_{j=1}^{N_4} \M_{m_j}(C[0,1])$$
where $N_4 \geq 1$.  The proof for the other cases are similar.

  We now construct two unital *-homomorphisms
$\phi_1, \phi_2 : C(X) \rightarrow \A$.

   By (*), we have that for $1 \le j \leq N_4$,
$m_j \geq N_2 (2N + 1)$.
For each $j$,
let
$\psi_j : C(X) \rightarrow \M_{m_j}(C[0,1])$ be the
(finite rank) unital *-homomorphism given by
\begin{eqnarray*}
& & \psi_j(f) \\
& =_{df} &
diag(f(1), f(t_1), f(\overline{t_1}), f(t_2), f(\overline{t_2}), ....,
f(t_N), f(\overline{t_N}), f(1), f(t_1),\\
& & f(\overline{t_1}), f(t_2),
f(\overline{t_2}), ..., f(t_N), f(\overline{t_N}), f(1), f(t_1),
f(\overline{t_1}),.....)
\end{eqnarray*}
for all $f \in C(X)$,
where the tail of the diagonal either has the form
``$...f(t_l), f(\overline{t_l}))$" or has the form
``$...f(t_l), f(\overline{t_l}), f(1))$".

Let $h \in C(X)$ be the identity function, i.e., $h(t) = t$ for
all $t \in X$.

We define the unital *-homomorphisms $\phi_1, \phi_2 : C(X) \rightarrow \A$
in the following manner:

$$\phi_1(h) =_{df} \psi_0(h)
\oplus \bigoplus_{j=1}^{N_4} \psi_j(h)$$
and
$$\phi_2(h) =_{df} (1- p) \oplus  \bigoplus_{j=1}^{N_4} \psi_j(h).$$

   From (*), (\ref{equ:June10,2:40PM})
and our choices of $N_2$, $N_3$  and $\eta_2$,
we have the following statements:
\begin{equation} \label{equ:June10A} \end{equation}
\begin{enumerate}
\item[(i.)] $\mu_{\tau \circ \phi_2}(O_s) \geq F(s)$ for all
$s \geq \eta_1$, for all open balls $O_s$ in $X$ with radius $s$ and
for all $\tau \in T(\A)$.
\item[(ii.)]  $| \tau \circ \phi_1(f) - \tau \circ \phi_2(f) | < \delta_3/2$
for all $f \in \G$ and for all $\tau \in T(\A)$.
\end{enumerate}

   Next, since $X$ is either $\mathbb{T}$ or homeomorphic to $[0,1]$
and since the image of $h$ (under both $\phi_1$ and $\phi_2$) is
contained in $U^0(\A)$,
\begin{equation}
\underline{K}(\phi_1) = \underline{K}(\phi_2).
\label{equ:June10B}
\end{equation}

   Finally, from (*) (a), we have that
$|\tau((1-p)b(1-p))| < \delta_5/10$ for all $b \in \F_1$ and for all
$\tau \in T(\M_{N_1}(\A))$.
It follows, from the definition of $\delta_5$ and Lemma \ref{lem:DistToDU},
that $dist(e^{i 2 \pi (1-p) b (1- p)}, \overline{DU(\M_{N_1}(\A))}) <
\delta_4/10$ for all $b \in \F_1$.
From the definition of $\delta_4$ and the definition of $\F_1$,
it follows that for all $v \in \mathcal{U}$,
$$dist(e^{i2 \pi (1-p)a_{v,1}(1-p)}e^{i2 \pi (1-p)a_{v,2}(1-p)} ...
e^{i2 \pi (1-p)a_{v,M_1}(1-p)}, \overline{DU(\M_{N_1}(\A))}) < \delta_3/10.$$
From this and (*) (e), we have that for all $v \in \mathcal{U}$,
\begin{equation}
dist(\psi_0(v) \oplus p, \overline{DU(\M_{N_1}(\A))}) < \delta_3/5.
\label{equ:June10C}
\end{equation}
Also, $(1-p)\phi_2(\M_{N_1}(C(X)))(1 - p) \subseteq
\M_{N_1}(\mathbb{C}(1-p))$ .
%(Here again, the first two copies of $1-p$ mean
%$(1_{\A} - p) \otimes 1_{\M_{N_1}}$.)
Hence, for all $v \in \mathcal{U}$,
there exists self--adjoint $c \in \M_{N_1}(\mathbb{C}(1-p))$
with $\| c \| \leq 1$ such that
$(1-p)\phi_2(v)(1-p) \oplus p = e^{i 2 \pi c}$.
Note that this and (*) (a) implies that
$|\tau(c)| < \delta_5/10$ for all $\tau \in T(\M_{N_1}(\A))$.
From this, the definition of $\delta_5$ and since
$\delta_4 < \delta_3/10$,
we have that for all $v \in \mathcal{U}$,
$dist((1-p)\phi_2(v)(1-p) \oplus p, \overline{DU(\M_{N_1}(\A))}) < \delta_3/10$.
From this, the definitions of $\phi_1$,  $\phi_2$ and (\ref{equ:Feb10.2012.5:48PM}), we have that for all $v \in \mathcal{U}$,
\begin{equation}
dist(\phi_1^{\ddagger}(\overline{v}), \phi_2^{\ddagger}(\overline{v}))
< \delta_3.
\label{equ:June10D}
\end{equation}

From (\ref{equ:June10A}), (\ref{equ:June10B}),
(\ref{equ:June10D}) and
from Theorem \ref{thm:LinAUE},
there exists a unitary $w \in \A$ such that for all
$f \in \F$,
$$\| \phi_1(f) - w \phi_2(f) w^* \| < \epsilon/10.$$

Since the identity function $h$ (i.e., $h(t) =_{df} t$ for all $t \in X$) is
an element of $\F$,
it follows that
$$\| ((1- p)e^{i 2\pi (1-p)a (1-p)}(1-p) \oplus \bigoplus_{j=1}^{N_4}
\psi_j(h))  - w ( (1-p) \oplus \bigoplus_{j=1}^{N_4} \psi_j(h)) w^* \|
< \epsilon/10.$$

From this and Corollary \ref{cor:MatrixCommutator},
there exist unitaries $x_1, y_1, x_2, y_2 \in \A$ such that
\begin{equation}
\| (x_1, y_1) - ((1- p)e^{i 2\pi (1-p)a (1-p)}(1-p) \oplus \bigoplus_{j=1}^{N_4}
\psi_j(h))\| < \epsilon/10
\label{equ:June10E}
\end{equation}
and
\begin{equation}
(x_2, y_2) = (1-p) \oplus \bigoplus_{j=1}^{N_4} \overline{\psi_j(h)}
\label{equ:June10F}
\end{equation}
Moreover, since $|t - 1 | \leq 2 \| u - 1 \| < \sqrt{2}$ 
for all $t \in X$,
it follows, by Corollary \ref{cor:MatrixCommutator}, that 
$\| x_j - 1 \|, \| y_j - 1 \| \leq \sqrt{2} \| u - 1 \|^{1/2}$
for $j = 1, 2$. 

   Finally, by inspection (and the definition of $X$), 
we see that there exist $b_1, b_2 \in \A_{sa}$
with $(x_j, y_j) = e^{i 2 \pi b_j}$, $\| b_j \| < 1$,  
$\tau(b_j) = 0$ for all $\tau \in T(\A)$, and 
$\| e^{i 2 \pi b_j} - 1 \| \leq 2 \| u - 1 \| \leq \sqrt{2}/50$,  
for $j = 1,2$.

    By \cite{ThomsenCircle} Lemma 1.9,
there exist real-valued continuous functions
$\theta_{j,k} : [0,1] \rightarrow \mathbb{R}$ ($1 \leq j \leq N_4$,
$1 \leq k \leq m_j$), and there exist pairwise orthogonal
minimal projections $p_{j,k} \in \M_{m_j}(C[0,1])$ (again
$1 \leq j \leq N_4$, $1 \leq k \leq m_j$) with
$\sum_{k=1}^{m_j} p_{j,k} = 1_{\M_{m_j}(C[0,1])}$ for
$1 \leq j \leq N_4$ such that
(a) $\theta_{j,1} \leq \theta_{j,2} \leq ... \leq \theta_{j,m_j}$
for $1 \leq j \leq N_4$ and
(b) $\sum_{j=1}^{N_4} \sum_{k=1}^{m_j} \theta_{j,k} p_{j,k}$ is
approximately unitarily equivalent to
$a_1$ in $\I$.
Note that the spectrum of $\I$ is $\widehat{\I} =
\bigsqcup_{j=1}^{N_4} \widehat{\M_{m_j}(C[0,1])} =
\bigsqcup_{j=1}^{N_4} [0,1]$;
and so, for all $s \in \widehat{\I}$, the spectrum
of $a_1(s)$ is
$\{ \theta_{j,k}(s) : 1 \leq j \leq N_4 \makebox{  and  }
1 \leq k \leq m_j \}$ (counting multiplicity; and where
if $s \notin \widehat{\M_{m_j}(C[0,1])}$ then
we define $\theta_{j,k}(s) = 0$, for all $j,k$).

    Hence,
replacing $\sum_{j=1}^{N_4} \sum_{k=1}^{m_j} \theta_{j,k} p_{j,k}$
by a unitary equivalent (in $\I$) self-adjoint element if necessary,
we may assume that
\begin{equation}
\| a_1 - \sum_{j=1}^{N_4} \sum_{k=1}^{m_j} \theta_{j,k} p_{j,k} \|
< \delta_1/10.
\label{equ:June10G}
\end{equation}
(Note that a unitary equivalence is the same as simultaneously
replacing the projections
$p_{j,k}$ by unitarily equivalent projections, with the same unitary
for all the projections. In particular, the eigenvalue functions
$\theta_{j,k}$ stay the same.)

Moreover, by (*) (d) and our assumptions on
$\theta_{j,k}$, $p_{j,k}$, for all $\tau \in T(\I)$,
\begin{equation}
\left|\tau\left(\sum_{j=1}^{N_4} \sum_{k=1}^{m_j} \theta_{j,k} p_{j,k} \right)
 \right| < \delta_1/10.
\label{equ:June10H}
\end{equation}

Let $g : \widehat{\I} = \bigsqcup_{j=1}^{N_4} \widehat{\M_{m_j}(C[0,1])}
\rightarrow \mathbb{R}$
be the continuous function defined as follows:

For $s \in \widehat{\M_{m_j}(C[0,1])} \cong [0,1]$,
$$g(s) =_{df}  (1/m_j)\sum_{k=1}^{m_j} \theta_{j,k}(s).$$ 

Hence, $g 1_{\I} \in \I$ is a self-adjoint element, and by
(\ref{equ:June10H}),
\begin{equation}
\| g 1_{\I} \| < \delta_1/10
\label{equ:June10I}
\end{equation}
and
\begin{equation}
\tau\left(\sum_{j=1}^{N_4} \sum_{k=1}^{m_j} \theta_{j,k} p_{j,k} \right)
 - \tau(g 1_{\I}) = 0
\label{equ:June10J}
\end{equation}
for all $\tau \in T(\I)$.

Let $a_2 =_{df} \sum_{j=1}^{N_4} \sum_{k=1}^{m_j} \theta_{j,k} p_{j,k}
- g 1_{\I}$. By (*) (c) and (\ref{equ:June10I}), $sp(a_2) \subset (-\pi/2, 
\pi/2)$.   
Also, by (\ref{equ:June10J}),   we have that 
$\tau(a_2) = 0$ for all $\tau \in T(\I)$.

Hence, by Lemma \ref{lem:INTDiagonal} Part (2) (and by 
conjugating with an appropriate permutation unitary if necessary)
there exist unitaries $x_3, y_3, x_4, y_4, x_5, y_5, x_{6}, y_{6}$
in $\A$ such that
\begin{equation}
e^{i 2 \pi a_2} = (x_3, y_3)(x_4, y_4) (x_5, y_5) (x_{6}, y_{6}) 
\label{equ:June10K}
\end{equation}
and for $3 \leq j \leq 6$,  
$\| x_j - 1 \| \leq \sqrt{2} \| e^{i 2 \pi a_2} - 1 \|^{1/2}$. 
Note that by the definition of $\delta_1$ and our assumptions on 
$\epsilon$,   
$\| e^{i 2 \pi a_2} - 1 \| \leq \| e^{i 2 \pi a_2} - e^{i 2 \pi a_1} \| 
+ \| e^{i 2 \pi a_1} - 1 \| 
\leq \epsilon/10 + \| e^{i 2 \pi ((1-p) a (1- p)) + a_1)} - 1 \|
\leq (1/10)\| u - 1 \| + \| e^{i 2 \pi ((1-p) a (1- p)) + a_1)} 
- e^{i 2 \pi a} \| + \| e^{i 2 \pi a} - 1 \| 
\leq (1/10) \| u - 1 \| + \epsilon/10 + \| u - 1 \| 
\leq 2 \| u - 1 \|$. 

Hence, $\|e^{i 2 \pi a_2} - 1 \| \leq \sqrt{2}/50$ and 
$\| x_j - 1 \| \leq 2 \| u - 1 \|^{1/2}$.
Similarly, $\| y_j - 1 \| \leq 2 \| u - 1 \|^{1/2}$ for
$3 \leq j \leq 6$. 

%(Actually, for $3 \leq j \leq 18$,
%$p x_j p, p y_j p \in \I$,
%$x_j = p x_j p \oplus (1-p)$ and $y_j = p y_j p \oplus (1-p)$.)

   From the definitions of $a_2$ and $\delta_1$,  
$$\| e^{i 2 \pi a_1} - \prod_{j=3}^{6} (x_j, y_j) \| < \epsilon/5.$$

From this, (\ref{equ:CloseTo1Exponential}),
(*) statement (c),  (\ref{equ:June10E})
and (\ref{equ:June10F}), we have that
$$\| u - \prod_{j=1}^{6} (x_j, y_j) \| < \epsilon$$
and $\| x_j - 1 \|, \| y_j - 1 \| \leq 2 \sqrt{2} \| u - 1 \|^{1/2}$
for $1 \leq j \leq 6$.

Hence, $u = \left( \prod_{j=1}^6 (x_j, y_j) \right) z$ where
$z \in U^0(\A)$ and $\| z - 1 \| < \epsilon$ (which is $< 1/100$
by our hypotheses on $\epsilon$).
Hence,
$e^{i 2 \pi a} = e^{i 2 \pi b_1} e^{i 2 \pi b_2} e^{i 2 \pi a_2} z$.
But 
$(1 - \| e^{i 2 \pi b_1} - 1 \|)(1 - \| e^{i 2 \pi b_2} - 1 \|)
(1 - \| e^{i 2 \pi a_2} - 1 \|)(1 - \| z - 1 \|) 
\geq (1 - \sqrt{2}/50)^3 99/100 > 1/2$.
Hence, by \cite{HarpeSkandalisI} Lemma 3(b),
$\tau(a) = \tau(b_1) + \tau(b_2) + \tau(a_2) + (1/ (2 \pi i)) \tau(Log(z))$
for all $\tau \in T(\A)$.
Hence, $\tau(Log(z)) = 0$ for all $\tau \in T(\A)$.

Next, we sketch the proof of Part (2).

Let $x = u |x|$ be the polar decomposition of $x$.
Since $\| x - 1 \| < 1/1000$,
$\| |x| - 1 \| < 2001/1000000$ and
$\| u - 1 \| < 3001/1000000 < \sqrt{2}/100$.

Also, by Lemma \ref{lem:DeterminantPolarDecomposition}, 
$\Delta_T(|x|) = \Delta_T(u) = 0$,
and $\tau(Log(|x|)) = 0$ for all $\tau \in T(\A)$.
Hence, by \cite{HarpeSkandalisI} Lemma 3(b),
$\tau(Log(u)) = 0$ for all $\tau \in T(\A)$.

  Hence, by Part (1), 
there exist unitaries $x'_j, y'_j, v \in U^0(\A)$, $1 \leq j \leq 6$,
such that 
$$u = \left( \prod_{j=1}^6 (x'_j, y'_j) \right) v$$
$$\| v - 1 \| < \min \{ \epsilon/10, 1/100 \}$$
$$\tau(Log(v)) = 0$$
for all $\tau \in T(\A)$, and 
$$\| x'_j - 1 \|, \makebox{   } \| y'_j - 1 \| <   2 \sqrt{2} \| u - 1 \|^{1/2} 
\leq 4 \sqrt{2} \| x - 1 \|^{1/2}$$
for $1 \leq j \leq 6$.\\

Claim:  There exist invertibles $x_j, y_j, z' \in GL^0(\A)$, 
$7 \leq j \leq 12$, such that 
$$|x| = \left( \prod_{j=7}^{12} (x_j, y_j) \right) z'$$
$$\| z' - 1 \| < \min \{ \epsilon/10, 1/100 \}, \makebox{   } 
\tau(Log(z')) = 0$$
for all $\tau \in T(\A)$,
and 
$$\| x_j - 1 \|, \makebox{   } \| y_j - 1 \| < 8 \| |x| - 1 \|^{1/2}$$
for $7 \leq j \leq 12$.\\ 

\emph{Sketch of proof of Claim.}\\
The proof is similar to the proof of Part (1) (also similar
to the proof of Lemma \ref{lem:18CommutatorsApprox}).    
Here are the main differences:
\begin{enumerate}
\item[i.] Let $s_0 =_{df} min ( sp(|x|) \cup sp(|x|^{-1}) )$ and
$s_1 =_{df} max ( sp(|x|) \cup sp( |x|^{-1}))$.   
Then take $X =_{df} [s_0/2, 2 s_1]$. 
\item[ii.] In the proof of Part (1), Corollary \ref{cor:MatrixCommutator}
and Lemma \ref{lem:INTDiagonal} Part (2)
should be replaced with \cite{ThomsenCommutators} Lemma 2.6 and
(this paper) Lemma \ref{lem:InvertibleDiagonal} respectively.   
\end{enumerate}
\emph{End of sketch of proof of the Claim.}\\

Note that from the Claim, it follows that 
for $7 \leq j \leq 12$,
$$\| x_j - 1 \| \leq 24 \| x - 1 \|^{1/2}$$
and 
$$\| y_j - 1 \| \leq 24 \| x - 1 \|^{1/2}.$$

From the above, we have that 
\begin{eqnarray*}
& & x \\
& = &  u |x|\\
& = & \left( \prod_{j=1}^6 (x'_j, y'_j) \right) v \left( 
\prod_{j=7}^{12} (x_j, y_j) \right) z'\\
& = &  \left(\prod_{j=1}^6 ( x'_j,  y'_j)\right)
\left(\prod_{j=7}^{12} ( v x_j v^*,  v y_j v^*) \right) v z'\\  
\end{eqnarray*} 

Let $z =_{df} v z'$.
Then $\| z - 1 \| < \epsilon$.
Also, 
$(1 - \| v - 1 \|)(1 - \| z' - 1 \|) \geq (99/100)^2 > 1/2$.
Hence,  
by \cite{HarpeSkandalisI} Lemma 3(b), 
$\tau(Log(z)) =  \tau(Log(v)) + \tau(Log(z')) = 0$ for all $\tau \in T(\A)$.
\end{proof}

Next, towards the proof of Theorem \ref{thm:TAIFirstTh}, we 
slightly reword \cite{HarpeSkandalisII} Lemma 5.17 for the case of 
interest: 

\begin{lem}
Let $\A$ be a unital C*-algebra with cancellation of projections
and two projections $p, q \in \A$ with $p + q = 1$ 
and $u \in \A$ a partial isometry  
such that $u^* u = p$ and $u u^* \leq q$.

Say that $x \in U^0(\A )$ with $x -1 \in p \A p$ and
$\| x - 1  \| < 1$.

Then there exist $v, w \in U^0(\A)$ and $y \in U^0( \A )$ with
$y -1 \in q \A q$
such that 
$x = (v, w)y$,
$\| y - 1 \| = \| x - 1 \|$,
$\max \{ \| v - 1 \|, \| w - 1 \| \} \leq \| x - 1 \|^{1/2}$,
and $T(Log(y)) = T(Log(x))$.
\label{lem:HSLemma5.17}
\end{lem}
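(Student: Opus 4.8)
The plan is to reduce to the standard ``halving trick'' of de la Harpe and Skandalis. Since $u$ is a partial isometry with $u^*u = p$ and $uu^* \leq q$, the algebra $\A$ contains a copy of $\M_2$ worth of matrix units ``between $p$ and a subprojection of $q$'': set $e =_{df} uu^* \leq q$, so that $u$ implements a Murray--von Neumann equivalence $p \sim e$. First I would transport the element $x$ across this equivalence: define $x_1 =_{df} u x u^* + (1 - e) \in U^0(\A)$, so that $x_1 - 1 \in e\A e$, $\|x_1 - 1\| = \|x - 1\|$, and $T(\mathrm{Log}(x_1)) = T(\mathrm{Log}(x))$ (the last equality because $u(\,\cdot\,)u^*$ together with the unchanged complement does not alter the universal trace of the logarithm — the point being that $e \sim p$ and traciality of $T$). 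The key observation is then that $x$ and $x_1$ are ``the same unitary on orthogonal corners'': $x$ lives on $p$, $x_1$ lives on $e \leq q$, and $p \perp e$.

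Next I would realize $x x_1^{-1}$ (equivalently $x \oplus x_1^*$ acting on $p + e \leq 1$) as a single multiplicative commutator. This is precisely the content of \cite{HarpeSkandalisII} Lemma 5.17 (or the Brown--Pearcy-type computation): because $p \sim e$ via $u$ and $p \perp e$, there is a unitary $v$ permuting the two corners (a ``flip'' built from $u$, $u^*$, $p$, $e$, and $1 - p - e$) and a unitary $w$ supported on $p + e$ and built functionally from $x$, such that
\[
(v,w) = x \,\oplus\, x^{*} \ \ \text{on } p + e, \qquad \|w - 1\| \leq \|x - 1\|^{1/2}, \quad \|v - 1\| \leq \|x-1\|^{1/2}.
\]
Here $\|w-1\|\le \|x-1\|^{1/2}$ comes from the fact that $w$ is, on each corner, a square-root-type function of $x$ (as in Lemma \ref{lem:DHSDiagonalCommutator} and Corollary \ref{cor:MatrixCommutator}, which give exactly this kind of $|\alpha - 1|^{1/2}$ estimate), and $\|x-1\|<1$ guarantees the relevant logarithms/square roots are defined by continuous functional calculus on $\mathrm{sp}(x)$. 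Both $v$ and $w$ are homotopic to $1$ since they are supported in the corner $(p+e)\A(p+e)$ and are norm-close to $1$ there (or directly exponentials), so $v, w \in U^0(\A)$.

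Now set $y =_{df} x \,(v,w)^{-1}$. By construction $(v,w) = x \oplus x^*$ on $p + e$ and equals $1$ on $1 - p - e$, while $x = x \oplus 1 \oplus 1$ in the decomposition $p + e + (1-p-e)$; hence
\[
y = (x \oplus 1 \oplus 1)(x^{-1} \oplus x \oplus 1) = 1 \oplus x \oplus 1,
\]
so $y - 1 \in e\A e \subseteq q\A q$ (using $e \leq q$), $\|y - 1\| = \|x - 1\|$, and $y \in U^0(\A)$. Finally, for the trace identity: $T$ is a homomorphism on logarithms up to the usual small-norm caveat (\cite{HarpeSkandalisI} Lemma 3(b)), and $T(\mathrm{Log}$ of a multiplicative commutator$)=0$; since $y$ and $x$ differ by the commutator $(v,w)$ and all norms involved are $<1$, we get $T(\mathrm{Log}(y)) = T(\mathrm{Log}(x)) - T(\mathrm{Log}((v,w))) = T(\mathrm{Log}(x))$. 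Rewriting $x = (v,w)\,y' $ with $y' = (v,w)^{-1}x(v,w)$ conjugate to $y$ — or simply noting $x = (v,w)y$ directly from $y = x(v,w)^{-1}$ after observing $(v,w)$ and $y$ commute (both are block-diagonal in the same decomposition) — yields the statement. The cancellation hypothesis enters only to guarantee that the ``flip'' unitary $v$ can be built globally in $\A$ (i.e. that $1 - p - e$ causes no obstruction) and is the reason this is stated for algebras with cancellation; this, rather than any estimate, is the one spot that needs care, though here it is immediate since $u$ already furnishes the equivalence $p \sim e$ explicitly. The main obstacle is thus purely bookkeeping: tracking the three-way corner decomposition $p + e + (1-p-e)$ consistently through the commutator identity so that the supports and the $\|\cdot\|^{1/2}$ estimate come out exactly as claimed.
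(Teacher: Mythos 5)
The overall skeleton does match the paper: set $e = uu^*$, transport $x$ to $y = uxu^* + (1-e)$ supported in $e\A e \subseteq q\A q$, observe $y$ is unitarily conjugate to $x$ (using cancellation to extend $u$ to a global unitary, since $p \sim e$ forces $q = 1-p \sim 1-e$), and invoke \cite{HarpeSkandalisII} Lemma 5.17 for the commutator. The paper's proof is exactly this one-line citation.

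However, your attempt to unpack what Lemma 5.17 does is wrong in a way that contradicts the very estimate you need. You describe $v$ as ``a flip built from $u$, $u^*$, $p$, $e$, and $1-p-e$'', i.e.\ a unitary that interchanges the corners $p$ and $e$, and then claim $\|v - 1\| \leq \|x-1\|^{1/2}$. But if $v p v^* = e$ with $p \perp e$, then $p v p = 0$, so $p(v-1)p = -p$ and hence $\|v - 1\| \geq 1 > \|x-1\|^{1/2}$ whenever $\|x-1\| < 1$. No ``flip'' unitary can satisfy the stated bound. The naive halving decomposition you sketch ($\mathrm{diag}(x^{1/2}, x^{-1/2})$ conjugated by a swap) is the Brown--Pearcy trick, and it gives only $\|w-1\|$ small, not $\|v-1\|$. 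The whole point of de la Harpe--Skandalis Lemma 5.13 (quoted as Lemma \ref{lem:DHSDiagonalCommutator} here) is that $\mathrm{diag}(\alpha,\bar\alpha)$ can be written as $(v,w)$ with \emph{both} factors norm-close to $1$, via an entirely different explicit construction in $SU(2)$ — neither factor is a permutation. That subtler construction, applied in the corner $(p+e)\A(p+e)$, is what Lemma 5.17 is built from; you cannot replace it with a flip and keep the bound.

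A secondary concern: your argument for $T(\mathrm{Log}(y)) = T(\mathrm{Log}(x))$ runs through \cite{HarpeSkandalisI} Lemma 3(b) and the claim $T(\mathrm{Log}((v,w))) = 0$. Lemma 3(b) needs the factors to be quantitatively close to $1$ (roughly $(1-\|a-1\|)(1-\|b-1\|) > 1/2$), and $\|v-1\|, \|w-1\| \leq \|x-1\|^{1/2} < 1$ alone does not guarantee this. The clean route — which you in fact observed yourself when you first introduced $x_1 = uxu^* + (1-e)$ — is that $y$ is unitarily conjugate to $x$, so $\mathrm{Log}(y) = U\,\mathrm{Log}(x)\,U^*$ and traciality of $T$ gives the identity immediately. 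That is what the paper uses, and it is the only step where cancellation is actually needed.
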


\begin{proof}
This follows immediately from the statement of 
\cite{HarpeSkandalisII} Lemma 5.17, taking $qyq =_{df} u x u^* + q - u u^*$
and noting that $y$ is unitarily equivalent to $x$.  (Here cancellation
is used.)
\end{proof}

\begin{rem} In the proof Theorem \ref{thm:TAIFirstTh},   
we will repeatedly use Lemma \ref{lem:HSLemma5.17} and
\cite{HarpeSkandalisII} Proposition 5.18.  We note that in the latter, if the
C*-algebra $\A$ is infinite dimensional simple TAI, and if the 
starting unitary $x$ satisfies 
$\| x - 1 \| < 1/4$ then all the unitaries and partial unitaries
in the statement 
are in the connected component of the identity.
This follows immediately from the statement of    
\cite{HarpeSkandalisII} 
Proposition 5.18, and from the assumption that $\A$ is simple TAI.
\label{rem:UnitariesConnectedTo1}
\end{rem}

\begin{thm}
Let $\A$ be a unital separable simple TAI-algebra.

\begin{enumerate}
\item
Suppose that $u \in U^0(\A)$ is a unitary such that 
$\Delta_T(u) = 0$.

Then 
there exist unitaries $x_j, y_j \in U^0(\A)$, $1 \leq j \leq 34$, 
such that
$$u = \prod_{j=1}^{34} (x_j, y_j).$$ 
\item Suppose that $x \in GL^0(\A)$ is an invertible such that
$\Delta_T(x) = 0$.

Then 
there exist invertibles $x_j, y_j \in GL^0(\A)$, $1 \leq j \leq 46$,
such that 
$$x = \prod_{j=1}^{46} (x_j, y_j).$$ 
\end{enumerate}
\label{thm:TAIFirstTh}  
\end{thm}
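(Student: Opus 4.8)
The plan is to reduce, by Lemma \ref{lem:20CommutatorsApprox}, to a residual that is close to $1$ and has trivial trace on its logarithm, and then to treat that residual by iterating Lemma \ref{lem:6CommutatorsApprox} together with a sweeping argument modelled on \cite{HarpeSkandalisII} (Lemma 5.17 and Proposition 5.18). For Part (1): given $u\in U^0(\A)$ with $\Delta_T(u)=0$, apply Lemma \ref{lem:20CommutatorsApprox}(1) with $\epsilon$ chosen very small to write $u=\big(\prod_{j=1}^{20}(x_j,y_j)\big)w$, where $w=e^{i2\pi a}$ with $\tau(a)=0$ for all $\tau\in T(\A)$ and $\|a\|$ so small that $\|w-1\|<\sqrt 2/100$ and $Log(w)=i2\pi a$, hence $\tau(Log(w))=0$. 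Thus it suffices to show that any $w\in U^0(\A)$ with $\|w-1\|<\sqrt 2/100$ and $\tau(Log(w))=0$ is a product of $14$ commutators in $U^0(\A)$; for Part (2) the corresponding target, after Lemma \ref{lem:20CommutatorsApprox}(2) supplies $26$ commutators, is $20$ commutators for the residual $e^d$.

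For the residual $w$ I would set $w_0:=w$ and iterate Lemma \ref{lem:6CommutatorsApprox}(1): given $w_n$ (unitary, close to $1$, with trace of log zero), peel off a block $c_{n+1}:=\prod_{j=1}^6(x_j^{(n+1)},y_j^{(n+1)})$ so that $w_n=c_{n+1}w_{n+1}$, choosing the internal $\epsilon$ at stage $n$ small enough that $\|w_{n+1}-1\|$ decays doubly exponentially. Lemma \ref{lem:6CommutatorsApprox}(1) then gives $\max_j\{\|x_j^{(n+1)}-1\|,\|y_j^{(n+1)}-1\|\}\le 2\sqrt 2\,\|w_n-1\|^{1/2}$, so the blocks $c_n$ tend to $1$ in norm with rapidly summable defect; moreover $\tau(Log(w_{n+1}))=0$ for all $\tau$, whence (by \cite{HarpeSkandalisI} Lemma 3(b), since all terms are near $1$ and $w_{n-1}=c_nw_n$) also $\tau(Log(c_n))=0$. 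Since $w_n\to 1$, the infinite product $\prod_{n\ge1}c_n$ converges in norm to $w$. The remaining task is to show that such a norm-convergent \emph{infinite} product of commutator blocks, with rapidly shrinking trace-log-zero factors, is a \emph{bounded} product of commutators.

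This last step is the crux and is where I would spend most of the effort. Since simple TAI algebras are stably finite, there is no honest infinite shift available for an Eilenberg swindle; instead I would exploit the internal structure — property (SP), strict comparison, and cancellation of projections (\cite{LinTAI}) — to build a fast-shrinking sequence of projections and, after the usual TAI decomposition of each tail $\prod_{n\ge N}c_n$ into a matrix-corner part and an $\INT$-part, to realise the successive residuals as unitaries supported in ever-smaller corners dominated by their complements. Repeated application of Lemma \ref{lem:HSLemma5.17} (its partial-isometry hypothesis arranged by comparison and cancellation, and its output unitaries lying in $U^0(\A)$ by Remark \ref{rem:UnitariesConnectedTo1}) then sweeps these residuals through the corners; the norm estimates from Lemma \ref{lem:6CommutatorsApprox} make the resulting telescoping converge, and combining this with \cite{HarpeSkandalisII} Proposition 5.18 for the $\INT$-parts collapses the infinitely many commutators into a fixed number, namely $8$. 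Thus $w$ is a product of $6+8=14$ commutators and $u$ of $20+14=34$ commutators in $U^0(\A)$. Part (2) runs identically with Lemma \ref{lem:6CommutatorsApprox}(2) (giving $12$-commutator blocks) and Lemma \ref{lem:InvertibleDiagonal} in place of Lemma \ref{lem:INTDiagonal}, the tail again costing $8$, for a total of $26+12+8=46$.

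The main obstacle, then, is precisely this finite-algebra form of the commutator swindle: keeping exact track of which corner each commutator lives in, verifying at every stage the partial-isometry hypotheses needed to invoke Lemma \ref{lem:HSLemma5.17}, and arranging the comparison data so that the telescoping of infinitely many sweeps terminates with a \emph{fixed} commutator count rather than an unbounded one. Everything else — the reductions via Lemmas \ref{lem:20CommutatorsApprox} and \ref{lem:6CommutatorsApprox}, the norm convergence of the infinite products, and the bookkeeping of traces of logarithms through \cite{HarpeSkandalisI} Lemma 3(b) — should be routine once that step is in place.
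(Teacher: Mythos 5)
Your high-level plan agrees with the paper in its first move: apply Lemma \ref{lem:20CommutatorsApprox}(1) to pull out $20$ commutators and leave a residual $w=e^{i2\pi a}$ that is close to $1$ and has trace-log zero. After that, though, the order of operations in your plan is off, and the order matters.

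You propose to iterate Lemma \ref{lem:6CommutatorsApprox}(1) \emph{in the whole algebra} to produce blocks $c_1,c_2,\dots$ with $w=\prod_{n\geq 1} c_n$, and then, in a second stage, to ``collapse'' the infinite product using corners, Lemma \ref{lem:HSLemma5.17} and \cite{HarpeSkandalisII} Proposition 5.18. The difficulty is that Lemma \ref{lem:6CommutatorsApprox}, applied to a unitary in $\A$, produces commutator factors that are merely close to $1$; they do not come with any corner support. The blocks $c_n$ from different stages of the iteration therefore have no reason to commute with one another, and there is no orthogonality that would let you gather the infinitely many commutators into finitely many. Trying to ``realise'' the residuals in corners after the fact would require modifying the $c_n$, and that modification is not free. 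The collapse step, as you have set it up, does not go through.

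What the paper actually does is interleave the two stages. It first spends four commutators (two applications of \cite{HarpeSkandalisII} Lemma 5.18, as reworded in Lemma \ref{lem:HSLemma5.17} and Remark \ref{rem:UnitariesConnectedTo1}) to push the residual into a corner $r_1\A r_1$, using a weak-divisibility sequence of pairwise orthogonal $r_n$ with $r_n=p_{n+1}+q_{n+1}$, $p_n\preceq q_n\preceq r_n$. Then, at stage $n$, it applies Lemma \ref{lem:6CommutatorsApprox}(1) \emph{inside the simple TAI corner} $r_n\A r_n$ (so its six commutator factors $y_n^j,z_n^j$ ($1\leq j\leq 6$) satisfy $y_n^j-1,z_n^j-1\in r_n\A r_n$), then applies Lemma 5.18 and Lemma \ref{lem:HSLemma5.17} to shift the residual from $r_n$ through $q_{n+1}$ into $r_{n+1}$, producing three more commutator factors with controlled corner support. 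Only because the $r_n$ are pairwise orthogonal do the factors across different $n$ commute (after a bounded conjugation adjustment), which is precisely what lets the $\prod_n(y_n^j,z_n^j)$ telescope into a \emph{single} commutator for each $j$. The count there is $9$ per stage collapsing to $10$ ($8$ from $j\leq 8$, plus $2$ for the $j=9$ factor split by parity), giving $24+10=34$. Your tally $20+6+8=34$ comes out to the same number, but the $6+8$ decomposition does not correspond to a coherent construction: the $6$-commutator step must be performed inside the shrinking corners, not once up front, and the ``$8$'' is the count of telescoped commutators, not a separate collapse cost.

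So the missing idea is not an Eilenberg swindle substitute but the requirement that the approximation lemma be invoked at each stage \emph{inside} the orthogonal corners $r_n\A r_n$, with the corner-shifting lemmas woven into the same induction and with conjugations to restore corner support for the shifting factors. Without that interleaving there is no orthogonality and hence no telescope.
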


\begin{proof}

  The proof 
 is a modification of the arguments of \cite{HarpeSkandalisII}
(see also \cite{ThomsenCommutators}), subtituting our lemmas
in the appropriate places.  (It is also the multiplicative version
of Thierry Fack's result in \cite{FackCommutators} for additive
commutators.)  For the convenience of the reader, we provide the
proof. 

   Firstly, since $\A$ is simple unital infinite dimensional TAI,
$\A$ has the ordered $K_0$ group of a simple unital real rank zero
C*-algebra (see \cite{LinTAI} Theorem 4.8 and \cite{ElliottGong} Theorem 
4.18).  Hence, since simple infinite dimensional real rank zero C*-algebras
are weakly divisible (\cite{PereraRordam} Proposition 5),   
$\A$ is weakly divisible; i.e., for every nonzero
projection $p \in \A$, for all $n \geq 2$, there is a unital
embedding of $\M_n \oplus \M_{n+1}$ into $p \A p$. 
From this, there exist 
projections $p_n, q_n, r_n$ in $\A$ ($n \geq 1$)
such that the following hold:

\begin{enumerate}
\item $p_1 + q_1 + r_1 = 1$.
\item $p_n \preceq q_n \preceq r_n$, $n \geq 1$.
\item $r_m \perp r_n$ when $m \neq n$.
\item $r_n = p_{n+1} + q_{n+1}$, $n \geq 1$. 
\end{enumerate}
  
(See \cite{FackCommutators} Lemma 3.6, \cite{HarpeSkandalisII} Proposition
6.1 (the proof) and \cite{ThomsenCommutators} Lemma 1.7.)

By Lemma \ref{lem:20CommutatorsApprox} (1),    
there exist $20$ commutators $C_j$ ($1 \leq j \leq 20$) in $U^0(\A)$
and $a \in \A_{sa}$ with $\| e^{i 2 \pi a} - 1 \| < 1/51200$ and
$\tau(a) = 0$ for all $\tau \in T(\A)$ such that 
$u = \left( \prod_{j=1}^{20} C_j \right) e^{i 2 \pi a}$.
By \cite{HarpeSkandalisII} Lemma 5.18 (and also Remark 
\ref{rem:UnitariesConnectedTo1}) to $e^{i 2 \pi a}$, 
there exist commutators $C_{21}, C_{22}$ in $U^0(\A)$ and a unitary
$x'_0 \in U^0(\A)$ 
such that
$u = \left( \prod_{j=1}^{22} C_j \right) x'_0$, 
$x'_0 - 1 \in (q_1 + r_1) \A (q_1 + r_1)$, 
$\| x'_0 - 1 \| < 1/6400$  
and $T(Log(x'_0)) = 0$.   
By applying \cite{HarpeSkandalisII} Lemma 5.18 to $x'_0$,
there exist commutators $C_{23}, C_{24}$ in 
$U^0(\A)$ and a unitary $x_0 \in U^0(\A)$ 
such that 
$u = \left( \prod_{j=1}^{24} C_j \right) x_0$,
$x_0 - 1 \in r_1 \A r_1$,
$\| x_0 - 1 \| < 1/800$
and $T(Log(x_0)) = 0$.

   Following the argument of \cite{HarpeSkandalisII} Proposition 6.1, 
we now construct (by induction)
unitaries $x_n, y^j_n, z^j_n$ ($n \geq 1$, $1 \leq j \leq 9$)
in $U^0(\A)$ with $x_1 = x_0$ such that 
the following hold:
\begin{equation} \label{equ:UnitarySequences}  \end{equation}
\begin{enumerate}
\item[(a)] $\| x_n - 1 \| < 1/(100 n^2)$, 
$T(Log(x_n)) = 0$ and $x_n - 1 \in r_n \A r_n$. 
\item[(b)] $\| y_n^j - 1 \|, \| z_n^j - 1 \| < 2/n$ ($1 \leq j \leq 9$) 
\item[(c)] $y_n^j - 1, z_n^j - 1 \in r_n \A r_n$ ($1 \leq j \leq 8$)
\item[(d)] $y_n^{9} - 1, z_n^{9} - 1 \in (r_n + r_{n+1})\A (r_n + r_{n+1})$
\item[(e)] $x_n = \left( \prod_{j=1}^{9} (y_n^j, z_n^j) \right) x_{n+1}$. 
\end{enumerate}

Suppose that the unitaries $\{ x_m \}_{m=1}^n, \{ y_m^j \}_{m=1}^{n-1}, 
\{ z_m^j \}_{m=1}^{n-1}$ 
($1 \leq j \leq 9$) have already been constructed with $x_1 = x_0$.

Apply Lemma \ref{lem:6CommutatorsApprox} (1) to 
$x_n + r_n - 1$ to get $x'_n, y_n^j, z_n^j \in U^0(\A)$ 
($1 \leq j \leq 6$)  
such that the following hold:
$$x_n = \left( \prod_{j=1}^6 (y_n^j, z_n^j) \right) x'_n$$
$$x'_n - 1 \in r_n \A r_n$$
$$\| x'_n - 1 \| < \frac{1}{51200(n+1)^2}$$
$$T(Log(x'_n)) = 0$$
$$y_n^j - 1, z_n^j - 1 \in r_n \A r_n$$
$$\| y_n^j - 1 \|, \| z_n^j - 1 \| < 2/n$$
for $1 \leq j \leq 6$.

Apply \cite{HarpeSkandalisII} Lemma 5.18 to $p_{n+1}, q_{n+1}$ 
and $x'_n + r_n - 1 \in U^0(r_n \A r_n)$.  (Recall that since $\A$
is TAI, $\A$ is $K_1$-injective; hence, 
$x'_n \in U^0(\A)$ implies that $x'_n + r_n - 1 \in U^0(r_n \A r_n)$.)
We then get     
$x''_n, y_n^j, z_n^j \in U^0(\A)$ ($j = 7,8$)   
such that the following hold:
$$x_n = \left( \prod_{j=1}^8 (y_n^j, z_n^j) \right) x''_n$$
$$x''_n - 1 \in q_{n+1} \A q_{n+1}$$
$$\| x''_n - 1 \| < \frac{1}{6400(n+1)^2}$$
$$T(Log(x''_n)) = 0$$
$$y_n^j - 1, z_n^j - 1 \in r_n \A r_n$$
$$\|y_n^j - 1 \|, \| z_n^j - 1 \| < 2/n$$ 
for $j = 7,8$.

Now apply 
Lemma \ref{lem:HSLemma5.17} to $q_{n+1}, r_{n+1}$ and
$x''_n + q_{n+1} + r_{n+1} - 1 \in (q_{n+1} + r_{n+1})\A (q_{n+1} + r_{n+1})$
to get $x_{n+1}, y_n^9, z_n^9 \in U^0(\A)$  
such that the following hold:
$$x_n = \left( \prod_{j=1}^9 (y_n^j, z_n^j) \right) x_{n+1}$$
$$x_{n+1} - 1 \in r_{n+1} \A r_{n+1}$$
$$\| x_{n+1} - 1 \| < \frac{1}{6400(n+1)^2}$$
$$y_n^9 - 1, z_n^9 - 1 \in (q_{n+1} + r_{n+1}) \A (q_{n+1} + r_{n+1})$$
$$\| y_n^9 - 1 \|, \| z_n^9 - 1 \| < 2/n$$

This completes the inductive construction of the sequences in 
(\ref{equ:UnitarySequences}).

Observe that since $x''_n - 1 \in q_{n+1} \A q_{n+1}$ and 
$x_{n+1} - 1 \in r_{n+1} \A r_{n+1}$ (and $x''_{n} = (y_n^9, z_n^9) x_{n+1}$),
we must have that $(y_n^9, z_n^9) - 1 \in r_n \A r_n + r_{n+1} \A r_{n+1}$.

We now modify the sequences in (\ref{equ:UnitarySequences}).   

Let $y_0^9  =_{df} z_0^9 =_{df} 1$ and for $n \geq 1$, 
$$\widetilde{y}_n^j =_{df} (y_{n-1}^9, z_{n-1}^9 ) y_n^j 
(y_{n-1}^9, z_{n-1}^9 )^*$$
$$\widetilde{z}_n^j =_{df} (y_{n-1}^9, z_{n-1}^9 ) z_n^j 
(y_{n-1}^9, z_{n-1}^9 )^*$$ 
for $1 \leq j \leq 9$.

From the observation above, we have that 
$\widetilde{y}_n^j - 1, \widetilde{z}_n^j -1 \in r_n \A r_n$ for $1 \leq 
j \leq 8$ and 
$\widetilde{y}_n^9 - 1, \widetilde{z}_n^9 -1 \in (r_n + r_{n+1}) 
\A (r_n + r_{n+1})$.

As a consequence, for $1 \leq j \leq 8$, the unitaries $y_n^j, z_n^j, 
\widetilde{y}_n^j, \widetilde{z}_n^j$ commute with the unitaries
$y_m^k, z_m^k, \widetilde{y}_m^k, \widetilde{z}_m^k$ for $1 \leq k \leq 8$
and $m \neq n$, and also for $k = 9$ and $m \notin \{ n-1, n, n+1 \}$.  
One then can prove (by induction) the following two relations:

$$x_1 = \left[ \left( \prod_{k=1}^n \widetilde{y}_k^1, 
\prod_{k=1}^n \widetilde{z}_k^1 
\right) ... 
\left( \prod_{k=1}^n \widetilde{y}_k^8, \prod_{k=1}^n \widetilde{z}_k^8 
\right) \prod_{k=1}^n ( y_k^9, z_k^9) \right] 
x_{n+1}$$ 
and
$$\prod_{k=1}^{2n} ( y_k^9, z_k^9) = 
 \left( \prod_{k=1}^n \widetilde{y}_{2k-1}^9, \prod_{k=1}^n 
\widetilde{z}_{2k-1}^9 \right)   \left( \prod_{k=1}^{n} y_{2k}^9, 
\prod_{k=1}^{n} z_{2k}^9 \right).$$

For all $n \geq 1$,
let $\overline{y}_n^j =_{df} \prod_{k=1}^{2n} \widetilde{y}_k^j$ and 
$\overline{z}_n^j =_{df} \prod_{k=1}^{2n} \widetilde{z}_k^j$ 
($1 \leq j \leq 8$), 
$\overline{y}_n^9 =_{df} \prod_{k=1}^n \widetilde{y}_{2k-1}^9$,  
$\overline{z}_n^9 =_{df} \prod_{k=1}^n \widetilde{z}_{2k-1}^9$, 
$\overline{y}_n^{10} =_{df}  \prod_{k=1}^{n} y_{2k}^9$, and
$\overline{z}_n^{10} =_{df}  \prod_{k=1}^{n} z_{2k}^9$. 

Clearly, as $n \rightarrow \infty$, the sequences
$\{ \overline{y}_n^j \}, \{ \overline{z}_n^j \}$ 
converge in $\A$ to, say, $\overline{y}_{\infty}^j, 
\overline{z}_{\infty}^j$ respectively ($1 \leq j \leq 10$).
Also, $x_{n+1} \rightarrow 1$ as $n \rightarrow \infty$.
Hence, we have 
that $x_1 = \prod_{j=1}^{10} (\overline{y}_{\infty}^j, 
\overline{z}_{\infty}^j)$.
Combining this with the above, we have that 
$$u = \left( \prod_{k=1}^{24} C_k \right) \left( \prod_{j=1}^{10}
(\overline{y}_{\infty}^j, 
\overline{z}_{\infty}^j) \right).$$
I.e., $u$ is the product of $34$ commutators in $U^0(\A)$.  

The proof of Part (2) is the same as the proof of Part (1),
except that Lemma \ref{lem:20CommutatorsApprox} (1), Lemma 
\ref{lem:6CommutatorsApprox} (1),  Lemma \ref{lem:HSLemma5.17},  
and \cite{HarpeSkandalisII} Lemma 5.18 are replaced with
 Lemma \ref{lem:20CommutatorsApprox} (2), Lemma
\ref{lem:6CommutatorsApprox} (2), \cite{HarpeSkandalisII} Lemma 5.11
and \cite{HarpeSkandalisII} Lemma 5.12 respectively.
\end{proof}

We note that the above argument is an improvement on the 
(nonetheless important and interesting)
argument of \cite{ThomsenCommutators} in that there
are uniform upper bounds (namely $34$ and $46$ for the
two cases) for the number of commutators.
(The proof in \cite{ThomsenCommutators} itself does not give
any upper bound and, conceivably, the number of commutators (in the
argument) could
get arbitrarily large depending on the unitary or invertible chosen.)
The argument in \cite{HarpeSkandalisII} 
gives an upper bound ($4$) for invertibles, but no explicit
upper bound
for unitaries  -- though the proof should lead to one.

It is an open question whether the number of commutators can
be reduced.

   In the next section, we will show that for the invertible case,
the number (presently $34$) of multiplicative commutators
can be reduced to $8$.

\section{Reducing the number of commutators}  

\begin{lem}
Let $\A$ be a unital C*-algebra and $p, q \in \A$ projections
with $p + q = 1$.

Say that $x \in GL^0(\A)$ is such that 
$pxp$, $qxq$ are invertible and
$$\| qxp(pxp)^{-1} pxq \| < \frac{1}{\| (qxq)^{-1} \|}.$$

Then there exist
\[
s = \left[ \begin{array}{cc}
p & 0 \\ qsp & q 
\end{array} \right], \makebox{     }  
t = \left[ \begin{array}{cc}
p & ptq \\ 0 & q 
\end{array} \right], 
\makebox{     }
d = \left[ \begin{array}{cc} 
pdp & 0 \\ 0 & qdq 
\end{array}
\right]
\]
in $GL^0(\A)$ such that 
$x = std$.

Moreover, we have the following:
\begin{enumerate}
\item[(a)] 
If $x$ is a positive invertible, then $pdp$ and $qdq$ are positive invertibles.
\item[(b)]   
$dist(pdp, U^0(p \A p)) = dist(pxp, U^0(p \A p))$ and  
$dist( qd q, U^0(q \A q)) \leq dist(qxq, U^0(q \A q))
+ \| qxp (pxp)^{-1} p x q \|$.  
\end{enumerate}
\label{lem:LUDI}
\end{lem}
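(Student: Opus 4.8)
The plan is to regard $x$ as a $2\times 2$ matrix over the corners of $\A$ cut down by $p$ and $q$ and to perform the standard block triangular factorization; the norm hypothesis is exactly what is needed to invert the relevant Schur complement.

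First I would set $a =_{df} pxp$, $b =_{df} pxq$, $c =_{df} qxp$, $e =_{df} qxq$, so that $x \leftrightarrow \left[\begin{smallmatrix} a & b \\ c & e\end{smallmatrix}\right]$, with $a$ invertible in $p\A p$ and $e$ invertible in $q\A q$ by hypothesis. The one nontrivial preliminary point is that the Schur complement $e - ca^{-1}b$ is invertible in $q\A q$: writing $e - ca^{-1}b = e(q - e^{-1}ca^{-1}b)$ and using $\| e^{-1}ca^{-1}b\| \le \|e^{-1}\|\,\|ca^{-1}b\| < 1$ (this is the hypothesis $\|qxp(pxp)^{-1}pxq\| < 1/\|(qxq)^{-1}\|$), the second factor, hence the product, is invertible. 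Then I would put $pdp =_{df} a$, $qdq =_{df} e - ca^{-1}b$, $qsp =_{df} ca^{-1}$, $ptq =_{df} b(qdq)^{-1}$ (the remaining blocks of $s$, $t$, $d$ being $0$); one checks $ca^{-1}\in q\A p$ and $b(qdq)^{-1}\in p\A q$, so $s$, $t$, $d$ have exactly the displayed shapes, and a direct $2\times 2$ multiplication gives $std = x$.

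Next I would observe that $s,t\in GL^0(\A)$: since $pq = qp = 0$ the element $qsp$ squares to zero, so $s = p + q + qsp = \exp(qsp)\in GL^0(\A)$, and likewise $t = \exp(ptq)\in GL^0(\A)$; consequently $d = t^{-1}s^{-1}x\in GL^0(\A)$, because $GL^0(\A)$ is a subgroup of $GL(\A)$ and $x\in GL^0(\A)$ by assumption. For (a), when $x\ge 0$ is invertible, $a = pxp$ and $e = qxq$ are positive invertibles and $c = b^*$, so with $y =_{df} \left[\begin{smallmatrix} p & -a^{-1}b \\ 0 & q\end{smallmatrix}\right]\in GL(\A)$ one computes $y^* x y = \mathrm{diag}(a,\, e - b^*a^{-1}b) = \mathrm{diag}(pdp, qdq)$; since $x\ge 0$ this is $\ge 0$, hence $pdp$ and $qdq$ are positive, and being invertible they are positive invertibles. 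Finally (b) is immediate: $pdp = pxp$ gives the first equality verbatim, and for every $u\in U^0(q\A q)$ one has $\|qdq - u\| = \|qxq - u - qxp(pxp)^{-1}pxq\| \le \|qxq - u\| + \|qxp(pxp)^{-1}pxq\|$, so passing to the infimum over $u$ yields the asserted inequality.

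The whole argument is essentially bookkeeping of block entries; the only genuine content is the invertibility of the Schur complement, forced by the norm bound, and — for part (a) — the familiar fact that a positive operator matrix has positive Schur complement. I do not expect any real obstacle beyond keeping the block multiplications straight.
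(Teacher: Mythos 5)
Your proof is correct and is essentially the same argument the paper appeals to: the paper simply cites \cite{HarpeSkandalisII} Lemma 5.8, which is precisely this $2\times 2$ block LDU factorization with the Schur complement $qxq - qxp(pxp)^{-1}pxq$ made invertible by the norm hypothesis. The verification that $s,t$ lie in $GL^0(\A)$ via the nilpotent exponential, the congruence $y^*xy = \mathrm{diag}(pdp,qdq)$ for part (a), and the triangle inequality for part (b) all match the intended reasoning.
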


\begin{proof}
The proof is exactly the same as that of
\cite{HarpeSkandalisII} Lemma 5.8. 
\end{proof}

\begin{lem} Let $\A$ be a unital simple separable 
C*-algebra such that either
\begin{enumerate}
\item $\A$ is a TAI-algebra, or 
\item $\A$ has real rank zero, strict comparison and cancellation of
projections. 
\end{enumerate}

Let $x \in \A$ be either 
a positive invertible or $dist(x, U^0(\A)) < 1/10$.

Then for
every nonzero projection  $r \in \A$ with $r \neq 1$,
there exists a projection $p \in \A$ with
$p \sim r$ such that
$pxp$ and $(1-p)x(1-p)$ are invertible and 
$$\| (1-p)xp (pxp)^{-1} px(1-p) \| <  
\frac{1}{\| ((1-p)x(1-p))^{-1} \|}.$$

Moreover, in the case where $dist(x, U^0(\A))< 1/10$,   
for every $\epsilon > 0$, 
we can choose $p$ so that 
$$\| (1-p)xp (pxp)^{-1} px(1-p) \| \leq \frac{(dist(x, U^0(\A)) 
+ \epsilon)^2}{  
\sqrt{ 1 - (21/10)(dist(x, U^0(\A)) + \epsilon)}}$$
and  
$dist(pxp, U^0(p \A p)), dist((1-p)x(1-p), U^0((1-p)\A (1-p))) 
\leq dist(x, U^0(\A)) + \epsilon$. 
(Note that the last quantity is bounded above by $1/10$, when $\epsilon$
is small enough.) 
\label{lem:LUDII} 
\end{lem}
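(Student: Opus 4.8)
The plan is to follow \cite{HarpeSkandalisII} Lemma 5.9, handling the interval-algebra summands that are new to the TAI setting by means of \cite{ThomsenCircle} Lemma 1.9 together with a $K_0$-bookkeeping argument; as agreed I would treat only the infinite-dimensional case. First I would reduce to the following statement: for every $\gamma > 0$ there is a projection $p \sim r$ with $\| px - xp \| < \gamma$ and, when $dist(x, U^0(\A)) < 1/10$, a unitary $u \in U^0(\A)$ with $\| x - u \| < dist(x, U^0(\A)) + \epsilon/2$, $pu = up$, $pup \in U^0(p \A p)$ and $(1-p)u(1-p) \in U^0((1-p)\A(1-p))$. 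Granting this, in the positive-invertible case $pxp \geq \| x^{-1} \|^{-1}p$ and $(1-p)x(1-p) \geq \| x^{-1} \|^{-1}(1-p)$, so both corners are invertible with inverses of norm $\leq \| x^{-1} \|$, while $\| (1-p)xp \| = \| (1-p)(xp-px)p \| \leq \gamma$; hence for $\gamma$ small $\| (1-p)xp(pxp)^{-1}px(1-p) \| \leq \gamma^2 \| x^{-1} \| < 1/\| ((1-p)x(1-p))^{-1} \|$. When $dist(x, U^0(\A)) < 1/10$, with $\delta'' := dist(x, U^0(\A)) + \epsilon/2$, the relation $pu = up$ forces $(1-p)up = 0 = p u(1-p)$, so the off-diagonal corners of $x$ have norm $\leq \delta''$ and the diagonal corners are within $\delta''$ of the unitaries $pup$, $(1-p)u(1-p)$; thus $pxp$ and $(1-p)x(1-p)$ are invertible with inverses of norm $\leq 1/(1-\delta'')$, $dist(pxp, U^0(p \A p)), dist((1-p)x(1-p), U^0((1-p)\A(1-p))) \leq \delta''$, and $\| (1-p)xp(pxp)^{-1}px(1-p) \| \leq \delta''^2/(1-\delta'')$, which is bounded by $(dist(x,U^0(\A)) + \epsilon)^2/\sqrt{1 - (21/10)(dist(x,U^0(\A)) + \epsilon)}$ since $-t/10 \leq t^2$.

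For the real-rank-zero case I would use real rank zero to approximate $x$ within $\min\{\gamma, \| x^{-1} \|^{-1}\}/2$ by a positive element $x_0 = \sum_{i=1}^N \lambda_i e_i$ with finite spectrum (the $e_i$ pairwise orthogonal projections summing to $1$); in the unitary case I would pick $u \in U^0(\A)$ with $\| x - u \| < dist(x, U^0(\A)) + \epsilon/4$ and approximate $u$ within $\epsilon/4$ by a finite-spectrum unitary $u_0 = \sum_i \alpha_i e_i \in U^0(\A)$. Using the Riesz decomposition property of $K_0(\A)$ I would write $[r] = \sum_i g_i$ with $0 \leq g_i \leq [e_i]$, and using real rank zero, strict comparison and cancellation realise each $g_i$ as $[p_i]$ for a projection $p_i \leq e_i$; then $p := \sum_i p_i$ has $[p] = [r]$ (so $p \sim r$), commutes with $x_0$ and $u_0$ (so $\| px - xp \| \leq 2\| x - x_0 \| < \gamma$), and $pu_0p = \sum_i \alpha_i p_i$, $(1-p)u_0(1-p) = \sum_i \alpha_i(e_i - p_i)$ are exponentials of self-adjoint elements of the corners, hence lie in $U^0(p \A p)$, $U^0((1-p)\A(1-p))$; taking $u_0$ as the unitary of the reduction finishes this case.

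For the TAI case $\A$ need not have real rank zero, so in place of a finite-spectrum approximant I would pass to an $\INT$-subalgebra. Applying the definition of TAI together with Remark \ref{rem:TAIMatrixSize} to $\{x\}$ (or to $\{x, u\}$ with $u$ a nearby $U^0$-unitary, in the second case) and a small tolerance, I get a projection $q$ and $\I \in \INT$ with $1_\I = q$ such that $q$ almost commutes with $x$ (and $u$), the compressions $qxq$ (and $quq$) lie near elements of $\I$, $1-q$ is Murray--von Neumann equivalent to a subprojection of $r$ of arbitrarily small trace (so that, by strict comparison, $0 \leq [1-q] \leq [r]$ and $[r] + [1-q] \leq [1]$ in $K_0(\A)$), and every summand of $\I$ has prescribed large matrix size. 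Writing $\I = \bigoplus_i \M_{n_i}(C[0,1]) \oplus \bigoplus_j \M_{m_j}$, I would replace $qxq$ by a nearby positive invertible $x_0 \in \I$ (resp. $quq$ by a nearby unitary $u_0 \in \I$) and apply \cite{ThomsenCircle} Lemma 1.9 in each summand: up to a small unitary perturbation inside $\I$, that element becomes $\sum_s \mu_s p_s$ over pairwise orthogonal minimal (continuous rank-one) projections $p_s$ summing to $q$, so partial sums of the $p_s$ give, for each admissible choice of ranks $0 \leq \ell_i \leq n_i$, $0 \leq t_j \leq m_j$, a projection $e \leq q$ commuting with it and with $[e] = \sum_i (\ell_i/n_i)[1_{\M_{n_i}(C[0,1])}] + \sum_j (t_j/m_j)[1_{\M_{m_j}}]$. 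Since $\A$ has the ordered $K_0$-group of a simple real-rank-zero algebra (in particular $K_0(\A)$ is weakly unperforated with Riesz decomposition), the matrix sizes may be enlarged at will, and $1-q$ is full, the achievable classes $[e]$ meet every order-interval of length $[1-q]$, so I can choose the ranks with $[r] - [e] \in [0, [1-q]]$; applying the same construction to the (again simple TAI) corner $(1-q)\A(1-q)$ and the element $(1-q)x(1-q)$ yields a projection $e' \leq 1-q$, almost commuting with $(1-q)x(1-q)$, with $[e'] = [r] - [e]$. Then $p := e + e'$ has $[p] = [r]$, so $p \sim r$ by cancellation; it commutes with $q$ and with the $\INT$-model and almost commutes with $(1-q)x(1-q)$, so $\| px - xp \| < \gamma$; and in the unitary case, since every unitary of an $\INT$-algebra is in the connected component of the identity (the $K_1$ of an interval or matrix algebra vanishes) and every corner of an $\INT$-algebra by a projection is again in $\INT$, the corner unitaries $pup$, $(1-p)u(1-p)$ --- built from the $\INT$-models and the recursive step, then perturbed back to $u$ --- lie in $U^0(p \A p)$, $U^0((1-p)\A(1-p))$, giving the reduction's claim.

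The hard part is the TAI bookkeeping just sketched: arranging that $p$, assembled from eigen-projections of the $\INT$-model of $x$ inside $\I$ plus a correcting projection in the small corner $1 - 1_\I$, lands in exactly the Murray--von Neumann class of the prescribed $r$ while staying (almost) commuting with $x$ and keeping the corner-unitaries in the connected component. This rests on the $K_0$-arithmetic above (simplicity and fullness of $1-q$, strict comparison, cancellation, Riesz decomposition, and the freedom to enlarge the matrix sizes of $\I$), and crucially on \cite{ThomsenCircle} Lemma 1.9 to furnish, in the interval-algebra summands where genuine spectral projections need not exist, commuting projections of every admissible rank. I expect this, rather than the (essentially formal) reduction or the real-rank-zero case, to be the main technical content.
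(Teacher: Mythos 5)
Your reduction step, the positive-invertible estimates, and the real-rank-zero construction are essentially sound (and the inequality $1/(1-\delta'') \le 1/\sqrt{1-(21/10)\delta''}$, which you use to match the paper's displayed bound, is correct). But your TAI construction of $p$ differs from the paper's, and it has a gap precisely at the step you flag as the ``main technical content.''

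The paper's argument also cuts down to a hereditary corner $Her(1-e)$ (with $e$ a small spectral-type projection for $a$ near $0\in sp(a)$, chosen with $e\prec 1-r$), passes to an $\INT$-subalgebra $\B$, and diagonalizes the corner unitary via \cite{ThomsenCircle} Lemma 1.9, exactly as you propose. The crucial difference is that the paper runs the TAI approximation with a finite set containing \emph{both} the compressed unitary \emph{and} a copy $r'\sim r$ of the target projection inside $Her(1-e)$. The $\INT$-subalgebra then simultaneously furnishes a unitary model $u_1\oplus u_2$ and a projection model $r''\oplus r'''$ of $r'$; since $r'''\in\B$ is (up to a unitary of $\B$) a sum of $M$ of the same minimal projections $p_j$ that diagonalize $u_2$, the choice $p:=p''+\sum_{j=1}^M p_j$ with $p''\le e$, $p''\sim r''$, gives $[p]=[r'']+[r''']=[r']=[r]$ by cancellation, with no $K_0$-arithmetic at all.

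Your version only approximates $x$ (resp.\ $u$) by an element of $\I$, and then tries to recover $[r]$ from the grid of classes of partial sums of minimal projections of $\I$, correcting the deficit $[r]-[e]$ inside $(1-q)\A(1-q)$. The assertion that ``the achievable classes $[e]$ meet every order-interval of length $[1-q]$'' is where this fails: the grid step-sizes are $\epsilon_i=[1_{\M_{n_i}(C[0,1])}]/n_i$, and for the grid to be $[1-q]$-dense you would need $\epsilon_i\le[1-q]$ in $K_0(\A)_+$, i.e.\ (by strict comparison) $\tau(\epsilon_i)<\tau(1-q)$ uniformly in $\tau$. But the TAI axioms plus Remark \ref{rem:TAIMatrixSize} let you impose a lower bound $L$ on the matrix sizes and an upper bound on $\tau(1-q)$ \emph{independently}; they give no lower bound on $\tau(1-q)$, and the algebra may hand back a $q$ with $\tau(1-q)$ far smaller than $1/L$, in which case the grid is too coarse relative to $[1-q]$. (Even if $\tau(\epsilon_i)$ beats $\tau(1-q)$ one still has to check the required interval meets the grid, since the $\epsilon_i$ need not generate $K_0(\A)$.) Finally, ``applying the same construction to $(1-q)\A(1-q)$'' is not a terminating procedure: that application again leaves a residual small corner, and you do not explain how the recursion closes.

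In short: the missing idea is to feed a copy of $r$ (inside $Her(1-e)$) into the TAI approximation alongside the unitary, so the $\INT$-model of $r$ directly tells you which minimal projections to collect. Once you do that, the $K_0$-bookkeeping you are worried about evaporates.
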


\begin{proof}

  Let us first assume that $\A$ is unital simple infinite-dimensional 
and TAI.
We will prove the case where $dist(x, U^0(\A)) < 1/10$. 

Let $u \in U^0(\A)$ be such that $\| x - u \| < 1/10$.
We may assume that $\epsilon < 1/10 - \| x - u \|$.  

 Firstly, multiplying $u$ (and also $x$)
by a scalar in $\mathbb{T}$ if necessary, we 
may assume that $1 \in sp(u)$.  (Note that all relevant statements and
inequalities are preserved under such a multiplication.)

Choose $\delta > 0$ such that 
if $c,d \in \A_{sa}$ with $\| c - d \| < \delta$
then $\| e^{i 2 \pi c} - e^{i 2 \pi d} \| < \epsilon/100$.
We may assume that $\delta < \epsilon/100$ and that if 
$\alpha \in \mathbb{R}$ and $|\alpha| < \delta$ then 
$|e^{i 2 \pi \alpha} - 1 | <  \epsilon/100 < 1/100$.

   By \cite{LinTAIUnitaries} Theorem 3.3, 
let $a \in \A$ be a self-adjoint element
such that $\| u -  e^{i 2 \pi a} \| < \epsilon/100$.
Since $1 \in sp(u)$, we may assume that $0 \in sp(a)$.

   Let $f \in (-\infty, \infty) \rightarrow [0,1]$ be a continuous
function such that 
$$f(\alpha)  \begin{cases}
>0  &  \makebox{  } \alpha \in (-\delta/10, \delta/10) \\
= 0 & \makebox{   } \alpha \notin (-\delta/10, \delta/10)
\end{cases}
$$
Since $0 \in sp(a)$, $f(a) \neq 0$.  Hence, since $\A$ has (SP)
(see \cite{LinTAI} Theorem 3.2), let $e \in \A$ be a nonzero
projection such that $e \in Her(f(a))$.  Moreover, since  $\A$ is 
simple TAI and $r \neq 1$, we may choose $e$ so that
$e \prec 1 - r$. (Note that $\A$ is weakly divisible (see the argument
for the existence of $\{ p_n, q_n, r_n \}$ in the proof of Theorem
\ref{thm:TAIFirstTh}) and has strict comparison (see 
\cite{LinTAI} Theorem 4.7).)    
Hence, let $r' \in Her(1 - e)$ be a projection such that
$r' \sim r$.

Also, 
$\| (1 - e) a (1 - e) - a \| =
\| -ea - ae + eae \| \leq
\| e \chi_{(-\delta/10, \delta/10)}(a) a \| 
+ \| a \chi_{(-\delta/10, \delta/10)}(a) e \|
+ \| e \chi_{(-\delta/10, \delta/10)}(a) a e \|
< 3 \delta/10$. 
Hence, 
$$\| u - e^{i 2\pi (1-e)a(1-e)} \|
\leq \| u - e^{i 2 \pi a} \| + \| e^{i 2 \pi a} - e^{i 2 \pi (1-e) a(1-e)} \|
< \epsilon/100 + \epsilon/100 = \epsilon/50.$$
 
Since $\A$ is TAI, $Her(1-r)$ is TAI, and 
there exist a projection $p' \in Her(1-e)$ and
a C*-subalgebra $\B \subset Her(1-e)$ with $\B \in \INT$ such that 
$p' \preceq e$, $1_{\B} = 1- e - p'$, unitaries $u_1 \in U^0(p' \A p')$,
$u_2 \in U^0(\B)$ and projectons $r'' \in Her(p')$, $r''' \in \B$
such that 
$$\| (1 - e) e^{i 2 \pi (1-e) a (1-e)} (1 - e) - (u_1 \oplus u_2) \| <
\epsilon/100$$
and
$$\| r' - (r'' \oplus r''') \| < \epsilon/100.$$

Note that 
$\| x - (e \oplus u_1 \oplus u_2) \| 
\leq \| x - u \| + \| u -  e^{i 2 \pi (1 - e) a (1 - e)} \| 
+ \| e^{i 2 \pi (1- e) a (1 - e)} - (e \oplus u_1 \oplus u_2) \|
< \| x - u \| + \epsilon/50 + \epsilon/100 = \| x - u \| +  3 \epsilon/100
< 1/10$. 
We denote this computation by ``(*)". 

For simplicity, let us assume that $\B \cong \M_N(C[0,1])$. 
  
Since $r'' \leq p' \preceq e$, let $p'' \leq e$ be a projection such that
$p'' \sim r''$. 
Also, by \cite{ThomsenCircle} Lemma 1.9,    
there exist pairwise orthogonal minimal projections
$p_1, p_2, ..., p_N \in \M_N(C[0,1])$
and continuous 
functions $g_1, g_2, ..., g_N : [0,1] \rightarrow \mathbb{T}$ such that 
$\| u_2 - \sum_{j=1}^N g_j p_j \| < \epsilon/100$.

Suppose that $r'''$ is the sum of $M$ minimal projections in $\B$ (where
$M \leq N$).  Then take $p =_{df} p'' + \sum_{j = 1}^M p_j$.
Clearly, $p \sim r$.

By (*), 
$\| p x p - (p'' + \sum_{j=1}^M g_j p_j) \|
\leq \| pxp -  p (e \oplus u_1 \oplus u_2)p \|
+ \| p (e \oplus u_1 \oplus u_2)p - (p'' + \sum_{j=1}^M g_j p_j) \|
< \| x - u \| + 3 \epsilon/100 + 0 <  1/10$. 
We note in particular that since $p'' + \sum_{j=1}^M g_j p_j$ is  
in $U^0(p \A p)$,  $pxp$ is invertible and 
$dist(pxp, U^0(p \A p)) < 1/10$.

By (*), 
$\| (1 - p) x (1 - p) - 
((e - p'') \oplus u_1 \oplus \sum_{j=M+1}^N g_j p_j) \|
\leq \| (1-p)x(1-p)  - (1-p) (e \oplus u_1 \oplus u_2) (1-p)\| +
\| (1-p)  (e \oplus u_1 \oplus u_2) (1 - p) - ((e - p'') \oplus u_1 \oplus
\sum_{j=M+1}^N g_j p_j)\| < \| x - u \| +  3 \epsilon/100 + 0
< 1/10$.
Note in particular that since $(e - p'') \oplus u_1 
\oplus \sum_{j=M+1}^N g_j p_j$ is in $U^0((1-p)\A(1-p))$, 
$(1 -p) x (1-p)$ is invertible and $dist((1-p) x (1-p), U^0((1-p)\A (1-p))) 
< 1/10$.

Note that since $u$, $\epsilon$ are arbitrary, the computations
in the previous two paragraphs actually show that for every $\epsilon > 0$,
we can choose $p$ so that  
$dist(pxp, U^0(p \A p)), dist((1-p) x (1-p), U^0((1-p)\A (1-p))
\leq dist(x, U^0(\A)) + \epsilon$.

To simplify notation, let $u_3 =_{df} p'' + \sum_{j=1}^M g_j p_j$.
$\| (pxp)^*pxp - 1 \| = 
\| (pxp)^* pxp - (pxp)^* u_3 \| + \| (pxp)^* u_3 - u_3^* u_3 \| 
\leq \| (pxp)^* \| \| pxp - u_3 \| + \| (pxp)^* - u_3^* \|
< (1 + 1/10) (1/10) + 1/10 = 21/100$.
Hence,  $sp((pxp)^* pxp) \subset (1 - 21/100, 1 + 21/100)$.
Hence, 
$\| ((pxp)^* pxp)^{-1} \|  < 1/(1 - 21/100) = 100/79$.
Hence, $\| (pxp)^{-1} \| \leq 10/\sqrt{79}$.  
By a similar argument,
$\| ((1-p)x(1-p))^{-1} \| \leq 10/\sqrt{79}$.

Note that (*) and 
the computation in the previous paragraph actually shows
that $\| (pxp)^* pxp - 1 \| \leq (21/10)\|pxp -p u_3p \|
\leq (21/10)(\| x - u \| + 3 \epsilon/100)$.
Since $u$ was arbitrary, for every $\epsilon >0$, we can choose $p$ so that
$sp((pxp)^* pxp) \subset (1 - (21/10) (dist(x, U^0(\A)) + \epsilon), 
1 + (21/10) 
(dist(x, U^0(\A)) + \epsilon))$.
Hence, for every $\epsilon > 0$, we can choose $p$ so that
$\| (pxp)^{-1} \| \leq 1/\sqrt{1 - (21/10)(dist(x, U^0(\A)) + \epsilon)}$.  

Next,  $\| p x (1-p) \| \leq 
\| px(1-p) - pu(1-p) \| + 
\| p u (1 -p) - p (e \oplus u_1 \oplus u_2) (1-p) \|
+ \| p (e \oplus u_1 \oplus u_2) (1 - p) \| 
< 1/10  + 3 \epsilon/100 + 0 < 1/10 + 3/1000 = 103/1000$.
Similarly, $\| (1-p) x p \| < 103/1000$.
Hence, 
$\| (1-p) x p \| \| p x (1-p) \| <  
 10609/1000000  
< 79/100 \leq 
\frac{1}{\| (pxp)^{-1} \| \| ((1-p)x(1-p))^{-1} \|}$. 
Hence,  
$$\| (1-p)xp (pxp)^{-1} p x (1 -p) \| 
<  \frac{1}{ \|((1-p)x(1-p))^{-1}\| }.$$

Since $u$, $\epsilon$ are arbitrary, the computation of the
previous paragraph also yields that for every $\epsilon >0$, we can
choose $p$ so that 
$\| p x (1-p) \|, \| (1-p) x p \| \leq dist(x, U^0(\A)) + \epsilon$.
Hence, for every $\epsilon >0$, we can choose $p$ so that  
$$\| (1-p)x p (pxp)^{-1} p x (1-p) \| \leq  
(dist(x, U^0(\A)) + \epsilon)^2 / \sqrt{1 - 
(21/10)(dist(x, U^0(\A)) + \epsilon)}.$$ 
 
  The proof for the case where $x$ is a positive invertible is similar
(and easier).

  For the case where $\A$ has real rank zero, strict comparison
and cancellation, 
one uses that if $y$ is positive invertible or
unitary in $U^0(\A)$
then $y$ can be approximated (arbitrarily close in norm) by 
positive invertibles with finite spectrum
or unitaries with finite spectrum, respectively.
(See, for example, \cite{LinRR0Unitaries}.)   
One also uses that $\A$ has strict comparison and the Riesz property. 
\end{proof}

\begin{lem}
Let $\A$ be a unital separable simple  C*-algebra
such that either
\begin{enumerate}
\item $\A$ is TAI, or
\item $\A$ has real rank zero, strict comparison and cancellation of
projections.
\end{enumerate} 
Let $x \in \A$ be a unitary in $U^0(\A)$ or a positive invertible.

Then there exist pairwise orthogonal
projections $p_1, p_2, ..., p_{93} \in \A$ with $\sum_{j=1}^{93} p_j = 1_{\A}$
and $p_j \sim p_k$ for $1 \leq j, k \leq 47$ or $48 \leq j, k \leq 93$,
and elements $s, t, d \in GL(\A)$ such that the following hold:
\begin{enumerate}
\item $s$ is lower triangular: $s = 1 + \sum_{j > k} p_j s p_k$. 
\item $t$ is upper triangular: $t = 1 + \sum_{j < k} p_j t p_k$.   
\item $d$ is diagonal: $d = \sum_j p_j d p_j$. 
\item $x = s t d$. 
\end{enumerate}

Moreover, (if $x \in U^0(\A)$) we can choose the
projections $p_j$ so that for 
$1 \leq j \leq 93$, 
$p_j d p_j \in GL^0(p_j \A p_j)$ (in $U^0(p_j \A p_j)$ respectively). 

\label{lem:LUDIII}
\end{lem}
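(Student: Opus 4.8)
The plan is to produce $x=std$ by an iterated block ``LU--D'' factorization, peeling off one diagonal block at a time: at each stage Lemma~\ref{lem:LUDII} is used to choose a projection that makes the corner satisfy the hypothesis of Lemma~\ref{lem:LUDI}, and Lemma~\ref{lem:LUDI} is used to split off that block. First I would fix reference projections realizing the equivalence pattern. In both cases $\A$ is weakly divisible --- in case (2) by \cite{PereraRordam} Proposition~5, in case (1) by the argument already recorded in the proof of Theorem~\ref{thm:TAIFirstTh} --- so there is a unital embedding of $\M_{47}\oplus\M_{46}$ into $\A$. Taking $e_1,\dots,e_{93}$ to be the diagonal matrix units ($e_1,\dots,e_{47}$ from the $\M_{47}$ summand, $e_{48},\dots,e_{93}$ from the $\M_{46}$ summand) gives $\sum_{j=1}^{93}e_j=1_\A$ with $e_1\sim\cdots\sim e_{47}$ and $e_{48}\sim\cdots\sim e_{93}$; these pin down the pattern required of the $p_j$.

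Next I would run the peeling. Note $x$ already satisfies the hypothesis of Lemma~\ref{lem:LUDII}: it is a positive invertible, or it lies in $U^0(\A)$ and then $dist(x,U^0(\A))=0<1/10$. Set $q_0=1_\A$, $y_0=x$, $\A_0=\A$. For $k=1,\dots,92$ I would define a projection $p_k$, a corner $\A_k=q_k\A q_k$ with $q_k=q_{k-1}-p_k$, and an element $y_k\in\A_k$, as follows. The corner $\A_{k-1}$ is again unital simple separable of the same type as $\A$ (hereditary subalgebras inherit the TAI property, real rank zero, strict comparison and cancellation), and $y_{k-1}\in\A_{k-1}$ is a positive invertible (resp. satisfies $dist(y_{k-1},U^0(\A_{k-1}))<1/10$). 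Apply Lemma~\ref{lem:LUDII} inside $\A_{k-1}$ with $r$ a subprojection of $q_{k-1}$ equivalent to the reference projection appropriate to $k$ (type $A$ for $k\le 47$, type $B$ for $k\ge 48$): this produces $p_k\sim r$ with $p_ky_{k-1}p_k$ and $q_ky_{k-1}q_k$ invertible and the norm bound of Lemma~\ref{lem:LUDI} in force. Lemma~\ref{lem:LUDI} then factors $y_{k-1}=s_kt_kd_k$ with $s_k$ lower, $t_k$ upper, $d_k$ block-diagonal for $q_{k-1}=p_k+q_k$; put $y_k=q_kd_kq_k$, the Schur complement. By Lemma~\ref{lem:LUDI}(a), Schur complements of positive invertibles are positive invertible; by Lemma~\ref{lem:LUDI}(b) together with the moreover-part of Lemma~\ref{lem:LUDII}, $dist(y_k,U^0(\A_k))$ exceeds $dist(y_{k-1},U^0(\A_{k-1}))$ by at most $\epsilon_k$ plus a quantity that is small in $dist(y_{k-1},U^0)+\epsilon_k$, so choosing the $\epsilon_k$ small keeps the running distance below $1/10$ through all $92$ steps. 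Cancellation gives $q_k=q_{k-1}-p_k\sim q_{k-1}-r$, which carries the remaining reference projections, so the pattern is preserved; after step $92$ I set $p_{93}=q_{92}$ and obtain $p_1\sim\cdots\sim p_{47}$ and $p_{48}\sim\cdots\sim p_{93}$.

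Finally I would assemble. Extending each $s_k,t_k$ by the identity on $1_\A-q_{k-1}$, and using that the off-diagonal part of $s_k$ sits in $q_k\A p_k$ (strictly below the full diagonal) while a factor supported on $\{p_1,\dots,p_{k-1}\}$ together with a lower factor on $q_{k-1}$ can be slid past the partially built upper part without creating new entries, an induction yields $x=L\cdot V$ with $L$ lower unipotent (a product of lower unipotent factors) and $V$ upper triangular whose $j$-th diagonal block is $p_jd_jp_j$ for $j\le 92$ and $y_{92}$ for $j=93$ --- in particular every diagonal block of $V$ is invertible. Let $D$ be the diagonal part of $V$ and $T=VD^{-1}$ (upper unipotent); set $s=L$, $t=T$, $d=D$. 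Then $x=std$, $s$ and $t$ are triangular and, being unipotent of the form $1+N$ with $N$ nilpotent, lie in $GL^0(\A)$, and $d\in GL(\A)$. For the moreover clause: if $x$ is a positive invertible then every $p_jdp_j$ is a positive invertible, hence in $GL^0(p_j\A p_j)$; if $x\in U^0(\A)$ then $p_jdp_j=p_jy_{j-1}p_j$ lies within $1/10<1$ of a unitary of $U^0(p_j\A p_j)$, hence in $GL^0(p_j\A p_j)$, and to land it in $U^0(p_j\A p_j)$ one arranges in addition, at each peeling step, that $p_j$ approximately commutes with a unitary approximant of $y_{j-1}$ (available from the $\INT$-building-block structure) and absorbs the error into the triangular factors.

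I expect the main obstacle to be the bookkeeping inside the peeling: keeping $dist(y_k,U^0(\A_k))$ under control so that Lemma~\ref{lem:LUDII} still applies after $92$ iterations, while simultaneously tracking Murray--von Neumann equivalences so that the $93$ resulting projections fall into a block of $47$ and a block of $46$ of mutually equivalent projections. The algebraic assembly of the $2\times2$ factorizations into genuinely triangular and diagonal elements is then routine, and the genuinely delicate point is the $U^0$ refinement of the last clause, which is where the interval structure of the TAI / real-rank-zero building blocks is really used.
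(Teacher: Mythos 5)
The proposal is correct and follows essentially the same route as the paper: the paper's proof is a terse pointer to \cite{HarpeSkandalisII} Lemma 6.4, with Lemma \ref{lem:LUDI} and Lemma \ref{lem:LUDII} replacing their Lemma 6.3, plus the remark that choosing $\epsilon = 1/10^{1000}$ at every stage controls the distance-to-$U^0(\cdot)$ bookkeeping for the unitary case. Your iterated peeling, the weak-divisibility choice of the $47/46$ pattern, the use of corners inheriting TAI/real-rank-zero, and the norm-estimate tracking are all precisely what that reference amounts to once unwound.
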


\begin{proof}
Since $\A$ is weakly divisible (see the second paragraph in the proof
of Theorem \ref{thm:TAIFirstTh}), 
there exists nonzero projections $p, q \in \A$ such that
$47[p] + 46[q] = [1_{\A}]$.
 
The rest of the  proof is similar to
\cite{HarpeSkandalisII} Lemma 6.4,
except that we use (this paper) Lemma \ref{lem:LUDI} and 
Lemma \ref{lem:LUDII}
instead of \cite{HarpeSkandalisII} Lemma 6.3.
For the case where $x$ is a unitary, in order to make the
induction  work, we additionally
need to use the norm estimates in Lemmas \ref{lem:LUDI} and \ref{lem:LUDII}, 
which require $\epsilon$ to be sufficiently
small (at each step of the induction). 
By inspection, taking
$\epsilon = 1/10^{1000}$ (for all the steps) will suffice.
\end{proof}

\begin{lem}  Let $\A$ be  a unital simple separable C*-algebra
such that either
\begin{enumerate}
\item $\A$ is a TAI-algebra, or
\item $\A$ has real rank zero, strict comparison and cancellation of
projections.
\end{enumerate}

Let $x \in \A$ be either a unitary in $U^0(\A)$
or a positive invertible element.

Then there exist pairwise orthogonal and pairwise (Murray-von Neumann)
equivalent projections $q_1, q_2, ..., q_{46} \in \A$ 
and elements $x_1, y_1, x_2, y_2,  z \in GL(\A)$
with
$$x = (x_1, y_1)(x_2, y_2) z$$
and
$$z - 1 \in q_1 \A q_1.$$
\label{lem:Mar1,2012.6:26PM}
\end{lem}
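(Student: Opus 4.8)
The plan is to deduce this from the $LU$-type factorization of Lemma \ref{lem:LUDIII} together with the triangular-matrix commutator identities of de la Harpe and Skandalis.

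First I would apply Lemma \ref{lem:LUDIII} to $x$ (legitimate, since $x$ is either a unitary in $U^0(\A)$ or a positive invertible). This produces pairwise orthogonal projections $p_1, p_2, \dots, p_{93} \in \A$ with $\sum_{j=1}^{93} p_j = 1_\A$, with $p_1 \sim p_2 \sim \dots \sim p_{47}$ and $p_{48} \sim p_{49} \sim \dots \sim p_{93}$, together with $s, t, d \in GL(\A)$ such that $s = 1 + \sum_{j>k} p_j s p_k$ is lower triangular, $t = 1 + \sum_{j<k} p_j t p_k$ is upper triangular, $d = \sum_j p_j d p_j$ is diagonal, $x = std$, and (in the unitary case) $p_j d p_j \in GL^0(p_j \A p_j)$ for all $j$. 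It then suffices to factor $std$ in the required form. For the target projections, set $q_i =_{df} p_i + p_{47+i}$ for $1 \le i \le 46$; since $p_i$ lies in the ``type $A$'' block $\{p_1,\dots,p_{47}\}$ and $p_{47+i}$ in the ``type $B$'' block $\{p_{48},\dots,p_{93}\}$, the $q_i$ are pairwise orthogonal and pairwise Murray--von Neumann equivalent (each equivalent to $p_1 + p_{48}$), and $q_1 = p_1 + p_{48}$.

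Next I would dispose of the three factors. On the block $p_1 + \dots + p_{47}$ the summands are mutually equivalent, so one may fix a cyclic shift $w_A$ of $\{p_1,\dots,p_{47}\}$ (a unitary of that corner with $w_A p_i w_A^* = p_{i+1}$ cyclically); likewise fix a cyclic shift $w_B$ of $\{p_{48},\dots,p_{93}\}$. The telescoping commutator computations of \cite{HarpeSkandalisII} (the arguments behind their triangular-matrix lemmas, cf. the proof of their Theorem 6.6, and also \cite{ThomsenCommutators} Lemma 2.1) then give: (i) the lower-triangular unipotent $s$ is a single commutator in $GL(\A)$; (ii) the upper-triangular unipotent $t$ is a single commutator in $GL(\A)$; (iii) the diagonal factor splits as $d = d' z$, where $z \in GL(\A)$ has the part of $z$ in $p_1\A p_1$ equal to the ``determinant'' of the type-$A$ block of $d$ (all summands $p_j d p_j$, $j\le 47$, transported into $p_1$ and multiplied), the part in $p_{48}\A p_{48}$ equal to the corresponding ``determinant'' of the type-$B$ block, and $d' $ a determinant-trivial diagonal element. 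Using commutativity of the two blocks, $z-1 \in p_1\A p_1 + p_{48}\A p_{48} \subseteq q_1\A q_1$. Because $47,46 \ge 3$, the determinant-trivial $d'$ is absorbed — via the same shifts $w_A, w_B$ and a Whitehead-type identity — into the commutators built in (i) and (ii) rather than producing new ones. Assembling, $x = std = (x_1,y_1)(x_2,y_2)\, z$ with $z-1 \in q_1\A q_1$; in the unitary case one checks along the way that $q_1 z q_1 \in GL^0(q_1\A q_1)$, using that in Lemma \ref{lem:LUDIII} the diagonal entries $p_j d p_j$ lie in $GL^0(p_j\A p_j)$.

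The delicate point — and the main obstacle — is exactly step (iii): the naive count from $s$, $t$ and $d$ handled separately is four commutators, and the content of the lemma is to bring it down to two. This requires interleaving the reduction of the diagonal with the triangular commutator constructions so that the unipotent corrections produced by trivializing the block-determinants of $d$ are genuinely soaked up by the two commutators already built for $s$ and $t$, and this is where the particular splitting $93 = 47 + 46$ of the projection system (the ``$n$ and $n-1$'' that supplies an extra slot for each cyclic shift) is used. Carrying it out faithfully amounts to rerunning, with the diagonal factor present, the commutator identities of \cite{HarpeSkandalisII}; since Lemma \ref{lem:LUDIII} already treats both the TAI case and the real-rank-zero case uniformly, no further case distinction is needed downstream.
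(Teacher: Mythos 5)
Your proposal is correct and follows exactly the same route as the paper: apply Lemma \ref{lem:LUDIII} to obtain the $93$-block $LU D$ factorization, then run the argument of de la Harpe--Skandalis Lemma~6.5 (with this decomposition replacing their Lemma~6.4) to merge the triangular factors and the determinant-trivial part of the diagonal into two commutators, leaving the residual ``block determinants'' in the corner $q_1 \A q_1$ with $q_1 = p_1 + p_{48}$. Your high-level reconstruction of the shift/Whitehead mechanism in step (iii), and your identification of it as the delicate point, accurately reflects what is being cited; the paper itself gives no more detail than the reference to [HS, Lemma~6.5], so your sketch is a faithful match.
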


\begin{proof}
The argument is exactly the same as \cite{HarpeSkandalisII} Lemma 6.5,
but where we use Lemma \ref{lem:LUDIII} instead of 
\cite{HarpeSkandalisII} Lemma 6.4.  
\end{proof}
 
\begin{thm}
 Let $\A$ be  a unital simple separable  
TAI-algebra.

Let $x \in GL^0(\A)$ be such that 
$\Delta_T(x) = 0$.

Then 
there exist $x_j, y_j \in GL^0(\A)$, $1 \leq j \leq 8$, 
such that 
$$x = \prod_{j=1}^{8} (x_j, y_j).$$

If, in addition, $x$ is a unitary (in $U^0(\A)$) or a positive 
invertible, then
there exist $x_j, y_j \in GL^0(\A)$ (not necessarily unitary or positive), 
$1 \leq j \leq 4$,
such that 
$$x = \prod_{j=1}^{4} (x_j, y_j).$$
\label{thm:MainTAITh}
\end{thm}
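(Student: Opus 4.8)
The plan is to deduce the first (eight-commutator) assertion from the second. Given $x\in GL^0(\A)$ with $\Delta_T(x)=0$, I would take the polar decomposition $x=u|x|$; since every unital simple TAI-algebra has stable rank one, the unitary part $u$ lies in $U^0(\A)$, while $|x|$ is a positive invertible and hence lies in $GL^0(\A)$. By Lemma \ref{lem:DeterminantPolarDecomposition} we get $\Delta_T(u)=\Delta_T(|x|)=0$. Granting the second assertion, $u$ is a product of $4$ commutators in $GL^0(\A)$ and $|x|$ is a product of $4$ commutators in $GL^0(\A)$, so $x=u|x|$ is a product of $8$; this settles the general case. Thus the whole theorem reduces to showing: if $x\in U^0(\A)$ or $x$ is a positive invertible and $\Delta_T(x)=0$, then $x$ is a product of $4$ multiplicative commutators in $GL^0(\A)$.

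To prove that, I would first apply Lemma \ref{lem:Mar1,2012.6:26PM}, which produces pairwise orthogonal and pairwise Murray--von Neumann equivalent projections $q_1,\dots,q_{46}\in\A$ together with $x_1,y_1,x_2,y_2,z\in GL(\A)$ such that $x=(x_1,y_1)(x_2,y_2)\,z$ and $z-1\in q_1\A q_1$. Since $\Delta_T$ is a group homomorphism vanishing on every multiplicative commutator, $\Delta_T(z)=\Delta_T(x)=0$. Two commutators have now been extracted, so the task becomes: a $z$ that is supported on the corner $q_1\A q_1$, has trivial de la Harpe--Skandalis determinant, and carries with it the $45$ further orthogonal copies $q_2,\dots,q_{46}$ of $q_1$, must be a product of $2$ multiplicative commutators in $GL^0(\A)$.

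For this last step I would follow the endgame of the proof of \cite{HarpeSkandalisII} Theorem 6.6, replacing each appeal to finite-dimensional structure by the approximate-interval technology developed above. In outline: the corner $q_1\A q_1$ is again a unital separable simple TAI-algebra, and $z$ restricts to $z'\in GL^0(q_1\A q_1)$; transporting a path realising $\Delta_T(z)=0$ into the corner and using that the universal trace of $\A$ restricts to a tracial continuous linear map on $q_1\A q_1$ (and so factors through $E_u(q_1\A q_1)$), one checks that $z'$ again has trivial determinant in the corner. Then, approximating $z'$ by a diagonalisable element over an $\INT$-subalgebra of $q_1\A q_1$ whose eigenvalue functions sum to $0$, and using the partial isometries implementing $q_1\sim q_j$ ($2\le j\le 46$) to spread the resulting pieces across the orthogonal copies, one rewrites $z$ — with the help of Corollary \ref{cor:MatrixCommutator} and Lemma \ref{lem:INTDiagonal} in the unitary case, or their invertible analogues together with Lemma \ref{lem:InvertibleDiagonal} in the positive case, plus a finite Eilenberg-swindle-type rearrangement — as a product of exactly $2$ commutators in $GL^0(\A)$; the vanishing of the determinant of $z'$ is precisely what makes the phase and $K_0$ bookkeeping close up.

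The hard part is this last step, and in particular the bound $2$. It is here that it matters that Lemma \ref{lem:Mar1,2012.6:26PM} supplies $46$ pairwise equivalent orthogonal projections — enough room to run the finite swindle and to absorb the $K_0$-discrepancy coming from the determinant — rather than just two or three; and it is here that the TAI (as opposed to AF) setting demands genuine care, since the exact finite-dimensional diagonalisation used in \cite{HarpeSkandalisII} must be replaced by approximate diagonalisation over $\INT$-subalgebras, so that the phases, the trace identity forced by $\Delta_T(z)=0$, and the record of which corner each piece is supported on all have to be controlled simultaneously. Verifying that the determinant of $z'$ genuinely vanishes in the corner $q_1\A q_1$ — and not merely modulo the image of $K_0$ of some ambient algebra — is a minor but necessary check along the way.
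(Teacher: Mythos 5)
Your outer architecture — polar decomposition to reduce the eight-commutator claim to the four-commutator unitary/positive case, then two commutators extracted via Lemma~\ref{lem:Mar1,2012.6:26PM} leaving a residual $z$ with $z-1\in q_1\A q_1$ — matches the paper exactly. The gap is in your treatment of $z$. You propose a one-shot move: approximate $z'$ by a diagonalisable element over an $\INT$-subalgebra of the corner, spread it across the $46$ orthogonal copies of $q_1$, and run a finite swindle. But the swindle in the endgame of \cite{HarpeSkandalisII} Theorem~6.6 requires as input an \emph{exact} decomposition of $z$ as a finite product of multiplicative commutators supported in $q_1\A q_1$; a single approximation leaves a residual that the swindle has nowhere to absorb, and no exact identity closes up.

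That is precisely the job of Theorem~\ref{thm:TAIFirstTh}, which you never invoke. Applied to the corner $q_1\A q_1$ (itself a unital simple separable TAI-algebra), it writes $z$ as an \emph{exact} product of $34$ (unitary) or $46$ (invertible) commutators, via the convergent infinite-product argument of Theorem~\ref{thm:TAIFirstTh} whose tail is forced to converge by the norm estimates in Lemma~\ref{lem:6CommutatorsApprox}. The paper's proof of Theorem~\ref{thm:MainTAITh} then feeds that exact decomposition, together with the $46$ pairwise equivalent orthogonal projections, into the de la Harpe--Skandalis swindle to get the remaining $2$ commutators. In your sketch, the reason Lemma~\ref{lem:Mar1,2012.6:26PM} is engineered to supply exactly $46$ equivalent projections — to match the commutator count produced by Theorem~\ref{thm:TAIFirstTh} — goes unexplained, and the essential passage from approximation to exact equality is missing; it cannot be supplied by ``phase and $K_0$ bookkeeping'' alone, since the residual error after one approximation is a genuine element of $\A$, not a class in $K$-theory.
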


\begin{proof}

In the case where $x$ is either a unitary (in the connected component of
the identity) or a positive invertible, the proof is exactly the
same as \cite{HarpeSkandalisII} Theorem 6.6, 
except that \cite{HarpeSkandalisII} Lemma 6.5 is replaced with 
Lemma \ref{lem:Mar1,2012.6:26PM}; and also,
\cite{HarpeSkandalisII} Proposition 6.1 is replaced with
Theorem \ref{thm:TAIFirstTh}.  

Now for the general case.  If $x \in GL^0(\A)$ is arbitrary,
let $x = u |x|$ be the polar decomposition of $x$.
Then by Lemma \ref{lem:DeterminantPolarDecomposition},
$\Delta_T(u) = \Delta_T(|x|) = 0$.
Then, by the cases for unitaries and positive invertibles,
$u$ and $|x|$ are both the product of $4$ multiplicative 
commutators.  Hence, $x$ is the product of $8$
multiplicative commutators, as required.
\end{proof}

\iffalse 
%We end this section by noting that we can obtain a very small number
%of commutators if we assume that $\A$ is a simple unital 
%TAF-algebra.

%\begin{lem}  Let $\A$ be a unital separable simple TAF-algebra
%and let $x \in GL(\A)$ be an invertible.

%Then for every $\epsilon > 0$ and every nonzero projection $r \in \A$,
%there exists a projection $p \in \A$ such that   
%$p \sim r$ and  $pxp$ is invertible

%Moreover, if $x \in GL^0(\A)$ then we can choose $p$
%so that $pxp \in GL^0(p \A p)$ and 
%$(1-p)x (1-p) \in GL^0((1-p)\A (1-p))$.

%\label{lem:TAFPrelim} 

%\end{lem}

%\begin{proof}
%  The proof uses the TAF property and the fact that 
%every matrix (i.e., every element of $\M_n(\mathbb{C})$) is
%unitarily equivalent to an upper triangular matrix.
%\end{proof}

%\begin{thm}
%Let $\A$ be a unital simple separable TAF-algebra.
%Let $x \in GL^0(\A)$ be an invertible element such that
%$\Delta_T(x) = 0$.

%Then $x$ is the product of $5$ multiplicative commutators 
%in $GL^0(\A)$.
%I.e., 
%there exist $x_j, y_j \in GL^0(\A)$, $1 \leq j \leq 5$, such that
%$$x = \prod_{j=1}^5 (x_j, y_j).$$
%\label{thm:MainTAFTh} 
%\end{thm} 

%\begin{proof}

%  The proof is exactly the same as the proof of 
%\cite{HarpeSkandalisII} Theorem 6.6, except that 
%we substitute Lemma \ref{lem:TAFPrelim} (for
%\cite{HarpeSkandalisII} Lemma 6.2) in the proof
%of \cite{HarpeSkandalisII} Lemma 6.3 (in the chain of
%arguments leading up to \cite{HarpeSkandalisII} Theorem 6.6).  
%\end{proof}

\fi

\section{The real rank zero case}

\begin{thm} Let $\A$ be a unital simple separable C*-algebra
with real rank zero, strict comparison, and cancellation of projections.
\begin{enumerate}
\item
Suppose that $u \in U^0(\A)$ is a unitary such that
$\Delta_T(u) = 0$.

Then 
there exist unitaries $x_j, y_j \in U^0(\A)$, $1 \leq j \leq 34$,
such that
$$u = \prod_{j=1}^{34} (x_j, y_j).$$
\item Suppose that $x \in GL^0(\A)$ is an invertible such that
$\Delta_T(x) = 0$.

Then 
there exist invertibles $x_j, y_j \in GL^0(\A)$, $1 \leq j \leq 46$,
such that
$$x = \prod_{j=1}^{46} (x_j, y_j).$$
\end{enumerate}
\label{thm:RR0FirstTh} 
\end{thm}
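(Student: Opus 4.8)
The plan is to follow the proof of Theorem \ref{thm:TAIFirstTh} essentially line by line, replacing each ingredient special to TAI algebras by its real rank zero counterpart. The outer skeleton is unchanged: a unital simple real rank zero C*-algebra is weakly divisible (\cite{PereraRordam} Proposition 5), and, since $\A$ has cancellation of projections, one gets the same system of projections $p_n, q_n, r_n \in \A$ ($n \geq 1$) with the four properties used there. One then (i) writes $u$ (resp.\ $x$) as a product of $20$ (resp.\ $26$) multiplicative commutators times $e^{i 2\pi a}$ (resp.\ $e^{d}$) with $a$ (resp.\ $d$) of small norm and vanishing on every tracial state; (ii) applies \cite{HarpeSkandalisII} Lemma 5.18 (resp.\ Lemmas 5.11 and 5.12) twice to push this small factor into $r_1 \A r_1$ at the cost of four more commutators; and (iii) runs the telescoping induction of \cite{HarpeSkandalisII} Proposition 6.1, which uses $9$ (resp.\ $15$) commutators per step and $10$ (resp.\ $16$) in the limit, giving the totals $24 + 10 = 34$ and $30 + 16 = 46$.

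What must be reproved are the real rank zero analogs of Lemma \ref{lem:18CommutatorsApprox}, Lemma \ref{lem:20CommutatorsApprox} and Lemma \ref{lem:6CommutatorsApprox}. The substitutions are: (a) in place of the local approximation of a TAI algebra by $\INT$-subalgebras, one uses that in a unital simple real rank zero C*-algebra every unitary in $U^0(\A)$ (resp.\ every positive invertible) is a norm limit of unitaries (resp.\ positive invertibles) with finite spectrum (see \cite{LinRR0Unitaries}), so that the relevant building block is $\mathbb{C}$ rather than $C[0,1]$; (b) consequently Lemma \ref{lem:INTDiagonal} and Lemma \ref{lem:InvertibleDiagonal} are replaced by their constant-coefficient special cases, which follow at once from Corollary \ref{cor:MatrixCommutator}, \cite{ThomsenCommutators} Lemma 2.1 and \cite{ThomsenCommutators} Lemma 2.6, and use no more commutators than the $C[0,1]$ versions, so every count in the argument is preserved; (c) the uniqueness input Theorem \ref{thm:LinAUE} is replaced by its real rank zero counterpart — since a finite-spectrum unitary has finite spectrum, the relevant *-homomorphisms have finite-dimensional domain and the matching step reduces to comparing finite families of orthogonal projections, which is governed by strict comparison and cancellation of projections; (d) the Riesz interpolation property of $K_0(\A)$, strict comparison and cancellation replace the corresponding TAI facts in the $K_0$-bookkeeping, i.e.\ in the introduction of the correction projections $q', r'$ and the scalar term that force the eigenvalue data to have trace exactly zero on all of $T(\A)$. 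Finally, the telescoping argument requires that the unitaries produced by \cite{HarpeSkandalisII} Proposition 5.18 and Lemma \ref{lem:HSLemma5.17} lie in $U^0$, which is $K_1$-injectivity; this holds because $\A$ is simple with real rank zero and cancellation of projections, so that $\pi_0(U(\A)) \cong K_1(\A)$ and Remark \ref{rem:UnitariesConnectedTo1} applies verbatim.

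I expect the main obstacle to be the real rank zero form of Lemma \ref{lem:18CommutatorsApprox}: one must (1) conjugate the finite-spectrum cut-down $pxp$, up to a small error, to a scalar ``diagonal'' built from a prescribed finite subset of its spectrum with prescribed multiplicities (the role of the maps $\psi_j$ in the TAI proof), using the real rank zero uniqueness facts above; and (2) perform the $K_0$-adjustment — adjoining correction projections from $\M_\infty(\A)$ and subtracting a scalar term — so that the resulting self-adjoint element has vanishing trace on every $\tau \in T(\A)$, after which the constant-coefficient diagonal lemma applies. Since there are no interval summands, step (2) is a finite computation with projections controlled entirely by strict comparison, cancellation and Riesz interpolation, so the difficulty is careful bookkeeping rather than a new idea, but it is where all three hypotheses on $\A$ are genuinely used. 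Once these analogs are in place, the remainder of the argument is identical to the proof of Theorem \ref{thm:TAIFirstTh}, and the same bounds $34$ and $46$ result.
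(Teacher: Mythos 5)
Your approach is a genuinely different route from the paper's. The paper does not redo the TAI argument in the real rank zero setting; instead it invokes Lin's embedding theorem (\cite{LinRR0Embed}) to produce a unital simple AH-algebra $\C$ with bounded dimension growth and real rank zero (hence TAI) together with a unital $*$-embedding $\Phi : \C \hookrightarrow \A$ inducing an order isomorphism on $K_0$, an isomorphism on $K_1$, and an affine homeomorphism on tracial simplices. After identifying $\C$ with its image, a finite-spectrum self-adjoint $a$ with $\|u - e^{i2\pi a}\|$ small (from \cite{LinRR0Unitaries}) is conjugated into $\C$ by a single unitary $z$ of $\A$, using the $K_0$-isomorphism plus cancellation to match the spectral projections of $a$ with projections of $\C$. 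Since $T(\A) \cong T(\C)$, the determinant condition transfers, and one applies Theorem \ref{thm:TAIFirstTh} inside $\C$ to $z e^{i2\pi a} z^* = e^{i2\pi z a z^*}$, then conjugates the commutators back by $z^*$. None of Lemma \ref{lem:18CommutatorsApprox}, Lemma \ref{lem:20CommutatorsApprox}, or Lemma \ref{lem:6CommutatorsApprox} is reproved in the RR0 setting; the heavy lifting is delegated entirely to the TAI case already established and to \cite{LinRR0Embed}.

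By contrast, you propose to re-establish the whole machinery directly, replacing the $\INT$ local approximation by finite-spectrum approximants, the interval diagonal lemmas by their constant-coefficient versions, and Theorem \ref{thm:LinAUE} by a bare-hands matching of finite families of orthogonal projections using strict comparison and cancellation. The telescoping induction and its counts are then unchanged. This is a conceptually coherent alternative — indeed it is more self-contained, avoiding the embedding theorem as an input — and you correctly identify the RR0 replacement of Theorem \ref{thm:LinAUE} as the real work. I would push back a little on the claim that this replacement is ``careful bookkeeping rather than a new idea.'' The two $*$-homomorphisms $\phi_1, \phi_2 : C(X) \to \A$ still have $X$ an arc (so the domain is not literally finite-dimensional, only the images are), the finite spectral supports of $\phi_1(h)$ and $\phi_2(h)$ need not coincide, and the traces agree only approximately; producing an approximate conjugation under these hypotheses is a Weyl--von Neumann-type perturbation/matching lemma that needs its own proof (refine both families via Riesz interpolation, compare sub-projections using strict comparison, match with cancellation, control the spectral shift). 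None of this is automatic, and it is exactly the work the paper sidesteps by going through $\C$. So: correct route, but a noticeably longer one, and the one lemma you flag as ``careful bookkeeping'' is the place where, if you were to write this out, you would have to supply a genuinely new (if standard-flavored) RR0 uniqueness ingredient that the paper never proves.
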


\begin{proof}
The proof of this theorem is very similar to the proof of
Theorem \ref{thm:TAIFirstTh}.

Firstly, by \cite{LinRR0Embed}, there exists a unital simple AH-algebra
$\C$ with bounded dimension growth and real rank zero 
and a unital $*$-homomorphism
$\Phi : \C \rightarrow \A$  
such that 
$\Phi$ is an isomorphism at the level of the K-theory invariant.
I.e., we have the following:
\begin{enumerate}
\item[i.] The induced map 
$$K_*(\Phi) : (K_0(\C), K_0(\C)_+, K_1(\C), [1_{\C}]) 
\rightarrow (K_0(\A), K_0(\A)_+, K_1(\A),  [1_{\A}])$$
is an isomorphism
of ordered groups with unit. 
\item[ii.] The induced map $T(\Phi) : T(\A) \rightarrow T(\C)$ is
an affine homeomorphism.
\end{enumerate}
Replacing $\C$ with
$\Phi(\C)$ if necessary, we may assume that $\C$ is a 
unital C*-subalgebra of $\A$. We denote the above statements
by ``(+)".\\

The proof (both Parts (1) and (2))  
is exactly the same as the argument leading up to  
Theorem \ref{thm:TAIFirstTh}.  In particular, one needs prove
analogues to Lemma \ref{lem:18CommutatorsApprox}, 
Lemma \ref{lem:20CommutatorsApprox} and Lemma \ref{lem:6CommutatorsApprox}
as well as the argument of Theorem \ref{thm:TAIFirstTh} itself.
   
Here are the main additional ingredients:
\begin{enumerate}
\item[(a)] 
Since $\A$ has real rank zero, if $u \in U^0(\A)$,
then $u$ can be approximated
by unitaries with finite spectrum (\cite{LinRR0Unitaries}).  
More precisely, for every $\delta_2 > 0$,
there exists a self-adjoint element 
$a \in \A$, with finite spectrum, such that 
\begin{equation*}
\| u - e^{i 2 \pi a} \| < \delta_2.   
\end{equation*}
(E.g., 
the above statement replaces the statement (\ref{equ:ApplyLinExponential})
from Lemma \ref{lem:18CommutatorsApprox}.)  
Note also that by (+) statement i. (and since $\A$ has cancellation of
projections), 
there exists a unitary $z \in \A$ such that 
$z a z^* \in \C$ and hence, $z e^{i 2 \pi a} z^* = e^{ i 2 \pi z a z^*}
\in \C$.
We then work with $e^{i 2 \pi z a z^*}$ inside $\C$, which is
TAI.  
\item[(b)]  If $x \in GL^0(\A)$ is a \emph{positive} invertible,
then, since $\A$ has real rank zero, 
$x$ can be approximated arbitrarily close by positive invertibles
with finite spectrum.  Once more, by (+) statement i. (and since
$\A$ has cancellation), 
these positive invertibles
are unitarily equivalent to positive invertibles in $\C$, and
we work in $\C$, which is TAI. 
\end{enumerate}
\end{proof}

The reduction of commutators argument goes through with essentially
no change. 

\begin{thm} Let $\A$ be a unital separable simple C*-algebra
with real rank zero, strict comparison and cancellation of 
projections.

Let $x \in GL^0(\A)$ such that
$\Delta_T(x) = 0$.

Then 
there exist $x_j, y_j \in GL^0(\A)$, $1 \leq j \leq 8$,
such that
$$x = \prod_{j=1}^{8} (x_j, y_j).$$

If, in addition, $x$ is a unitary (in $U^0(\A)$) or a positive
invertible, then
there exist $x_j, y_j \in GL^0(\A)$ (not necessarily unitary or positive), 
$1 \leq j \leq 4$, 
such that
$$x = \prod_{j=1}^{4} (x_j, y_j).$$
\label{thm:MainRR0Th}
\end{thm}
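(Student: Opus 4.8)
The plan is to reproduce the proof of Theorem \ref{thm:MainTAITh} essentially word for word, the point being that all of its ingredients are already available here in the required generality. Indeed Theorem \ref{thm:RR0FirstTh} is the real-rank-zero analogue of Theorem \ref{thm:TAIFirstTh}, and Lemmas \ref{lem:LUDI}, \ref{lem:LUDII}, \ref{lem:LUDIII} and \ref{lem:Mar1,2012.6:26PM} were already stated and proved for a unital simple separable C*-algebra that is \emph{either} TAI \emph{or} of real rank zero with strict comparison and cancellation of projections. So no new machinery is needed; one simply runs the same argument with the real-rank-zero inputs in place of the TAI ones.

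First I would treat the case in which $x$ is a unitary in $U^0(\A)$ or a positive invertible. Here I would follow \cite{HarpeSkandalisII} Theorem 6.6 line by line, making exactly two substitutions: \cite{HarpeSkandalisII} Lemma 6.5 is replaced by Lemma \ref{lem:Mar1,2012.6:26PM}, and \cite{HarpeSkandalisII} Proposition 6.1 is replaced by Theorem \ref{thm:RR0FirstTh}. Thus Lemma \ref{lem:Mar1,2012.6:26PM} first peels off two commutators, reducing matters to an element supported (modulo $1$) in a single corner $q_1 \A q_1$, where $q_1$ is one of $46$ pairwise orthogonal, mutually equivalent projections; that corner is again a unital simple separable C*-algebra with real rank zero, strict comparison and cancellation, so Theorem \ref{thm:RR0FirstTh} expresses the remaining piece as a finite product of commutators inside it, and the classical absorption/shift argument across the $46$ equivalent corners (as on the final page of \cite{HarpeSkandalisII}) collapses that product into two further commutators of $GL^0(\A)$. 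The net result is $x = \prod_{j=1}^4 (x_j, y_j)$ with $x_j, y_j \in GL^0(\A)$ (not necessarily unitary or positive).

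For a general $x \in GL^0(\A)$ with $\Delta_T(x) = 0$, I would pass to the polar decomposition $x = u|x|$. By Lemma \ref{lem:DeterminantPolarDecomposition} this gives $\Delta_T(u) = \Delta_T(|x|) = 0$, with $u \in U^0(\A)$ and $|x|$ a positive invertible; applying the previous paragraph to each of $u$ and $|x|$ writes each as a product of $4$ commutators in $GL^0(\A)$, and hence $x$ is a product of $8$. The reverse implication, that a product of commutators lies in the kernel of $\Delta_T$, is automatic, since $\Delta_T$ is a homomorphism into an abelian group.

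The genuine difficulty is not in this theorem but in the results it quotes: one must know that the block ``LUDI'' decomposition and the inductive absorption behind Lemmas \ref{lem:LUDII}, \ref{lem:LUDIII} and \ref{lem:Mar1,2012.6:26PM}, together with the whole argument behind Theorem \ref{thm:RR0FirstTh}, survive the weakening of ``TAI'' to ``real rank zero, strict comparison and cancellation''. This is handled --- as recorded in the proofs of those statements --- by the facts that in such an algebra every unitary in $U^0(\A)$ and every positive invertible is a norm limit of elements with finite spectrum (\cite{LinRR0Unitaries}), that such an algebra has strict comparison and the Riesz interpolation property, and that by \cite{LinRR0Embed} it contains a unital simple AH-subalgebra with the same ordered $K$-theory and tracial data, into which finite-spectrum exponentials may be transported and inside which one is then working in a TAI algebra. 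Granting all this, the theorem follows with no further computation.
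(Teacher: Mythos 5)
Your proposal matches the paper's own proof essentially word for word: reduce to the unitary and positive invertible cases via polar decomposition and Lemma \ref{lem:DeterminantPolarDecomposition}, and handle those cases by running the argument of Theorem \ref{thm:MainTAITh} (i.e.\ of \cite{HarpeSkandalisII} Theorem 6.6) with Lemma \ref{lem:Mar1,2012.6:26PM} in place of their Lemma 6.5 and Theorem \ref{thm:RR0FirstTh} in place of Proposition 6.1, observing that Lemmas \ref{lem:LUDI}--\ref{lem:Mar1,2012.6:26PM} were already stated and proved to cover the real rank zero case. Nothing to add.
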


\begin{proof}
  The proof is exactly the same as the
proof of Theorem \ref{thm:MainTAITh}, except
that Theorem \ref{thm:TAIFirstTh} is replaced 
with Theorem \ref{thm:RR0FirstTh}. Note that  
all the preliminary lemmas leading up to 
Theorem \ref{thm:MainTAITh} include the real rank zero case.
\end{proof}

 \end{document}